\renewcommand{\tocsection}[3]{%
  \indentlabel{\@ifnotempty{#2}{\bfseries\ignorespaces#1 #2\quad}}\bfseries#3}
\renewcommand{\tocsubsection}[3]{%
  \indentlabel{\@ifnotempty{#2}{\ignorespaces#1 #2\quad}}#3}
\newcommand\@dotsep{4.5}
\def\@tocline#1#2#3#4#5#6#7{\relax
  \ifnum #1>\c@tocdepth 
  \else
    \par \addpenalty\@secpenalty\addvspace{#2}%
    \begingroup \hyphenpenalty\@M
    \@ifempty{#4}{%
      \@tempdima\csname r@tocindent\number#1\endcsname\relax
    }{%
      \@tempdima#4\relax
    }%
    \parindent\z@ \leftskip#3\relax \advance\leftskip\@tempdima\relax
    \rightskip\@pnumwidth plus1em \parfillskip-\@pnumwidth
    #5\leavevmode\hskip-\@tempdima{#6}\nobreak
    \leaders\hbox{$\m@th\mkern \@dotsep mu\hbox{.}\mkern \@dotsep mu$}\hfill
    \nobreak
    \hbox to\@pnumwidth{\@tocpagenum{\ifnum#1=1\bfseries\fi#7}}\par
    \nobreak
    \endgroup
  \fi}
\renewcommand\csname r@tocindent0\endcsname{0pt}
\def\l@subsection{\@tocline{2}{0pt}{2.5pc}{5pc}{}}
\newcommand{\R}{\mathbb{R}}
\newcommand{\N}{\mathbb{N}}
\newcommand{\bars}{\overline s}
\newcommand{\X}{\bm{X}}
\newcommand{\be}{\bm{e}}
\newcommand{\bv}{\bm{v}}
\newcommand{\p}{\partial}
\newcommand{\abs}[1]{\left\lvert #1 \right\rvert}
\newcommand{\norm}[1]{\left\lVert #1 \right\rVert}
\newcommand{\wh}[1]{\widehat{#1}}
\newcommand{\wt}[1]{\widetilde{#1}}
\newcommand{\mc}[1]{\mathcal{#1}}
\newtheorem{theorem}{Theorem}[section]
\newtheorem{lemma}[theorem]{Lemma}
\theoremstyle{definition}
\begin{document}
 \title[Viscoelastic resistive force theory]{Well-posedness of a viscoelastic resistive force theory and applications to swimming}

 \author{Laurel Ohm}
\address[L. Ohm]{Department of Mathematics, University of Wisconsin - Madison, Madison, WI 53706}
\email{lohm2@wisc.edu}

\begin{abstract} 
We propose and analyze a simple model for the evolution of an immersed, inextensible filament which incorporates linear viscoelastic effects of the surrounding fluid. The model is a closed-form system of equations along the curve only which includes a `memory' term due to viscoelasticity.
For a planar filament, given a forcing in the form of a preferred curvature, we prove well-posedness of the fiber evolution as well as the existence of a unique time-periodic solution in the case of time-periodic forcing. Moreover, we obtain an expression for the swimming speed of the filament in terms of the preferred curvature. The swimming speed depends in a complicated way on the viscoelastic parameters corresponding to the fluid relaxation time and additional polymeric viscosity. We study this expression in detail, accompanied by numerical simulations, and show that this simple model can capture complex effects of viscoelasticity on swimming. In particular, the viscoelastic swimmer is shown to be faster than its Newtonian counterpart in some situations and slower in others. Strikingly, we even find an example where viscoelastic effects may lead to a reversal in swimming direction from the Newtonian setting, although this occurs when the displacement for both the Newtonian and viscoelastic swimmers is practically negligible. 
\end{abstract}

\maketitle

\tableofcontents

\section{Introduction}
The effect of fluid viscoelasticity on swimming microorganisms is a subject of great interest in the biofluids community. 
A major focus of studies on swimming in viscoelastic media is on the complexity of the impacts that viscoelasticity can have on swimming speeds, depending on the situation. 
In experimental settings and in computational models, viscoelastic effects have been shown to hinder swimming \cite{shen2011undulatory},
enhance swimming \cite{spagnolie2013locomotion, keim2012fluid, espinosa2013fluid, riley2014enhanced,riley2015small}, or both hinder and enhance swimming depending on factors such as the kinematics of the swimmer \cite{godinez2015complex, elfring2016effect, angeles2021front,thomases2014mechanisms,li2021microswimming}. Many studies emphasize the non-monotonic dependence of swimming speed on parameters relating to fluid viscoelasticity \cite{martinez2014flagellated,teran2010viscoelastic,liu2011force,thomases2017role,salazar2016numerical}.
Much of this prior work is either experimental, e.g. \cite{shen2011undulatory}, or based on computational models which couple an equation for curve evolution with bulk viscoelastic fluid equations (such as Oldroyd-B) via, e.g. the immersed boundary method \cite{li2017flagellar,li2019orientation,thomases2014mechanisms,thomases2017role}.\\

Here we present a simple model for the evolution of an immersed, inextensible curve which incorporates linear viscoelastic effects of the surrounding fluid in a closed-form system of equations along the curve only. 
The model is derived from the linear viscoelastic resistive force theory described in \cite{fu2008beating,thomases2017role}, but requires some additional interpretation to yield a well-posed curve evolution. The resulting fiber evolution equations look like classical resistive force theory plus Euler beam theory \cite{gray1955propulsion, camalet2000generic,hines1978bend,tornberg2004simulating}, but incorporate the evolution of an additional variable corresponding to a memory of the fiber curvature at previous times. This system of PDEs satisfies a very natural energy identity: in the absence of forcing, the bending energy of the filament plus an energy corresponding to the memory term are non-increasing in time. We prescribe a time-periodic forcing along the filament in the form of a preferred curvature and consider swimming as an emergent property. We show that this simple model, which is not coupled to any equations in the bulk, can capture the complexity of viscoelastic effects on swimming, including slowdowns and speedups relative to a Newtonian swimmer, depending on the form of the preferred curvature and the size of two viscoelastic parameters. \\

Although the viscoelastic effect is linear, the system of PDEs we obtain is nonlinear due to the fiber inextensibility constraint. For a planar filament, we prove well-posedness of the fiber evolution problem, including global existence and uniqueness for small data and local existence and uniqueness for large data. The model inherits many of the features of the Newtonian problem, studied in detail from a PDE perspective in our previous work \cite{RFTpaper}, but is fundamentally different in that the analysis now includes an additional ODE for the memory variable. As in \cite{RFTpaper}, we prove that given a (small) time-periodic forcing along the fiber in the form of a preferred curvature, there exists a unique periodic solution to the filament evolution equations. Furthermore, we show that the periodic solution to the viscoelastic PDE converges to the unique periodic solution in the Newtonian setting as a parameter corresponding to the fluid relaxation time goes to zero. These analysis questions are interesting in their own right and continue to develop the PDE theory of the hydrodynamics of slender filaments initiated in \cite{closed_loop,free_ends,rigid,inverse,regularized}. \\

Finally, we calculate an expression for the fiber swimming speed in terms of the prescribed preferred curvature. The expression depends in a complicated way on the viscoelastic parameters corresponding to the fluid relaxation time and the additional (polymer) viscosity of the fluid. Nevertheless, we are able to make a few predictions about the swimming speed, which we test via numerical simulations. The numerical method that we use is a natural extension of the method we proposed in \cite{RFTpaper}, which is based on a combination of the methods used in \cite{moreau2018asymptotic,maxian2021integral}. We show that varying two viscoelastic parameters corresponding to the fluid relaxation time and the additional polymeric stress of the fluid can have complex effects on the fiber swimming speed, including both speedups and slowdowns relative to the Newtonian setting. In addition, we numerically find a scenario in which viscoelastic effects may cause the swimmer to reverse direction, although the displacement for both the Newtonian and viscoelastic swimmers in this case is practically negligible. Our results are for a small set of parameter values and two choices of time-periodic preferred curvature, meaning that much remains to be explored. \\

We note that prior asymptotic calculations have indicated that linear viscoelasticity does not affect the swimming speed of a filament to leading order in small amplitude deformations, and nonlinear viscoelastic effects are needed to see changes in the swimming speed from a Newtonian swimmer \cite{fu2007theory,fu2009swimming,fulford1998swimming,lauga2007propulsion}. These results rely on an expression for the swimming speed in terms of the actual fiber shape rather than a given forcing. Here we show that prescribing the same preferred curvature along the filament results in differences in the emerging shape and hence in the swimming speed. We also note that our analysis accounts for effects of boundary conditions on finite fibers and applies to fibers with small curvatures rather than small amplitude deformations. \\

We further note that the numerical results reported here are qualitative rather than quantitative in the sense that the scaling of the model as presented here is not physically realistic. In particular, our choice of timescale for the fiber evolution removes the dependence of the dynamics on the swimmer slenderness and its bending stiffness. This makes the analysis and comparison to the Newtonian setting in \cite{RFTpaper} more convenient, but makes direct comparison with experiments or biological models less convenient. These physical considerations are expected to play a role: indeed, the important role of fiber flexibility on swimming has been emphasized in, e.g., \cite{salazar2016numerical,thomases2017role}. 
From an applications perspective, our main aim is to show that we can indeed get complex swimming behaviors from the simple model presented here. 
A more physical rescaling of this model may even amplify these differences.


\subsection{The model}
Let $\X:[0,1]\times[0,T]\to \R^3$ denote the centerline of an inextensible elastic filament. Throughout, we will used the notation $I=[0,1]$ to denote the unit interval, $s\in I$ to denote the arclength parameter along $\X$, and subscript $(\cdot)_s$ to denote $\frac{\p}{\p s}$. At each $s\in I$, the unit tangent vector to $\X$ is given by $\be_{\rm t}(s,t)=\X_s/\abs{\X_s}=\X_s$ due to inextensibility. Here we will consider a fiber undergoing planar deformations only; in particular, at each point $s\in I$ we may define an in-plane unit normal vector $\be_{\rm n}\perp \be_{\rm t}$ to the filament.\\

The motion of the filament is driven by a prescribed active forcing in the form of a preferred curvature $\kappa_0(s,t)$ along the fiber. Given $\kappa_0$, the filament evolves according to 
\begin{align}
\frac{\p\X}{\p t} &= -(1+\mu)({\bf I}+\gamma\X_s\X_s^{\rm T})\big(\X_{sss}-\tau\X_s-(\kappa_0)_s\be_{\rm n}- \frac{\mu}{1+\mu}\xi_s\be_{\rm n}\big)_s  \label{eq:VEoriginal1}\\
\delta\frac{\p \xi}{\p t} &= -\xi + \kappa-\kappa_0  \label{eq:VEoriginal2}\\
\abs{\X_s}^2 &= 1\,,  \label{eq:VEoriginal3}
\end{align}
with boundary conditions 
\begin{equation}\label{eq:BCs_original}
(\X_{ss}-\kappa_0\be_{\rm n})\big|_{s=0,1} =0\,, \quad (\X_{sss}-\tau\X_s-(\kappa_0)_s\be_{\rm n})\big|_{s=0,1} =0\,, \quad \xi\big|_{s=0,1}=\xi_s\big|_{s=0,1}=0\,.
\end{equation}

Here the matrix $({\bf I}+\gamma\X_s\X_s^{\rm T})$ in \eqref{eq:VEoriginal1} is the resistive force theory approximation relating the hydrodynamic force along a slender filament in a Stokes (Newtonian) fluid to its velocity \cite{gray1955propulsion, pironneau1974optimal,lauga2020fluid}. The parameter $\gamma$ is a shape factor which depends on the aspect ratio of the filament; for a very slender filament, $\gamma\approx 1$.\\

The first three components $\big(\X_{sss}-\tau\X_s-(\kappa_0)_s\be_{\rm n}\big)_s$ of the forcing term in \eqref{eq:VEoriginal1} are identical to the Newtonian setting. The term $\X_{ssss}$ is the elastic response of the filament to deformations and may be rewritten as $(\kappa_s\be_{\rm n}-\kappa^2\be_{\rm t})_s$, where $\kappa(s,t)$ is the filament curvature. The function $\tau(s,t)$ plays the role of the (unknown) filament tension and enforces the inextensibility constraint \eqref{eq:VEoriginal3}. As noted, $\kappa_0$ is the prescribed preferred curvature of the filament and serves as an active forcing along the fiber.\\

The leading order effects of the filament response to a viscoelastic fluid are encoded in the variable $\xi$. As seen from equation \eqref{eq:VEoriginal1}, $\xi$ modifies the evolution of $\X$ in the same way as the preferred curvature $\kappa_0$, and, from equation \eqref{eq:VEoriginal2}, may be interpreted as the `memory' of the curvature difference $\kappa-\kappa_0$ at previous times. 
The parameters $\delta\ge0$ and $\mu\ge 0$ are associated with viscoelasticity: $\delta$ is the additional relaxation time of the filament due to viscoelastic  effects of the fluid, and $\mu$ relates to the additional (polymer) viscosity of the viscoelastic medium. 
Note that if $\mu=0$ or if $\delta=0$, we recover the classical Newtonian formulation (see \cite{camalet2000generic,hines1978bend, wiggins1998flexive, wiggins1998trapping,tornberg2004simulating,RFTpaper})
\begin{equation}\label{eq:newt}
\frac{\p\X}{\p t} = -({\bf I}+\gamma\X_s\X_s^{\rm T})\big(\X_{sss}-\tau\X_s-(\kappa_0)_s\be_{\rm n}\big)_s
\end{equation}
(up to a rescaling of the unknown tension in the $\delta=0$ case). However, showing convergence of solutions of \eqref{eq:VEoriginal1}-\eqref{eq:VEoriginal2} to solutions of \eqref{eq:newt} as $\delta\to 0$ is more subtle than simply verifying that $\delta=0$ yields the Newtonian formulation, since $\delta>0$ is a singular perturbation. We address the $\delta\to 0$ limit in the time-periodic setting in Theorem \ref{thm:per_deb}.  \\

The model \eqref{eq:VEoriginal1}-\eqref{eq:BCs_original} is inspired by the following (perhaps more familiar) framing of linear viscoelastic effects on filament evolution. We consider
\begin{align}
\frac{\p\X}{\p t} &= -({\bf I}+\gamma\X_s\X_s^{\rm T})(\bm{\sigma}^{\rm vis})_s \label{eq:RFT} \\
(1+\mu)\delta\frac{\p\bm{\sigma}^{\rm ve}}{\p t} + \bm{\sigma}^{\rm ve} &= \delta\frac{\p\bm{\sigma}^{\rm vis}}{\p t} + \bm{\sigma}^{\rm vis}  \label{eq:oldroyd}\\ 
(\bm{\sigma}^{\rm ve})_s &= \big(\X_{sss}-\tau\X_s -(\kappa_0)_s\be_{\rm n}\big)_s  \label{eq:balance}\\
\abs{\X_s}^2&=1  \label{eq:inext}\,.
\end{align}

Here $\bm{\sigma}^{\rm ve}(s,t)$ and $\bm{\sigma}^{\rm vis}(s,t)$ are both vectors along the filament $\X$. Equation \eqref{eq:RFT} relates the filament velocity $\frac{\p\X}{\p t}$ to the viscous drag $\bm{f}^{\rm vis}=(\bm{\sigma}^{\rm vis})_s$ along the fiber via resistive force theory. Equation \eqref{eq:oldroyd} has the form of a linearized Oldroyd-B model \cite{fu2008beating,thomases2017role}, which relates the viscoelastic stresses in the fluid to the viscous strain rate. 
Equation \eqref{eq:oldroyd} is restricted to the filament only; $\bm{\sigma}^{\rm ve}(s,t)$ and $\bm{\sigma}^{\rm vis}(s,t)$ are both vectors along the fiber. When $s$ and $t$ derivatives commute (i.e. for a straight filament), equation \eqref{eq:oldroyd} agrees with the linear viscoelastic resistive force theory derived in \cite{fu2008beating,thomases2017role} in terms of $\bm{f}^{\rm ve}$ and $\bm{f}^{\rm vis}$ instead. 
Here again $\delta$ is the fluid relaxation time and $1+\mu$ is the total viscosity of the medium. Note that rescaling $\bm{\sigma}^{\rm vis}$ by $\frac{1}{1+\mu}$ and $\delta$ by $1+\mu$, we may rewrite \eqref{eq:oldroyd} in the (perhaps more usual) form
$\delta\frac{\p\bm{\sigma}^{\rm ve}}{\p t} + \bm{\sigma}^{\rm ve} = \delta\frac{\p\bm{\sigma}^{\rm vis}}{\p t} + (1+\mu)\bm{\sigma}^{\rm vis}$, but we will use the form \eqref{eq:oldroyd} for analysis.  \\

Equation \eqref{eq:balance} is the force balance between the viscoelastic forces in the fluid $\bm{f}^{\rm ve}=(\bm{\sigma}^{\rm ve})_s$ and the elastic forces $\big(\X_{sss}-\tau\X_s -(\kappa_0)_s\be_{\rm n}\big)_s$ along the rod, which are subject to the inextensibility constraint \eqref{eq:inext}. See \cite{camalet2000generic,thomases2017role,RFTpaper} for a variational derivation of the elastic forces along the fiber; note that the boundary conditions \eqref{eq:BCs_original} come from this variational derivation. \\
 
Let $\sigma^{\rm vis}_{\rm n}=\bm{\sigma}^{\rm vis}\cdot\be_{\rm n}$, $\sigma^{\rm vis}_{\rm t}=\bm{\sigma}^{\rm vis}\cdot\be_{\rm t}$, and likewise for $\sigma^{\rm ve}_{\rm n}$, $\sigma^{\rm ve}_{\rm t}$. 
Now, because the filament is inextensible, the stresses in the tangential direction along the fiber are unknown and are grouped into the filament tension (see \eqref{eq:balance}). Up to a redefinition of the (unknown) filament tension, we will use the same form of stress in the tangential direction as in the Newtonian setting, i.e. $\sigma^{\rm vis}_{\rm t}=\tau\X_s$ for some unknown function $\tau$.
We will thus consider equation \eqref{eq:oldroyd} as an equation holding along the normal direction of the fiber only:
\begin{align*}
(1+\mu)\delta\frac{\p\sigma^{\rm ve}_{\rm n}}{\p t} + \sigma^{\rm ve}_{\rm n} &= \delta\frac{\p\sigma^{\rm vis}_{\rm n}}{\p t} + \sigma^{\rm vis}_{\rm n}\,.
\end{align*}
We can then solve for $\sigma^{\rm vis}_{\rm n}$ in terms of $\sigma^{\rm ve}_{\rm n}$:
\begin{align*}
\sigma^{\rm vis}_{\rm n} &= (1+\mu)\sigma^{\rm ve}_{\rm n} - \mu\delta^{-1}\int_0^t e^{-(t-t')/\delta}\sigma^{\rm ve}_{\rm n}\big|_{t=t'}\, dt' + e^{-t/\delta}\big(\sigma^{\rm vis,in}_{\rm n}-(1+\mu)\sigma^{\rm ve,in}_{\rm n}\big) \\
&= (1+\mu)\sigma^{\rm ve}_{\rm n} - \mu\delta^{-1}\int_0^t e^{-(t-t')/\delta}(\kappa-\kappa_0)_s\, dt' + e^{-t/\delta}\big(\sigma^{\rm vis,in}_{\rm n}-(1+\mu)\sigma^{\rm ve,in}_{\rm n}\big)\,,
\end{align*}
where $\sigma^{\rm vis,in}_{\rm n}=\sigma^{\rm vis}_{\rm n}\big|_{t=0}$, $\sigma^{\rm ve,in}_{\rm n}=\sigma^{\rm ve}_{\rm n}\big|_{t=0}$.
Taking $\xi=\delta^{-1}\int_0^t e^{-(t-t')/\delta}(\kappa-\kappa_0)\, dt' + \mu^{-1}e^{-t/\delta}\int_0^s\big(\sigma^{\rm vis,in}_{\rm n}-(1+\mu)\sigma^{\rm ve,in}_{\rm n}\big)ds'$, we have that $\sigma^{\rm vis}_{\rm n}= (1+\mu)\sigma^{\rm ve}_{\rm n} - \mu\xi_s$ and $\delta\dot\xi=-\xi+\kappa-\kappa_0$, yielding the system \eqref{eq:VEoriginal1}-\eqref{eq:BCs_original}.\\

The model \eqref{eq:VEoriginal1}-\eqref{eq:BCs_original} has advantages over other potential ways of incorporating linear viscoelastic effects of the surrounding fluid due to both its simplicity and because it has an associated energy. In particular, taking $\kappa_0=0$ (no internal forcing), equations \eqref{eq:VEoriginal1} and \eqref{eq:VEoriginal2} reduce to
\begin{align*}
\frac{\p\X}{\p t} &= -(1+\mu)({\bf I}+\gamma\X_s\X_s^{\rm T})\big(\X_{sss}-\tau\X_s- \frac{\mu}{1+\mu}\xi_s\be_{\rm n}\big)_s \\
\delta\frac{\p \xi}{\p t} &= -\xi + \kappa\,.
\end{align*}
Multiplying both sides of the first equation by $(\X_{sss}-\tau\X_s- \frac{\mu}{1+\mu}\xi_s\be_{\rm n})_s$ and integrating in $s$, on the right hand side we obtain the negative quantity 
\begin{align*}
-R^2(t)&:=-(1+\mu)\int_0^1\bigg(\big|\big(\X_{sss}-\tau\X_s- \frac{\mu}{1+\mu}\xi_s\be_{\rm n}\big)_s\big|^2 +\gamma \bigg(\X_s\cdot\big(\X_{sss}-\tau\X_s- \frac{\mu}{1+\mu}\xi_s\be_{\rm n}\big)_s\bigg)^2\bigg)\,ds.
\end{align*}

On the left hand side, using that $\X_s=\be_{\rm t}$ and $\X_{ss}=\kappa\be_{\rm n}$, we have
\begin{align*}
\int_0^1 \frac{\p\X}{\p t}\cdot(\X_{sss}-\tau\X_s- \frac{\mu}{1+\mu}\xi_s\be_{\rm n})_s \, ds &= -\int_0^1\frac{\p\X_s}{\p t}\cdot\big(\X_{sss}-\tau\X_s- \frac{\mu}{1+\mu}\xi_s\be_{\rm n}\big) \, ds \\
&= \int_0^1\dot\kappa\big(\kappa- \frac{\mu}{1+\mu}\xi\big) \, ds\,,
\end{align*}
where we use $\dot\kappa$ to denote $\frac{\p\kappa}{\p t}$. Now, $\dot\kappa \xi$ may be rewritten as
\begin{align*}
\dot\kappa \xi &= \p_t(\kappa \xi)-\kappa\dot\xi = \p_t(\kappa\xi)-\dot\xi(\xi+\delta\dot \xi) = \p_t(\kappa\xi)-\frac{1}{2}\p_t(\xi^2) -\delta\dot\xi^2\,,
\end{align*}
so the left hand side becomes
\begin{align*}
\int_0^1\dot\kappa\big(\kappa- \frac{\mu}{1+\mu}\xi\big) \, ds &= \int_0^1\bigg(\frac{1}{2}\p_t(\kappa^2)- \frac{\mu}{1+\mu}\big(\p_t(\kappa \xi)-\frac{1}{2}\p_t(\xi^2) -\delta \dot \xi^2 \big)\bigg) \, ds\,.
\end{align*}

We thus obtain the energy equality
\begin{equation}\label{eq:energy}
\frac{1}{2}\p_t \int_0^1\bigg(\kappa^2+\mu(\kappa-\xi)^2 \bigg) \, ds  
= -\delta\mu\int_0^1\dot \xi^2\,ds - (1+\mu)R^2(t) \,;
\end{equation}
in particular, $\norm{\kappa}_{L^2}^2+\mu\norm{\kappa-\xi}_{L^2}^2$ is a monotone quantity. Notice that $\kappa-\xi=\delta\dot\xi$, so this quantity may be rewritten as $\norm{\kappa}_{L^2}^2+\mu\delta\|\dot\xi\|_{L^2}^2$.

\subsection{Analytical setup}
Rather than working directly with the formulation \eqref{eq:VEoriginal1}-\eqref{eq:BCs_original}, we will use the inextensibility of the filament and the planarity of its deformation to write the tangent vector along the filament as
\begin{equation}\label{eq:tangent}
\X_s = \be_{\rm t}= \begin{pmatrix}
\cos\theta\\
\sin\theta
\end{pmatrix} \,, 
\end{equation}
where $\theta(s,t)$ is the angle between $\be_{\rm t}(s,t)$ and $\be_{\rm t}(0,0)$. Differentiating \eqref{eq:VEoriginal1} in $s$, we may then obtain a system of three equations: two evolution equations for $\theta$ and $\xi$, and one elliptic equation for the tension $\tau$, given by 
\begin{equation}\label{eq:theta_eqn}
\begin{aligned}
\dot\theta &=(1+\mu)\bigg(-\theta_{ssss}+(2+\gamma)(\theta_s^3)_s +(2+\gamma)\tau_s\theta_s+\tau\theta_{ss} + (\kappa_0)_{sss}\\
&\qquad -(1+\gamma)\theta_s^2(\kappa_0)_s \bigg) + \mu\bigg( \xi_{sss}-(1+\gamma)\theta_s^2\xi_s\bigg)   \\
\delta\dot\xi &= -\xi+\theta_s-\kappa_0\\
(1+\gamma)\tau_{ss} &=(\theta_s)^2\tau+ (\theta_s)^4+\theta_{ss}^2-(4+3\gamma)(\theta_{ss}\theta_s)_s + (2+\gamma)(\kappa_0)_{ss}\theta_s +(1+\gamma)\theta_{ss}(\kappa_0)_s \\
&\qquad +\frac{\mu}{1+\mu}\bigg( (2+\gamma)(\theta_s\xi_s)_s - \theta_{ss}\xi_s \bigg) \\
 (\theta_s-\kappa_0)\big|_{s=0,1}&=0\,,\quad (\theta_{ss}-(\kappa_0)_s)\big|_{s=0,1}=0\,, \quad (\tau+\kappa_0^2)\big|_{s=0,1}=0\, \quad \xi\big|_{s=0,1}=\xi_s\big|_{s=0,1}=0\,.
\end{aligned}
\end{equation}
However, it will be more useful to consider the filament evolution in term of the fiber curvature $\kappa=\theta_s$ rather than $\theta$, as is done in \cite{goldstein1995nonlinear,thomases2017role,RFTpaper}. Furthermore, due to the boundary conditions in \eqref{eq:theta_eqn}, it will be most useful to recast the system \eqref{eq:theta_eqn} in terms of $\overline\kappa=\kappa-\kappa_0$ and $\overline\tau=\tau+\kappa_0^2$. The equations \eqref{eq:theta_eqn} may be written as 
\begin{align}
\dot{\overline\kappa} &= -(1+\mu)\overline\kappa_{ssss} +\mu\xi_{ssss}-\dot\kappa_0 + (1+\mu)\big(\mc{N}[\overline\kappa,\kappa_0] \big)_s  - \mu(1+\gamma)\big((\overline\kappa+\kappa_0)^2\xi_s \big)_s \label{eq:kappadot}\\
\delta\dot\xi &= \overline\kappa-\xi \label{eq:xidot}\\
(1+\gamma)\overline\tau_{ss} &=(\overline\kappa+\kappa_0)^2\overline\tau+\mc{T}[\overline\kappa,\kappa_0] +\frac{\mu}{1+\mu}\bigg( (2+\gamma)((\overline\kappa+\kappa_0)\xi_s)_s - (\overline\kappa+\kappa_0)_s\xi_s \bigg) \label{eq:taueq}\\
\overline\kappa\big|_{s=0,1} &=\overline\kappa_s\big|_{s=0,1}=0\,, \quad \xi\big|_{s=0,1}=\xi_s\big|_{s=0,1} =0\,, \quad \overline\tau\big|_{s=0,1} =0\,. \label{eq:BCs}
\end{align}
This may be compared with the curvature formulation in the Newtonian case \cite{RFTpaper}, where the evolution is given by
\begin{equation}\label{eq:NewtonianKappa}
\begin{aligned}
\dot{\overline\kappa}^{\rm nw} &= -\overline\kappa^{\rm nw}_{ssss} -\dot\kappa_0 + \big(\mc{N}[\overline\kappa^{\rm nw},\kappa_0] \big)_s  \\
(1+\gamma)\overline\tau^{\rm nw}_{ss} &=(\overline\kappa^{\rm nw}+\kappa_0)^2\overline\tau^{\rm nw}+\mc{T}[\overline\kappa^{\rm nw},\kappa_0] \,.
\end{aligned}
\end{equation}

Here we use the superscript $(\cdot)^{\rm nw}$ to distinguish the solution to the Newtonian PDE \eqref{eq:NewtonianKappa} from the viscoelastic $\overline\kappa$.
The nonlinear terms $\mc{N}$ and $\mc{T}$ have the same form in both the viscoelastic and Newtonian cases and are given by
\begin{equation}\label{eq:NandT}
\begin{aligned}
\mc{N}[\overline\kappa,\kappa_0] &:= 3(2+\gamma)\overline\kappa(\overline\kappa+2\kappa_0)\overline\kappa_s+(5+3\gamma)\kappa_0^2\overline\kappa_s+ (5+2\gamma)\overline\kappa^2(\kappa_0)_s \nonumber\\
&\qquad +2(3+\gamma)\overline\kappa\kappa_0(\kappa_0)_s +(2+\gamma)\overline\tau_s(\overline\kappa+\kappa_0)+\overline\tau(\overline\kappa+\kappa_0)_s \\
\mc{T}[\overline\kappa,\kappa_0]&:= \overline\kappa(\overline\kappa+\kappa_0)^2(\overline\kappa+2\kappa_0) + (\overline\kappa+\kappa_0)_s\overline\kappa_s \nonumber\\
&\qquad-(1+\gamma)\big(\overline\kappa(\overline\kappa+2\kappa_0)\big)_{ss} -(2+\gamma)\big(\overline\kappa_s(\overline\kappa+\kappa_0)\big)_s\,.
\end{aligned}
\end{equation}
Note that $\overline\tau$ appears in $\mc{N}$, but since $\overline\tau=\overline\tau(\overline\kappa,\kappa_0)$, we will not denote this $\overline\tau$ dependence in our notation. The formulation \eqref{eq:kappadot}-\eqref{eq:BCs} will serve as the basis for our analysis. \\

Given $\theta\big|_{t=0}$ and $(\overline\kappa,\xi)$ solving \eqref{eq:NandT}, we may recover $\theta(s,t)$ via
\begin{equation}\label{eq:theta_recover}
\dot\theta = -(1+\mu)\overline\kappa_{sss}-\dot\kappa_0 + (1+\mu)\mc{N}[\overline\kappa,\kappa_0] +\mu\xi_{sss}- \mu(1+\gamma)(\overline\kappa+\kappa_0)^2\xi_s\,, 
\end{equation}
and the evolution of the fiber frame $(\be_{\rm t},\be_{\rm n})$ via 
\begin{equation}\label{eq:frame_ev}
\dot\be_{\rm t}(s,t) = \dot\theta(s,t) \be_{\rm n}(s,t)\,, \qquad \dot\be_{\rm n}(s,t)=-\dot\theta(s,t) \be_{\rm t}(s,t)\,.
\end{equation}
Using \eqref{eq:frame_ev}, we may then obtain the full fiber evolution by 
\begin{equation}\label{eq:fiber_move}
\begin{aligned}
\frac{\p\X}{\p t}(s,t) &= \bigg(-(1+\mu)\overline\kappa_{ss}+\mu\xi_{ss}+(1+\mu)(\overline\kappa+\kappa_0)(\overline\tau-\overline\kappa(\overline\kappa+2\kappa_0)) \bigg)\be_{\rm n} \\
&\qquad +(1+\gamma)\bigg((\overline\kappa+\kappa_0)\big((1+\mu)\overline\kappa_s-\mu\xi_s \big)+(1+\mu)\big(\overline\tau_s-(\overline\kappa(\overline\kappa+2\kappa_0))_s \big)\bigg)\be_{\rm t}\,.
\end{aligned}
\end{equation}

To analyze the system \eqref{eq:kappadot}-\eqref{eq:BCs}, as in \cite{RFTpaper}, we will begin by defining the linear operator $\mc{L}$ by
\begin{equation}\label{eq:L_op}
 \mc{L}[\psi]:= \p_{ssss}\psi\,, \qquad \psi(0)=\psi(1)=0\,, \; \psi_s(0)=\psi_s(1)=0\,.
\end{equation}
From \cite{landau1986theory, wiggins1998flexive, wiggins1998trapping},
we have that the eigenfunctions $\psi_k$ and eigenvalues $\lambda_k$ of the operator $\mc{L}$ \eqref{eq:L_op} are given by 
\begin{equation}\label{eq:L_eigs}
\begin{aligned}
\psi_k(s) &= \frac{\wh\psi_k(s)}{\|\wh\psi_k\|_{L^2}}\,, \qquad \lambda_k = \alpha_k^4\,, \qquad k=1,2,\dots\\
&\text{where } \cos(\alpha_k)\cosh(\alpha_k)=1\,, \quad \alpha_0=0 \\
&\text{and } \wh\psi_k(s) = \left(\cos(\alpha_k)-\cosh(\alpha_k)\right) \left(\cos(\alpha_k s)- \cosh(\alpha_k s) \right) \\
&\hspace{2cm} + \left(\sin(\alpha_k) +\sinh(\alpha_k)\right) \left(\sin(\alpha_k s)-\sinh(\alpha_k s)\right)\, .
\end{aligned}
\end{equation}
We note that $\alpha_k\to\frac{(2k+1)\pi}{2}$ as $k\to\infty$, and that the smallest eigenvalue of $\mc{L}$ is given by $\lambda_1\approx(4.73)^4\approx 500$. We further note that $\psi_k(s)$ is even about $s=\frac{1}{2}$ for odd $k$, and odd about $s=\frac{1}{2}$ for even $k$. \\

We may consider the expansion of any $u\in L^2(I)$ in eigenfunctions of $\mc{L}$:
\begin{equation}\label{eq:wt_expand}
u = \sum_{k=1}^\infty \wt{u}_k\psi_k\,, \quad \wt{u}_k = \int_0^1u(s)\psi_k(s)ds\,.
\end{equation}
The domain of $\mc{L}^r$, $0\le r\le1$, may then be defined by  
\begin{equation}\label{eq:domainLr}
D(\mc{L}^r) = \bigg\{ u\in L^2(I) \;:\; \sum_{k=1}^\infty\lambda_k^{2r}\wt{u}_k^2 <\infty \bigg\}\,.
\end{equation}
Note that $D(\mc{L}^r)\subseteq H^{4r}(I)$ for $0\le r \le 1$, and $D(\mc{L}^0)= L^2(I)$.\\


With $\mc{L}$ as defined in \eqref{eq:L_op}, we may define a mild solution $(\overline\kappa,\xi)$ to the system \eqref{eq:kappadot}-\eqref{eq:BCs} by the Duhamel formula
\begin{equation}\label{eq:duhamel}
\begin{aligned}
\begin{pmatrix}
\overline\kappa \\
\xi 
\end{pmatrix} &= e^{\mc{A}t}\begin{pmatrix}
\overline\kappa^{\rm in} \\
\xi^{\rm in} 
\end{pmatrix} - \int_0^te^{\mc{A}(t-t')}\begin{pmatrix}
 \dot\kappa_0 \\
0
\end{pmatrix}\, dt'+ (1+\mu)\int_0^te^{\mc{A}(t-t')}\begin{pmatrix}
 \big(\mc{N}[\overline\kappa,\kappa_0] \big)_s \\
0
\end{pmatrix}\, dt' \\
&\hspace{3cm}
 - \mu(1+\gamma)\int_0^te^{\mc{A}(t-t')}\begin{pmatrix}
\big((\overline\kappa+\kappa_0)^2\xi_s \big)_s \\
0
\end{pmatrix}\, dt'\,,
\end{aligned}
\end{equation}

where $\mc{A}$ denotes the operator
\begin{equation}\label{eq:Aop}
\mc{A} = \begin{pmatrix}
-(1+\mu)\mc{L} & \mu\mc{L} \\
\delta^{-1} & -\delta^{-1}
\end{pmatrix}\,.
\end{equation}

\subsection{Statement of results}\label{subsec:statements}
Our first result is well-posedness for the system \eqref{eq:kappadot}-\eqref{eq:BCs} in the case of small $\kappa_0$ and either short time or small initial data $\overline\kappa^{\rm in}$. We note that the viscoelastic system inherits all of the subtleties of the Newtonian case \eqref{eq:NewtonianKappa} that make global well-posedness difficult for large initial data. In particular, the behavior of the filament tension $\overline\tau$, particularly its dependence on powers of $\overline\kappa_s$, limits what we can show in terms of well-posedness. See \cite{RFTpaper} for a deeper discussion of these issues in the Newtonian setting. \\

Here and throughout, we use the notation
\begin{align*}
\norm{\begin{pmatrix}
u\\
\phi
\end{pmatrix}}_{\dot H^m\times \dot H^m} :=\norm{u}_{\dot H^m}+\norm{\phi}_{\dot H^m} \,.
\end{align*}
The well-posedness results for the system \eqref{eq:kappadot}-\eqref{eq:BCs} may be stated as follows.

\begin{theorem}[Well-posedness]\label{thm:wellposed}
There exist constants $\varepsilon>0,\varepsilon_1\ge0,\varepsilon_2\ge0$ such that, given $\kappa_0\in C^1([0,T];H^1(I))$ satisfying
\begin{align*}
\sup_{t\in[0,T]}\norm{\kappa_0}_{H^1(I)}=\varepsilon_1\le \varepsilon\,, \quad \sup_{t\in[0,T]}\norm{\dot\kappa_0}_{L^2(I)}=\varepsilon_2\le \varepsilon\,,
\end{align*}
there exist
\begin{enumerate}
\item A time $T_\varepsilon>0$ depending on $(\overline\kappa^{\rm in},\xi^{\rm in})$ such that the system \eqref{eq:kappadot}--\eqref{eq:BCs} admits a unique mild solution $(\overline\kappa,\xi)\in C([0,T_\varepsilon];L^2(I)\times L^2(I))\cap C((0,T_\varepsilon];\dot H^1(I)\times \dot H^1(I))$. \\

\item A constant $\varepsilon_3>0$ such that if 
\begin{align*}
\norm{\begin{pmatrix}
\overline\kappa^{\rm in} \\
\xi^{\rm in}
\end{pmatrix}}_{L^2(I)\times L^2(I)} = \varepsilon_3 \le \varepsilon\,,
\end{align*}
then, for any $T>0$, the system \eqref{eq:kappadot}--\eqref{eq:BCs} admits a unique mild solution $(\overline\kappa,\xi)\in C([0,T]; L^2(I)\times L^2(I))\cap C((0,T];\dot H^1(I)\times \dot H^1(I))$ satisfying
\begin{equation}\label{est:withforcing}
\sup_{t\in[0,T]}\bigg(\norm{\begin{pmatrix}
\overline\kappa \\
\xi
\end{pmatrix}}_{L^2\times L^2} + \min\{t^{1/4},1\}\norm{\begin{pmatrix}
\overline\kappa \\
\xi
\end{pmatrix}}_{\dot H^1\times\dot H^1} \bigg)
 \le c\,(\varepsilon_1+\varepsilon_2+\varepsilon_3)\,.
\end{equation}
\end{enumerate}

In case (2), in the absence of an internal forcing ($\kappa_0\equiv0$), we may obtain the bound 
\begin{equation}\label{est:noforcing}
\norm{\begin{pmatrix}
\kappa \\
\xi
\end{pmatrix}}_{L^2(I)\times L^2(I)} + \min\{t^{1/4},1\}\norm{\begin{pmatrix}
\kappa \\
\xi
\end{pmatrix}}_{\dot H^1(I)\times\dot H^1(I)} \le c\,e^{-t\Lambda} \norm{\begin{pmatrix}
\kappa^{\rm in}\\
\xi^{\rm in}
\end{pmatrix}}_{L^2(I)}\,,
\end{equation}
where $\Lambda=\min\{\lambda_1,\frac{1}{\delta(1+\mu)}\}$ for $\lambda_1$ as in \eqref{eq:L_eigs}. 
\end{theorem}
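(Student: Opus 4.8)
The plan is to solve the fixed-point problem posed by the Duhamel formula \eqref{eq:duhamel} by contraction mapping, following the Newtonian analysis in \cite{RFTpaper}; the essential new ingredient is a quantitative understanding of the coupled semigroup $e^{\mathcal{A}t}$. Since $\mathcal{A}$ commutes with the spectral decomposition induced by $\mathcal{L}$, I would first diagonalize: on the span of $\psi_k$ the operator $\mathcal{A}$ acts as the $2\times2$ matrix $\mathcal{A}_k=\begin{pmatrix}-(1+\mu)\lambda_k & \mu\lambda_k\\ \delta^{-1} & -\delta^{-1}\end{pmatrix}$, whose trace $-(1+\mu)\lambda_k-\delta^{-1}$ is negative and whose determinant $\lambda_k\delta^{-1}$ is positive, so both eigenvalues are negative; a short computation (conveniently carried out in the norm $\tilde\kappa_k^2+\mu(\tilde\kappa_k-\tilde\xi_k)^2$ suggested by the energy identity \eqref{eq:energy}, which corresponds to a fixed, $k$-independent positive weight) shows they are real and bounded above by $-\Lambda$, $\Lambda=\min\{\lambda_1,\frac{1}{\delta(1+\mu)}\}$, uniformly in $k$. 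Summing over $k$ then yields the two families of bounds needed: the decay estimate $\|e^{\mathcal{A}t}\|_{L^2\times L^2\to L^2\times L^2}\le c\,e^{-\Lambda t}$, the smoothing estimate $\|e^{\mathcal{A}t}\|_{L^2\times L^2\to\dot H^1\times\dot H^1}\le c\,t^{-1/4}e^{-\Lambda t}$, and the corresponding estimates for $e^{\mathcal{A}t}(\p_s(\cdot),0)^{\mathrm T}$, which absorb the extra $s$-derivative at the cost of a larger power $t^{-\beta}$ with $\beta<1$ still integrable; these all follow by comparison with the scalar biharmonic semigroup $e^{-\mathcal{L}t}$ mode by mode, using $D(\mathcal{L}^{1/4})=\dot H^1$.

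\textbf{The fixed-point setup.} I would then work in $X_T=C([0,T];L^2(I)\times L^2(I))\cap C((0,T];\dot H^1(I)\times\dot H^1(I))$ with the norm on the left-hand side of \eqref{est:withforcing}, and take $\Phi(\overline\kappa,\xi)$ to be the right-hand side of \eqref{eq:duhamel}. The linear and forcing contributions $e^{\mathcal{A}t}(\overline\kappa^{\rm in},\xi^{\rm in})^{\mathrm T}-\int_0^te^{\mathcal{A}(t-t')}(\dot\kappa_0,0)^{\mathrm T}\,dt'$ are controlled in $X_T$ by $c(\|(\overline\kappa^{\rm in},\xi^{\rm in})\|_{L^2\times L^2}+\varepsilon_2)$, using the semigroup bounds together with $\int_0^t(t-t')^{-1/4}e^{-\Lambda(t-t')}\,dt'\le c$. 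The core of the estimate is the two nonlinear integrals. For these I first need elliptic bounds for $\overline\tau$: because $(\overline\kappa+\kappa_0)^2\ge0$, the operator $-(1+\gamma)\p_{ss}+(\overline\kappa+\kappa_0)^2$ with homogeneous Dirichlet data is invertible, and from \eqref{eq:taueq} one bounds $\overline\tau$ in the relevant norms by $\|\overline\kappa\|_{\dot H^1}$, $\|\xi\|_{\dot H^1}$ and $\|\kappa_0\|_{H^1}$ — exactly as in \cite{RFTpaper} for the $\overline\kappa,\kappa_0$ part, with the new $\mu$-dependent terms $(2+\gamma)((\overline\kappa+\kappa_0)\xi_s)_s-(\overline\kappa+\kappa_0)_s\xi_s$ estimated in the same manner. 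Inserting this into $\mathcal{N}[\overline\kappa,\kappa_0]$ and into $((\overline\kappa+\kappa_0)^2\xi_s)_s$, every term is at least quadratic in $(\overline\kappa,\xi,\kappa_0)$, so one obtains a self-map estimate $\|\Phi(\overline\kappa,\xi)\|_{X_T}\le c(\|(\overline\kappa^{\rm in},\xi^{\rm in})\|_{L^2\times L^2}+\varepsilon_1+\varepsilon_2)+c(\varepsilon_1+\|(\overline\kappa,\xi)\|_{X_T})\|(\overline\kappa,\xi)\|_{X_T}$ and a matching Lipschitz estimate on differences.

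\textbf{Closing the argument.} For part (1), with $(\overline\kappa^{\rm in},\xi^{\rm in})$ arbitrary, I would choose $T_\varepsilon$ small enough that the time-integral contributions (and the difference $e^{\mathcal{A}t}(\overline\kappa^{\rm in},\xi^{\rm in})^{\mathrm T}-(\overline\kappa^{\rm in},\xi^{\rm in})^{\mathrm T}$ in $L^2\times L^2$) are small, making $\Phi$ a contraction on a small ball in $X_{T_\varepsilon}$. For part (2), taking $\|(\overline\kappa^{\rm in},\xi^{\rm in})\|_{L^2\times L^2}=\varepsilon_3$ and $\varepsilon_1,\varepsilon_2,\varepsilon_3\le\varepsilon$ small, the exponential factors in the semigroup bounds make all time integrals converge uniformly in $T$, so $\Phi$ contracts on the ball of radius $c(\varepsilon_1+\varepsilon_2+\varepsilon_3)$ for every $T>0$, giving a global solution and \eqref{est:withforcing}; uniqueness and the stated continuity in time are then routine. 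For the decay bound \eqref{est:noforcing} I set $\kappa_0\equiv0$ (so $\overline\kappa=\kappa$ and the $\dot\kappa_0$ term drops out) and rerun the estimate keeping the exponential weights, which yields a closed inequality $g(t)\le c\,e^{-\Lambda t}\|(\kappa^{\rm in},\xi^{\rm in})\|_{L^2}+c\int_0^t(t-t')^{-\beta}e^{-\Lambda(t-t')}g(t')^2\,dt'$ for $g(t)=\|(\kappa,\xi)\|_{L^2\times L^2}+\min\{t^{1/4},1\}\|(\kappa,\xi)\|_{\dot H^1\times\dot H^1}$; since part (2) already gives $g$ globally small, a standard bootstrap (comparison with $Me^{-\Lambda t}$) upgrades this to $g(t)\le c\,e^{-\Lambda t}\|(\kappa^{\rm in},\xi^{\rm in})\|_{L^2}$.

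\textbf{Main obstacle.} I expect the principal difficulty to be the tension and the associated derivative bookkeeping: $\mathcal{N}$ contains $\overline\tau_s$, so the Duhamel integrand $(\mathcal{N}[\overline\kappa,\kappa_0])_s$ involves $\overline\tau_{ss}$, which via \eqref{eq:taueq} reintroduces second $s$-derivatives of $\overline\kappa$ (and quadratic expressions in $\overline\kappa_s$) through $\mathcal{T}$; these derivatives must be split between the smoothing kernel $e^{\mathcal{A}(t-t')}\p_s$ and the data so that the resulting time singularity stays below order $1$, and one must check that the new $\xi$-dependent tension terms and the extra nonlinearity $\mu(1+\gamma)((\overline\kappa+\kappa_0)^2\xi_s)_s$ do not worsen this count. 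Here the structure already worked out in \cite{RFTpaper} does most of the work, with the coupled semigroup $e^{\mathcal{A}t}$ supplying exactly the same $t^{-1/4}$ smoothing as the scalar biharmonic semigroup there.
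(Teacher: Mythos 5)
Your strategy is the same as the paper's: recast \eqref{eq:kappadot}--\eqref{eq:BCs} as the Duhamel fixed-point problem \eqref{eq:duhamel}, diagonalize $\widetilde{\mathcal{A}}_k$ mode by mode to obtain decay and $t^{-r}$--smoothing for $e^{t\mathcal{A}}$ (Lemma~\ref{lem:semigrp}), prove elliptic bounds for $\overline\tau$ via Lax--Milgram with the viscoelastic source term added to the Newtonian one (Lemma~\ref{lem:tension}), and close a contraction in the space with norm $\sup_t(\|\cdot\|_{L^2\times L^2}+\min\{t^{1/4},1\}\|\cdot\|_{\dot H^1\times\dot H^1})$.

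Two execution points are glossed over in a way that would need filling in. First, the statement ``these all follow by comparison with the scalar biharmonic semigroup mode by mode'' hides the real work in Lemma~\ref{lem:semigrp}. Because $\widetilde{\mathcal{A}}_k$ is not normal, the decay-plus-smoothing estimates require explicit control of the eigenvector expansion coefficients (the paper's bounds \eqref{eq:A_coeffbds}, and their adjoint analogues, with powers of $\delta\lambda_k$), and the extra $\partial_s$ on the nonlinearity is transferred to the semigroup by a duality argument with $\mathcal{A}^*$ rather than any direct scalar comparison; the energy norm you mention gives the right weight for the exponential rate $\Lambda$, but it does not by itself produce the $t^{-1/4}$ smoothing nor the $j=1$ estimate in \eqref{est:semigrp1}. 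Second, for the no-forcing decay \eqref{est:noforcing} you propose a bootstrap for $g(t)$; the paper instead simply reruns the contraction in the exponentially weighted norms \eqref{eq:Y0Y1tilde}. Both work, but the weighted-norm route avoids needing to already know $g$ is globally small before upgrading.

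There is, however, a genuine gap in your part~(1). When $(\overline\kappa^{\rm in},\xi^{\rm in})$ is only $L^2$ and not small, the free-evolution term contributes $\sup_t\min\{t^{1/4},1\}\|e^{t\mathcal{A}}(\overline\kappa^{\rm in},\xi^{\rm in})^{\mathrm T}\|_{\dot H^1\times\dot H^1}\sim\|(\overline\kappa^{\rm in},\xi^{\rm in})\|_{L^2\times L^2}$, which does not shrink as $T\to0$; nor does making $\|e^{t\mathcal{A}}(\overline\kappa^{\rm in},\xi^{\rm in})^{\mathrm T}-(\overline\kappa^{\rm in},\xi^{\rm in})^{\mathrm T}\|_{L^2\times L^2}$ small address the $\dot H^1$ part. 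So a contraction on a small ball in your $X_{T_\varepsilon}$ around $0$ does not close for large data. The paper handles this with Lemma~\ref{lem:smalltime}: approximate $(\overline\kappa^{\rm in},\xi^{\rm in})$ by finitely many modes (which lie in $D(\mathcal{L}^{1/4})$), control the tail uniformly in $t$ by Lemma~\ref{lem:semigrp}, and then shrink $t$ to kill the finite-mode contribution. Some version of this density/approximation step is needed to make the local existence argument for arbitrary $L^2$ data go through.
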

As in the Newtonian case, in the absence of internal fiber forcing, the straight filament is nonlinearly stable to small perturbations. 
However, the fiber relaxation time $\Lambda$ in \eqref{est:noforcing} depends on the viscoelastic parameters $\delta$ and $\mu$. In particular, if either $\delta$ or $\mu$ is large (even if, e.g., $\delta=\mu=1$), the decay rate of perturbations to the straight filament is much slower than in the Newtonian setting (where $\Lambda=\lambda_1\approx 500$).
Finally, similar to the Newtonian setting, the quantity $\norm{(\overline\kappa^{\rm in},\xi^{\rm in})}_{L^2(I)\times L^2(I)}$ in case (2) corresponds to the initial viscoelastic bending energy \eqref{eq:energy} of the filament; in particular, a small initial energy leads to global existence. \\

For applications to undulatory swimming, we are most interested in prescribing a time-periodic preferred curvature $\kappa_0$ and understanding properties of the resulting time-periodic solution. Given a $T$-periodic $\kappa_0$, we prove the existence of a unique $T$-periodic solution $(\overline\kappa,\xi)$ to \eqref{eq:kappadot}-\eqref{eq:BCs}. Moreover, we show that since the prescribed $\kappa_0$ is small, the unique periodic solution $(\overline\kappa,\xi)$ is close to the solution to the linearized version of \eqref{eq:kappadot}-\eqref{eq:BCs}. This will be useful for computing an expression for the fiber swimming speed. Finally, we show that as the viscoelastic relaxation time $\delta\to 0$, the unique periodic solution converges to the unique periodic solution of the Newtonian PDE \eqref{eq:NewtonianKappa}, studied in detail in \cite{RFTpaper}. Recall that $\delta>0$ is a singular perturbation of the $\delta=0$ case (see \eqref{eq:xidot}).
\begin{theorem}[Periodic solutions and properties]\label{thm:per_deb}
There exists a constant $\varepsilon>0$ such that, given a $T$-periodic $\kappa_0\in C^1([0,T];H^1(I))$ satisfying
\begin{equation}\label{est:kapp0bds}
 \sup_{t\in[0,T]} \norm{\dot\kappa_0}_{L^2(I)} = \varepsilon_1 \le \varepsilon\,, \qquad \sup_{t\in[0,T]} \norm{\kappa_0}_{H^1(I)} = \varepsilon_2 \le \varepsilon\,,
 \end{equation} 
 \begin{enumerate}
  \item[(a)]
 There exists a unique $T$-periodic solution $(\overline\kappa,\xi)$ to the system \eqref{eq:kappadot}-\eqref{eq:BCs} satisfying 
 \begin{equation}\label{est:periodic}
  \sup_{t\in[0,T]} \norm{\begin{pmatrix}
  \overline\kappa\\
  \xi
  \end{pmatrix}}_{H^1(I)\times H^1(I)} \le c\,\big(\varepsilon_1 + \varepsilon_2\big)\,.
 \end{equation}
\item[(b)] Defining $(\overline\kappa^{\rm lin},\xi^{\rm lin})$ to be the unique periodic solution to the linear PDE
\begin{equation}\label{eq:lin_def}
\begin{aligned}
\dot{\overline\kappa}^{\rm lin}&= -(1+\mu)\overline\kappa^{\rm lin}_{ssss}+\mu\xi^{\rm lin}_{ssss} - \dot\kappa_0 \\
\delta\dot\xi^{\rm lin} &= -\xi^{\rm lin}+\overline\kappa^{\rm lin} \\
\overline\kappa^{\rm lin}\big|_{s=0,1}&= \xi^{\rm lin}\big|_{s=0,1}=0\,,\;  \overline\kappa^{\rm lin}_s\big|_{s=0,1}= \xi^{\rm lin}_s\big|_{s=0,1}=0\,, 
\end{aligned}
\end{equation}
the periodic solution $(\overline\kappa,\xi)$ of \eqref{est:periodic} satisfies 
\begin{equation}\label{est:lin_per_bd}
\sup_{t\in [0,T]}\norm{\begin{pmatrix}
\overline \kappa- \overline\kappa^{\rm lin}\\
\xi - \xi^{\rm lin}
\end{pmatrix}}_{H^1(I)\times H^1(I)} \le c\,\varepsilon^3\,.
\end{equation} 
\item[(c)] In the limit $\delta\to 0$, the periodic solution $(\overline\kappa,\xi)$ satisfies 
\begin{equation}\label{est:small_delta}
\norm{\overline\kappa-\xi}_{H^1(I)} \le c\,\delta^{1/2}\big(\sup_{t\in[0,T]}\norm{\dot\kappa_0}_{L^2} +\sup_{t\in[0,T]}\norm{\kappa_0}_{H^1} \big)\,.
\end{equation}
In particular, as $\delta\to 0$, $(\overline\kappa,\xi)$ converges to the solution to the Newtonian PDE \eqref{eq:NewtonianKappa} with the same forcing $\kappa_0$. 
\end{enumerate}
\end{theorem}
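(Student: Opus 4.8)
The plan is to treat all three parts by a contraction/fixed-point argument analogous to the Newtonian analysis in \cite{RFTpaper}, but now carried out for the coupled $(\overline\kappa,\xi)$ system via the semigroup $e^{\mc{A}t}$ generated by the operator $\mc{A}$ in \eqref{eq:Aop}. The first step is to understand $e^{\mc{A}t}$ by diagonalizing in the eigenbasis $\{\psi_k\}$ of $\mc{L}$: on the $k$-th mode the system reduces to the $2\times2$ constant-coefficient ODE with matrix $\begin{pmatrix} -(1+\mu)\lambda_k & \mu\lambda_k \\ \delta^{-1} & -\delta^{-1}\end{pmatrix}$, whose eigenvalues are real, negative, and bounded above by $-\Lambda = -\min\{\lambda_1,\tfrac{1}{\delta(1+\mu)}\}$ (one eigenvalue is $O(-\lambda_k)$ for large $k$, the other stays near $-\tfrac{1}{\delta(1+\mu)}$; the trace is $-(1+\mu)\lambda_k-\delta^{-1}$ and the determinant is $\lambda_k/\delta>0$, so the product of the eigenvalues is positive and their sum negative). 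From this I would extract the two key smoothing/decay estimates: $\norm{e^{\mc{A}t}(u,\phi)}_{\dot H^1\times\dot H^1}\le c\,t^{-1/4}e^{-t\Lambda}\norm{(u,\phi)}_{L^2\times L^2}$ and $\norm{e^{\mc{A}t}(u,0)}_{\dot H^1\times\dot H^1}\le c\,e^{-t\Lambda}\norm{u}_{\dot H^1}$, exactly the analytic-semigroup-type bounds used in \cite{RFTpaper}, now with the $\dot H^1$-type norm on the product space. The only subtlety is the off-diagonal $\mu\mc{L}$ coupling; since it sits in the nilpotent direction relative to the dominant decay, it does not spoil the parabolic smoothing — one checks the resolvent $(\mc{A}-z)^{-1}$ is bounded on a sector and applies the standard functional-calculus estimates.

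With these semigroup bounds in hand, part (1) is a short-time contraction for the Duhamel map \eqref{eq:duhamel} on the ball in $C([0,T_\varepsilon];L^2\times L^2)\cap C((0,T_\varepsilon];\dot H^1\times\dot H^1)$ with norm $\sup_t(\,\cdot\,+\min\{t^{1/4},1\}\,\cdot\,)$; the nonlinear terms $\big(\mc{N}[\overline\kappa,\kappa_0]\big)_s$ and $\big((\overline\kappa+\kappa_0)^2\xi_s\big)_s$ are controlled in $L^2$ via Sobolev/interpolation exactly as in \cite{RFTpaper}, after first solving the elliptic problem \eqref{eq:taueq} for $\overline\tau$ in terms of $\overline\kappa$, $\kappa_0$, $\xi$ — here the extra $\xi$-dependent source in \eqref{eq:taueq} is handled by the same elliptic estimates since $\mu/(1+\mu)\le1$ and $\xi$ is controlled in $\dot H^1$. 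Part (2) is the same argument made global: because $\Lambda>0$ the Duhamel integrals converge on $[0,\infty)$ and, for $\kappa_0\equiv0$, the forcing term drops, giving the exponential bound \eqref{est:noforcing}; the smallness of $\varepsilon_3$ together with the absorbed exponential closes the global estimate \eqref{est:withforcing}. The energy identity \eqref{eq:energy} can be used as a consistency check but the quantitative bounds come from the Duhamel/contraction estimates.

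For the periodic part (a), I would use the standard device: the time-$T$ solution map is a contraction on a small ball of $H^1\times H^1$ initial data (using \eqref{est:withforcing} plus the exponential factor $e^{-T\Lambda}<1$ after one period, or alternatively a direct contraction on $C_{\rm per}([0,T];H^1\times H^1)$ via the periodic Green's operator $(I-e^{\mc{A}T})^{-1}$, which is bounded because $\|e^{\mc{A}T}\|<1$ on the complement of the kernel — and $\mc{A}$ has no kernel here, since $\det$ on each mode is $\lambda_k/\delta\neq0$). The fixed point gives the unique $T$-periodic $(\overline\kappa,\xi)$ with \eqref{est:periodic}. Part (b) follows by subtracting the linear periodic problem \eqref{eq:lin_def} from the full one: $(\overline\kappa-\overline\kappa^{\rm lin},\xi-\xi^{\rm lin})$ solves the same linear periodic system with source $(1+\mu)(\mc{N})_s-\mu(1+\gamma)((\overline\kappa+\kappa_0)^2\xi_s)_s$, which by \eqref{est:periodic} and the structure of $\mc{N},\mc{T}$ (every term is at least quadratic in the small quantities $\overline\kappa,\kappa_0,\xi$, and $\overline\tau$ is itself quadratically small) is $O(\varepsilon^3)$ in the relevant norm; inverting the periodic linear operator then yields \eqref{est:lin_per_bd}.

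Finally, for part (c), I would exploit \eqref{eq:xidot} directly: $\overline\kappa-\xi=\delta\dot\xi$, so it suffices to bound $\dot\xi$ in $H^1$ uniformly in $\delta$. Differentiating the periodic Duhamel formula (or the energy identity \eqref{eq:energy}, which gives $\delta\mu\int_0^T\!\!\int_0^1\dot\xi^2\,ds\,dt\le$ the work done by the forcing over a period, hence $\int_0^T\|\dot\xi\|_{L^2}^2\,dt\le c\,\delta^{-1}\varepsilon^2$ — not quite enough) and instead using the mode-wise ODE to get the pointwise-in-$t$ bound $\|\dot\xi\|_{H^1}\le c(\|\dot\kappa_0\|_{L^2}+\|\kappa_0\|_{H^1})$ uniformly in $\delta$ from the periodic solution estimate \eqref{est:periodic} applied to a $\delta$-independent rescaling, one obtains $\|\overline\kappa-\xi\|_{H^1}=\delta\|\dot\xi\|_{H^1}\le c\,\delta\,(\cdots)$, which even improves \eqref{est:small_delta}; the stated $\delta^{1/2}$ is then a safe conclusion. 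Convergence to the Newtonian periodic solution of \eqref{eq:NewtonianKappa} follows since, with $\xi\to\overline\kappa$, the viscoelastic right-hand side $-(1+\mu)\overline\kappa_{ssss}+\mu\xi_{ssss}\to-\overline\kappa_{ssss}$ and the $\xi$-dependent nonlinear term $-\mu(1+\gamma)((\overline\kappa+\kappa_0)^2\xi_s)_s\to-\mu(1+\gamma)((\overline\kappa+\kappa_0)^2\overline\kappa_s)_s$, which combines with the $(1+\mu)$ factor in $(\mc{N})_s$ to reproduce exactly the Newtonian operator in \eqref{eq:NewtonianKappa} (one must check this cancellation of the $\mu$-terms explicitly — this is the one genuinely model-specific computation).

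\textbf{Main obstacle.} The crux is establishing the parabolic-type estimates for $e^{\mc{A}t}$ uniformly (or with explicit dependence) in the parameters $\delta,\mu$ despite the non-self-adjoint, non-normal coupling in $\mc{A}$: one must show the $2\times2$ mode matrices are uniformly sectorial with the decay governed by $\Lambda$, and that the off-diagonal $\mu\mc{L}$ does not destroy the $t^{-1/4}$ smoothing. Everything else — the contraction, the elliptic estimate for $\overline\tau$, the periodic fixed point — is a parameter-tracking adaptation of \cite{RFTpaper}.
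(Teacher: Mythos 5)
Your treatment of parts (a) and (b) matches the paper's route: the paper proves a Lemma (lem:lin\_per) establishing that, for the \emph{linear} system \eqref{eq:linear_per} with a general $T$-periodic source $g$, the time-$T$ Poincar\'e map is a contraction on a ball of $H^1\times H^1$ initial data, and then treats the nonlinearity $g=(1+\mu)\mc{N}-\mu(1+\gamma)(\overline\kappa+\kappa_0)^2\xi_s$ by a second fixed-point argument in $C_{\rm per}([0,T];H^1\times H^1)$; part (b) follows by comparing against the $g=0$ solution and using that the nonlinear source is $O(\varepsilon^3)$ in $L^2$ (which relies crucially on Lemma lem:tension to control the $\xi$-contribution to $\overline\tau$). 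This is essentially what you describe, including the alternative formulation through $(I-e^{\mc A T})^{-1}$. Note also that the semigroup properties you isolate as the ``main obstacle'' are exactly Lemma lem:semigrp in the paper, and the paper obtains them not by a sectoriality/resolvent argument but by a fully explicit mode-wise diagonalization of $\wt{\mc A}_k$ (eigenvalues $\nu_k^\pm$, eigenvector decomposition \eqref{eq:A_decomp}, and a duality trick for the $\p_s^j$ estimates). Your sketch of those bounds is plausible but does not actually track the $\delta$-dependence of constants, which the paper needs for part (c).

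Part (c) is where there is a genuine gap. You propose to write $\overline\kappa-\xi=\delta\dot\xi$ and then claim $\|\dot\xi\|_{H^1}\le c(\|\dot\kappa_0\|_{L^2}+\|\kappa_0\|_{H^1})$ \emph{uniformly in $\delta$}, ``from the periodic solution estimate \eqref{est:periodic} applied to a $\delta$-independent rescaling,'' which would even upgrade the rate from $\delta^{1/2}$ to $\delta$. But $\dot\xi=\delta^{-1}(\overline\kappa-\xi)$, so a uniform-in-$\delta$ bound on $\|\dot\xi\|_{H^1}$ is \emph{literally equivalent} to the conclusion $\|\overline\kappa-\xi\|_{H^1}\lesssim\delta$; the estimate \eqref{est:periodic} only gives $\|\overline\kappa-\xi\|_{H^1}\lesssim\varepsilon$, hence $\|\dot\xi\|_{H^1}\lesssim\delta^{-1}\varepsilon$, which blows up. You gesture at ``the mode-wise ODE'' and ``a $\delta$-independent rescaling'' but do not supply the actual mechanism that breaks this circularity, and the rate $O(\delta)$ is almost certainly not true under the stated regularity $\kappa_0\in C^1([0,T];H^1)$: the mode-wise formula $\wt\xi_{k,m}=\wt\kappa_{k,m}/(1+i\delta\omega m)$ shows that the difference carries a factor $\delta\omega m/(1+i\delta\omega m)$, and summing $\lambda_k^{1/2}(\omega m)^2|\wt\kappa_{k,m}|^2$ to get an $H^1$ bound on $\dot\xi$ needs more regularity of $\kappa_0$ in both $s$ and $t$ than is assumed. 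The paper avoids this altogether with Lemma lem:deltadiff, a mode-wise estimate of the \emph{difference of components} of $e^{t\mc A}(\p_s^j f,0)^{\rm T}$: in the eigenbasis of $\wt{\mc A}_k$, the two components of the eigenvector $\bv_k^\pm$ differ by $\delta\nu_k^\pm$, and carefully tracking the factor $(\delta\lambda_k)^r$ in the coefficient bounds produces exactly the $\delta^{1-(j+m)/4}$ rate. Plugged into the periodic Duhamel formula with $j=1$ (one derivative on the nonlinear source) and $m=1$ ($H^1$ target norm), this is where $\delta^{1/2}$ comes from; it is not slack.
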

The proof of Theorem \ref{thm:per_deb} appears in two parts: parts (a) and (b) are shown in section \ref{subsec:periodic}, and part (c) is shown in section \ref{subsec:deborah}. \\

Given a $T$-periodic $\kappa_0$ and the corresponding $T$-periodic solution guaranteed by Theorem \ref{thm:per_deb}, we may now study the actual swimming speed of the filament. We first calculate an expression for the swimming velocity $\bm{V}(t)=\int_0^1\frac{\p\X}{\p t}\,ds$. The expression involves $\kappa_0$, $\overline\kappa$, and $\xi$. To better understand the swimming speed of the filament, especially in relation to the Newtonian setting, we need an expression in terms of $\kappa_0$ only. We use the closeness of $(\overline\kappa,\xi)$ to $(\overline\kappa^{\rm lin},\xi^{\rm lin})$ from Theorem \ref{thm:per_deb} (b) to obtain an expression for the average filament swimming speed at leading order in terms of $\kappa_0$ only. Here and throughout, for $u=u(t)$, we will denote the time average over one period by 
\begin{equation}\label{eq:langlerangle}
\langle u \rangle := \frac{1}{T}\int_0^T u \,dt\,.
\end{equation}

The fiber swimming velocity $\bm{V}(t)$ and average speed in direction $\be_{\rm t}(0,0)$ are given as follows.
\begin{theorem}\label{thm:VEswimming}
For $\varepsilon$ as in Theorem \ref{thm:per_deb}, given a $T$-periodic $\kappa_0\in C^1([0,T];H^3(I))$ satisfying
\begin{equation}\label{est:kapp0bds2}
 \sup_{t\in[0,T]} \norm{\dot\kappa_0}_{L^2} = \varepsilon_1 \le \varepsilon\,, \qquad \sup_{t\in[0,T]} \norm{\kappa_0}_{H^1} = \varepsilon_2 \le \varepsilon\,,
 \end{equation} 
a filament satisfying equations \eqref{eq:kappadot}-\eqref{eq:BCs} swims with velocity
\begin{equation}\label{est:swimspeed1}
\bm{V}(t)= U(t)\be_{\rm t}(0,0) + \bm{r}_{\rm v}(t)\,,
\end{equation}
where $\sup_{t\in[0,T]}\abs{U}\le c\varepsilon^2$, $\sup_{t\in[0,T]}\abs{\bm{r}_{\rm v}}\le c\varepsilon^3$, and
\begin{equation}\label{est:Ut}
U(t) =  -\gamma\int_0^1 (\kappa_0)_s\overline\kappa\, ds -\gamma\mu\int_0^1(\overline\kappa-\xi)(\overline\kappa+\kappa_0)_s \, ds\,.
\end{equation}

Moreover, expanding $\kappa_0$ as $\kappa_0(s,t)=\sum_{m,k=1}^\infty\big(a_{m,k}\cos(\omega m t)-b_{m,k}\sin(\omega m t)\big)\psi_k(s)$, where $\psi_k$ are the eigenfunctions \eqref{eq:L_eigs} of the operator $\mc{L}$ and $\omega=\frac{2\pi}{T}$, the average swimming speed $\langle U \rangle$ over the course of one time period is given by 
\begin{equation}\label{est:swimspeed2}
\begin{aligned}
\langle U \rangle &=\frac{\gamma}{2}\sum_{m,k,\ell=1}^\infty\bigg( W_{1,m\ell k}\,(a_{m,k}b_{m,\ell}-a_{m,\ell}b_{m,k}) \\
&\hspace{2cm} + W_{2,m\ell k}\,(a_{m,k}a_{m,\ell}+b_{m,k}b_{m,\ell})\bigg)
%
 \int_0^1\psi_k(\psi_\ell)_s\,ds\, + r_{\rm u}\,.
\end{aligned}
\end{equation}
Here coefficients $W_{j,m\ell k}$ are explicitly computable (see equation \eqref{eq:Wexpr}) and the remainder term $r_{\rm u}$ satisfies $\abs{r_{\rm u}}\le c\varepsilon^4$.
\end{theorem}

The proof of Theorem \ref{thm:VEswimming} is given in section \ref{sec:VEswim}, and various observations and applications of the swimming expression \eqref{est:swimspeed2} are explored immediately in the following section \ref{sec:numerics}.

\section{Applications and numerical results}\label{sec:numerics}
Here we detail some of the main takeaways and applications of Theorem \ref{thm:VEswimming}, interspersed with numerical simulations.
For the numerical simulations, we will rely on a reformulation of \eqref{eq:VEoriginal1}-\eqref{eq:BCs_original} which avoids the need to solve for the fiber tension $\tau$. We propose a natural extension of the numerical method in \cite{RFTpaper}, which is itself based on a combination of the formulations of \cite{moreau2018asymptotic} and \cite{maxian2021integral}. The method is described in detail in Appendix \ref{app:nummeth}. \\

We begin with some observations about the viscoelastic swimming speed expression \eqref{est:swimspeed2}. We first note the forms of the coefficients $W_{j,m\ell k}$, which are calculated in section \ref{sec:VEswim}. We have
\begin{equation}\label{eq:Wexpr}
\begin{aligned}
W_{1,m\ell k} &= Q_{m,k}-\frac{\mu\delta\omega m}{1+(\delta\omega m)^2}
\bigg(Q_{m,\ell}Q_{m,k}-\delta\omega m(1- H_{m,\ell})Q_{m,k}\\
&\hspace{4cm} -\delta\omega m H_{m,k}Q_{m,\ell}-(1-H_{m,\ell})H_{m,k}\bigg)\\
W_{2,m\ell k} &= H_{m,k} -\frac{\mu\delta\omega m}{1+(\delta\omega m)^2}
\bigg(\delta\omega mQ_{m,\ell}Q_{m,k}+(1- H_{m,\ell})Q_{m,k} \\
&\hspace{4cm} +H_{m,k}Q_{m,\ell}-\delta\omega m(1-H_{m,\ell})H_{m,k}\bigg)\,,
%
\end{aligned}
\end{equation}
where
\begin{equation}\label{eq:QandH}
\begin{aligned}
Q_{m,k}&= \frac{\lambda_k\omega m(1+(1+\mu)(\delta\omega m)^2)}{\lambda_k^2(1+(1+\mu)^2(\delta\omega m)^2)+\omega^2 m^2(2\mu\delta\lambda_k+1+(\delta\omega m)^2)}\,,
\\
H_{m,k}&=\frac{\omega^2 m^2(\mu\delta\lambda_k+ 1+(\delta\omega m)^2)}{\lambda_k^2(1+(1+\mu)^2(\delta\omega m)^2)+\omega^2 m^2(2\mu\delta\lambda_k+1+(\delta\omega m)^2)}\,.
\end{aligned}
\end{equation}

We will be comparing the coefficients \eqref{eq:Wexpr} with the Newtonian swimmer for the same preferred curvature $\kappa_0$. In \cite{RFTpaper}, the Newtonian swimmer satisfying \eqref{eq:NewtonianKappa} was shown to swim with speed $U^{\rm nw}$ of the form 
\begin{equation}\label{eq:NewtSpeed1}
U^{\rm nw}(t) = -\gamma\int_0^1 (\kappa_0)_s\overline\kappa^{\rm nw} \, ds\,,
\end{equation}
which, again averaging over one period, may be written to leading order in $\varepsilon$ as
\begin{equation}\label{eq:NewtSpeed2}
\begin{aligned}
\langle U^{\rm nw}\rangle &= \frac{\gamma}{2}\sum_{m,k,\ell=1}^\infty\frac{\omega^2 m^2}{\omega^2m^2+\lambda_k^2} \bigg(\frac{\lambda_k}{\omega m}\big(a_{m,k}b_{m,\ell}-b_{m,k}a_{m,\ell} \big) \\
&\hspace{5cm}+ a_{m,k}a_{m,\ell}+b_{m,k}b_{m,\ell} \bigg) \int_0^1\psi_k(\psi_\ell)_s\,ds \,.
\end{aligned}
\end{equation}

Some things that we can immediately note in comparing \eqref{est:swimspeed1} and \eqref{est:swimspeed2} to \eqref{eq:NewtSpeed1} and \eqref{eq:NewtSpeed2} include:
\begin{enumerate}
\item[0.] The viscoelastic swimming speed has a complicated dependence on the parameters $\mu$, $\delta$, and $\omega$ and it is not immediately clear how it compares to the Newtonian swimming speed.

\item[1.] The velocity expression \eqref{est:swimspeed1} has the form of the Newtonian swimming speed \eqref{eq:NewtSpeed1} plus a correction term proportional to $\mu(\overline\kappa-\xi)$, but note that $\overline\kappa\neq\overline\kappa^{\rm nw}$ in general.  

\item[2.] If either $\mu=0$ or $\delta=0$, the viscoelastic expression \eqref{est:swimspeed2} reduces to the Newtonian expression \eqref{eq:NewtSpeed2}. In particular, we obtain $W_{1,m\ell k} = Q_{m,k}$ where $Q_{m,k}=\frac{\lambda_k\omega m}{\lambda_k^2+\omega^2m^2}$ and $W_{2,m\ell k} = H_{m,k}$ where $H_{m,k}=\frac{\omega^2m^2}{\lambda_k^2+\omega^2m^2}$.

\item[3.] Due to the form of the eigenfunctions $\psi_k$ of the operator $\mc{L}$ (see \eqref{eq:L_eigs}), in both the Newtonian and viscoelastic cases, if the preferred curvature $\kappa_0(s,t)$ is always odd or always even about the fiber midpoint $s=\frac{1}{2}$, the swimming speed will vanish. This is because $\psi_{2k}(s)$ is odd about $s=\frac{1}{2}$ and $\psi_{2k-1}(s)$ is even for each $k=1,2,\dots$, and thus $\int_0^1\psi_{2k}(\psi_{2\ell})_s\,ds=0$ and $\int_0^1\psi_{2k-1}(\psi_{2\ell-1})_s\,ds=0$ for each $k,\ell$. In particular, if $\kappa_0(s,t)$ can be written purely in terms of either $\psi_{2k}$ or $\psi_{2k-1}$, the swimming speed \eqref{est:swimspeed2} will vanish. 
\end{enumerate}

Otherwise, owing to the complicated nature of the expression \eqref{est:swimspeed2} and particularly of the coefficients \eqref{eq:Wexpr}, it is difficult to say much in general about the viscoelastic swimmer, but we can make some predictions in certain scenarios. For the following, we will consider $\kappa_0$ of the form
\begin{equation}\label{eq:kappa0}
\kappa_0(s,t) = F_1(s) \cos(\omega t)+F_2(s)\sin(\omega t)\,,
\end{equation}
i.e. we will force only a single mode in time. As such, we will drop all dependence on the temporal mode $m$ in our notation. \\

We will begin by considering the case $F_1=F_2$, in which case the swimming speed expression \eqref{est:swimspeed2} reduces to  
\begin{equation}\label{eq:F1equalF2}
\begin{aligned}
\langle U \rangle &=\gamma\sum_{k,\ell=1}^\infty W_{2,\ell k} \, a_ka_\ell
 \int_0^1\psi_k(\psi_\ell)_s\,ds \,.
\end{aligned}
\end{equation}
We referred to these swimmers as `bad swimmers' in the Newtonian setting \cite{RFTpaper} because the coefficient $W_{2,\ell k}$ is given by $\omega^2/(\lambda_k^2+\omega^2)$ and thus decays very rapidly as $k$ increases. Even $\lambda_1^2\approx 500^2$ is extremely large, and thus to see noticeable displacement for the swimmer over one period, we need to take $\omega$ very large as well. For numerical simulations, we will take $\omega=32\pi$.\\

Since $\lambda_k$ is so large, we consider the leading order behavior of $W_{2,\ell k}$ in $1/\lambda_k$, given by
\begin{equation}\label{eq:F1F2coeff_leading}
W_{2,\ell k} \approx H_k + \frac{\mu(\delta\omega)^2}{1+(\delta\omega)^2}H_k -\frac{\mu\delta\omega}{1+(\delta\omega)^2}Q_k\,.
\end{equation}


In Figure \ref{fig:W2k_expr}, we plot the approximate expression \eqref{eq:F1F2coeff_leading} for $W_{2,\ell k}$ for $\lambda_k=\lambda_1$, $\omega=32\pi$, and various values of $\mu$ and $\delta$. \\

\begin{figure}[!ht]
\centering
  \begin{subfigure}[b]{0.45\textwidth}
    \includegraphics[scale=0.5]{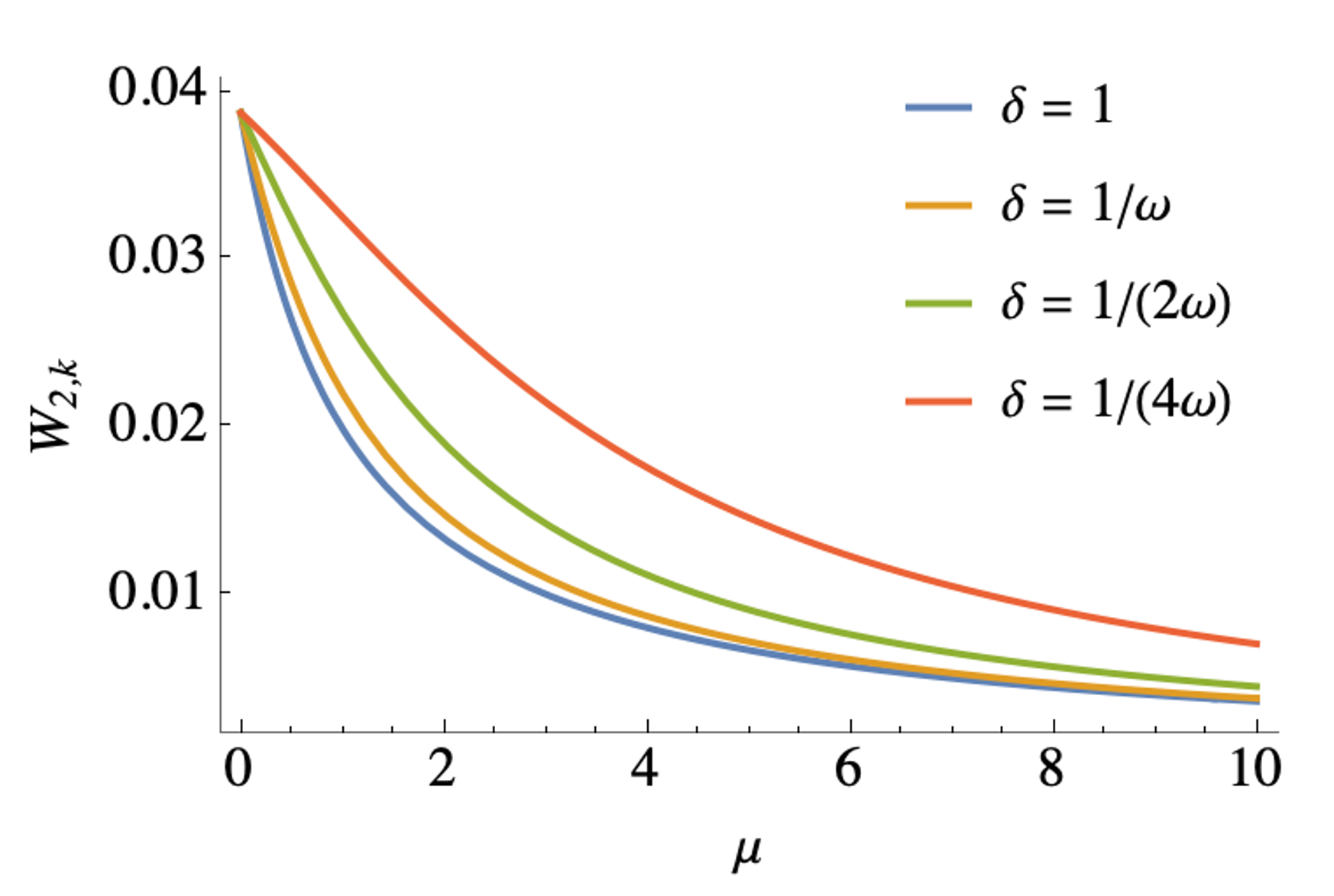}
    \caption{}
  \end{subfigure} 
  \begin{subfigure}[b]{0.45\textwidth}
    \includegraphics[scale=0.5]{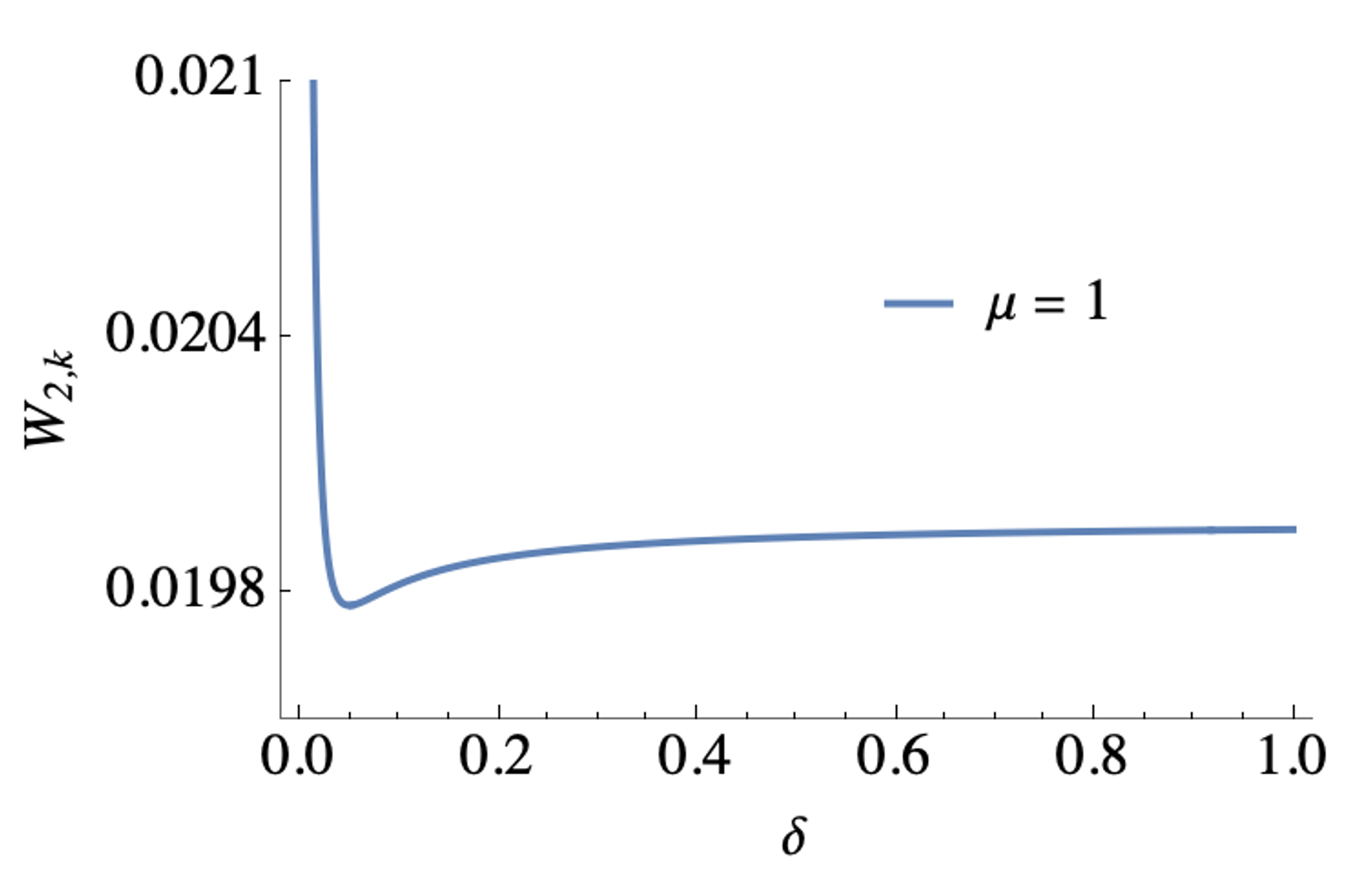}
    \caption{}
  \end{subfigure}
\caption{(a) Plot of the coefficient $W_{2,\ell k}$ \eqref{eq:F1F2coeff_leading} for $\mu\in[0,10]$ for $\lambda_k=\lambda_1$, $\omega=32\pi$, and four different fixed values of $\delta$. (b) Plot of $W_{2,\ell k}$ for $\delta\in[0,1]$ for $\lambda_k=\lambda_1$, $\omega=32\pi$, and $\mu=1$.}
\label{fig:W2k_expr}
\end{figure}

For fixed $\delta>0$, we note that the coefficient $W_{2,\ell k}$ for $k=1$ is monotone decreasing in $\mu$, with a steeper initial decrease for larger values of $\delta$. The coefficient appears to approach 0 as $\mu\to\infty$ for all values of $\delta>0$. The behavior is similar for higher modes $k=2,3,...$, although the magnitude of the coefficient is much smaller due to the $\lambda_k^{-1}$ scaling of $W_{2,\ell k}$. 
For fixed $\mu>0$, the coefficient displays slight non-monotonic behavior in $\delta$. Taking $\delta>0$ results in a sharp initial decline from the $\delta=0$ value, but after this sharp initial drop, the coefficient is very slightly increasing in $\delta$.   \\

Given the behavior displayed in Figure \ref{fig:W2k_expr}, we predict that for any fixed $\delta>0$, we will see a slowdown in the swimmer as $\mu$ is increased. For fixed $\mu>0$, we will see very little change in the swimming speed as $\delta\gtrsim 1$ is increased. 

To test these predictions, we choose a preferred curvature of the form \eqref{eq:kappa0} with $F_1=(s-1)^2$ and $F_2=F_1$ (note that we normalize such that $\norm{F_1}_{L^2}=\norm{F_2}_{L^2}=1$). We take $\omega=32\pi$ and simulate the fiber motion until $t=2$ beginning from a straight line from $x=0$ to $x=1$ along the $x$-axis. We record the fiber's displacement $\int_0^1 \X(s,t_2)\,ds-\int_0^1 \X(s,t_1)\,ds$ between times $t_1=1$ and $t_2=2$ to ensure that the periodic solution has been reached.\\

\begin{figure}[!ht]
\centering
  \begin{subfigure}[b]{0.45\textwidth}
    \includegraphics[scale=0.2]{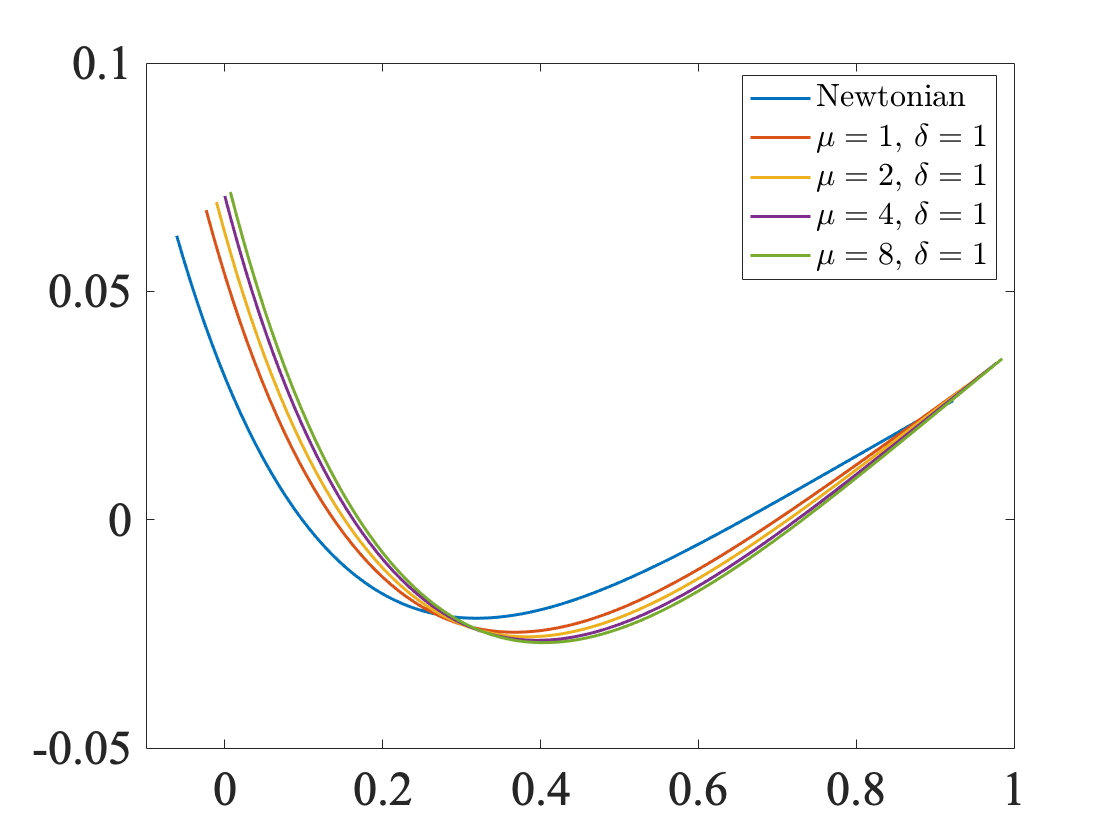}
    \caption{}
  \end{subfigure} 
  \begin{subfigure}[b]{0.45\textwidth}
    \includegraphics[scale=0.2]{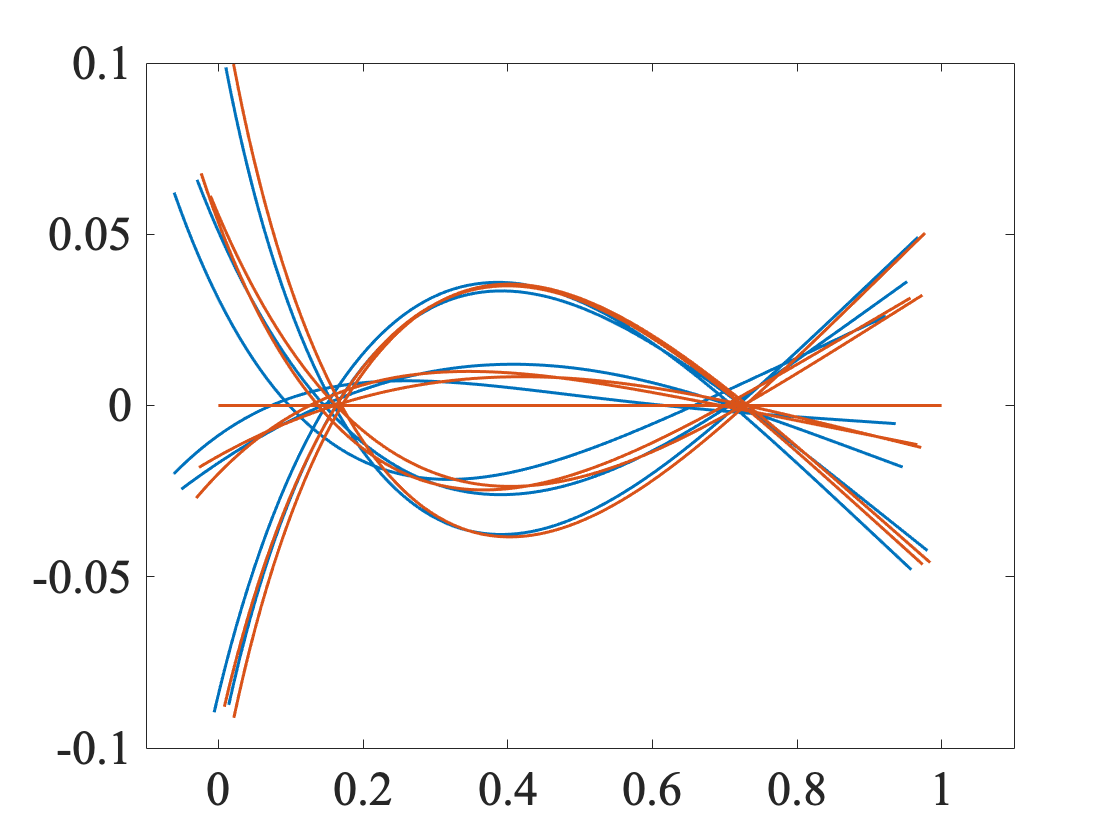}
    \caption{}
  \end{subfigure}
\caption{(a) Location of the fiber at time $t=2$ for fixed $\delta=1$ and five different values of $\mu$. All swimmers swim poorly, but the Newtonian swimmer (blue) is noticeably faster. (b) Comparison of swimmer shapes at ten different snapshots in time for the Newtonian (blue) and $\mu=\delta=1$ (orange) swimmers.}
\label{fig:BadSwim_BigDelta}
\end{figure}

We first fix $\delta=1$ and compare the swimming displacement for 5 different values of $\mu$ (see Figure \ref{fig:BadSwim_BigDelta}a). We compare $\mu=0$ (Newtonian) against $\mu=1,2,4,8$. From Figure \ref{fig:BadSwim_BigDelta}a, we can see that the Newtonian swimmer swims the farthest, although none of the swimmers swim very well. Between $t=1$ and $t=2$, the Newtonian swimmer's displacement is $-0.036$. For $\mu=1,2,4,8$, respectively, the displacement is $-0.018$, $-0.012$, $-0.0069$, $-0.0035$. As predicted, the distance decreases with increasing $\mu$. 
Furthermore, when $\mu$ is fixed at $\mu=1$ and $\delta\gtrsim1$ is varied, there is very little difference in the swimming displacement versus the $\delta=1$ swimmer. When $\delta=1,2,4,8$, respectively, the swimming displacement is still $-0.018$.\\

In Figure \ref{fig:BadSwim_BigDelta}b, snapshots of the location of the swimmer with $\mu=\delta=1$ at ten different points in time between $t=0$ and $t=2$ are plotted against the same points in time for the Newtonian swimmer. \\

\begin{figure}[!ht]
\centering
  \begin{subfigure}[b]{0.45\textwidth}
    \includegraphics[scale=0.18]{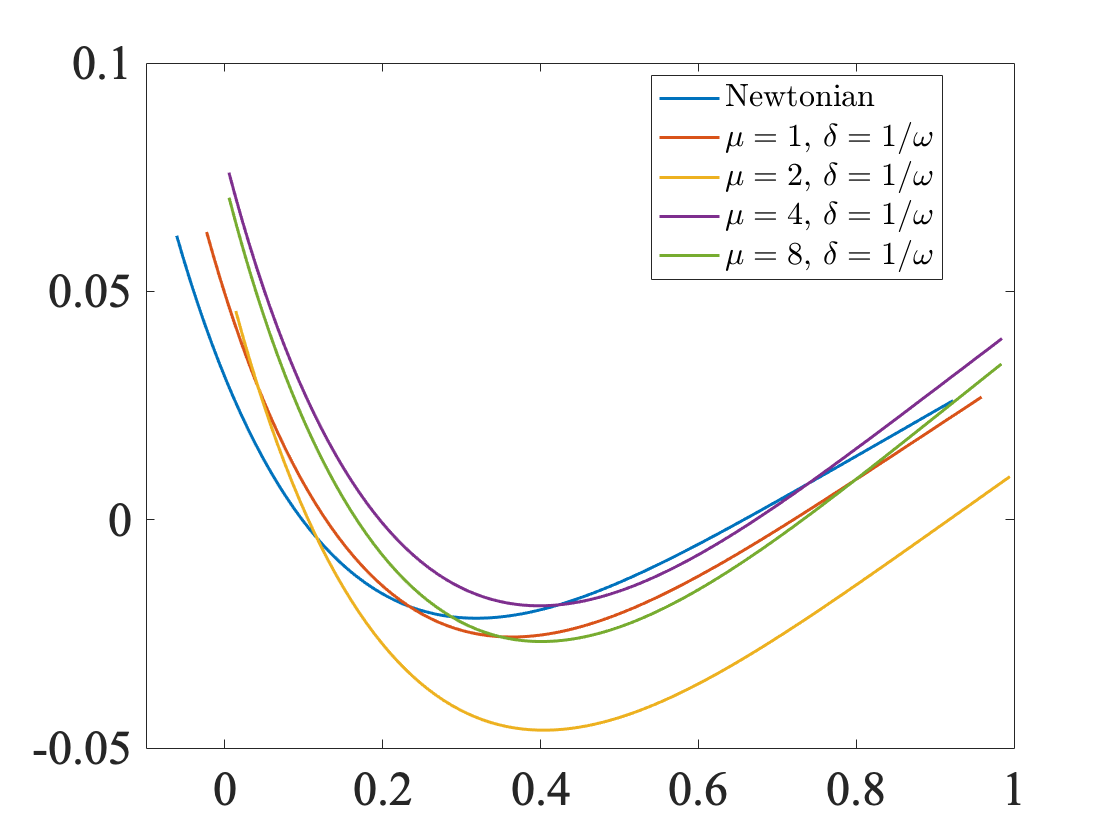}
    \caption{}
  \end{subfigure} 
  \begin{subfigure}[b]{0.45\textwidth}
    \includegraphics[scale=0.47]{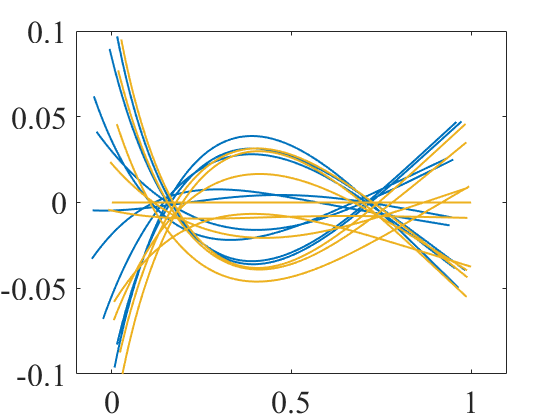}
    \caption{}
  \end{subfigure}
\caption{(a) Location of the fiber at time $t=2$ for fixed $\delta=1/\omega$ and five different values of $\mu$. Notice that each of the viscoelastic swimmers appear to have moved to the right initially, but except for the $\mu=2$ case, after an initial adjustment the swimmers do move leftward. 
(b) Comparison of swimmer shapes at ten different snapshots in time for the Newtonian (blue) and $\mu=2,\delta=1/\omega$ (yellow) swimmers. Note that the yellow swimmer moves backward. }
\label{fig:BadSwim_SmallDelta}
\end{figure}

For small $\delta\sim1/\omega$, things appear to be a bit more complicated than predicted. We fix $\delta=1/\omega$ and simulate the swimmer until $t=2$ using different values of $\mu$. We again calculate the swimmer's displacement between $t=1$ and $t=2$. When $\mu=1$, the displacement is $-0.019$, i.e. roughly the same as when $\delta=1$. However, when $\mu=2$, the swimmer's displacement is $+0.0062$, in particular, we find that the swimmer moves in the opposite direction (see Figure \ref{fig:BadSwim_SmallDelta}). The behavior of the swimmer subsequently becomes more complicated as $\mu$ increases: for $\mu=4$ and $\mu=8$, we observe a displacement of $-0.0030$ and $-0.0040$, respectively. The swimmer now moves in the same direction as for large $\delta$, but instead of losing speed as $\mu$ increases, it appears to gain a bit of speed. For $\mu=8$, the $\delta=1/\omega$ swimmer even swims a bit further than the $\delta=1$ swimmer. 
This discrepancy from the prediction may be due to the already very small nature of displacements when $F_1=F_2$ in the preferred curvature \eqref{eq:kappa0} (indeed, these are the `bad swimmers' in the Newtonian setting \cite{RFTpaper}. It is possible that nonlinear effects or effects of additional terms in the full expression \eqref{eq:Wexpr} for $W_{2,\ell k}$ may be enough to alter the swimming behavior.\\


We next consider the more complicated scenario of $F_1\neq F_2$ in the preferred curvature equation \eqref{eq:kappa0}. Now all terms are present in the swimming expression \eqref{est:swimspeed2}. To leading order in $\frac{1}{\lambda_k}$, the additional coefficients $W_{1,\ell k}$ of the swimming expression \eqref{est:swimspeed2} are given by
\begin{equation}\label{eq:W_reduced}
\begin{aligned}
W_{1,\ell k} &\approx Q_{k}+\frac{\mu(\delta\omega)^2}{1+(\delta\omega)^2}Q_{k} +\frac{\mu\delta\omega}{1+(\delta\omega)^2}H_k\,.
\end{aligned}
\end{equation}
In Figure \ref{fig:W1k_expr}, we plot the coefficient $W_{1,\ell k}$ for $\lambda_k=\lambda_1$, $\omega=32\pi$, and various values of $\delta$ and $\mu$.\\

\begin{figure}[!ht]
\centering
  \begin{subfigure}[b]{0.49\textwidth}
    \includegraphics[scale=0.47]{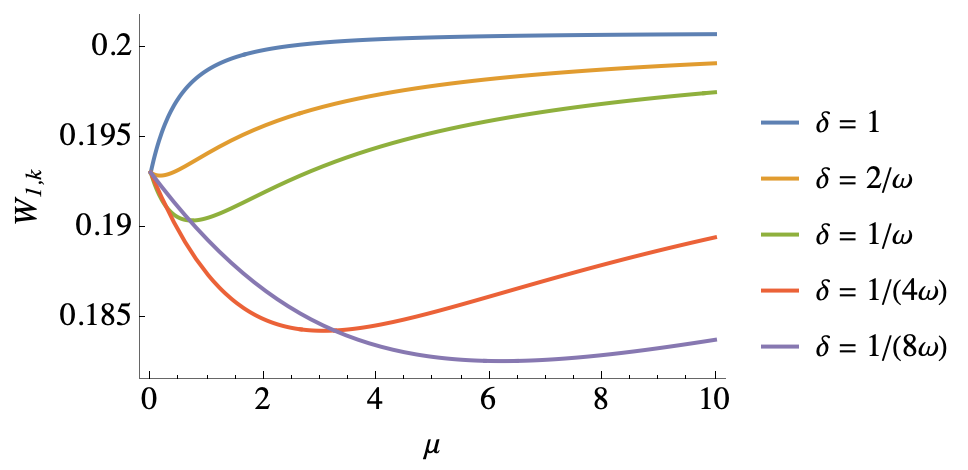}
    \caption{}
  \end{subfigure} 
  \begin{subfigure}[b]{0.49\textwidth}
    \includegraphics[scale=0.47]{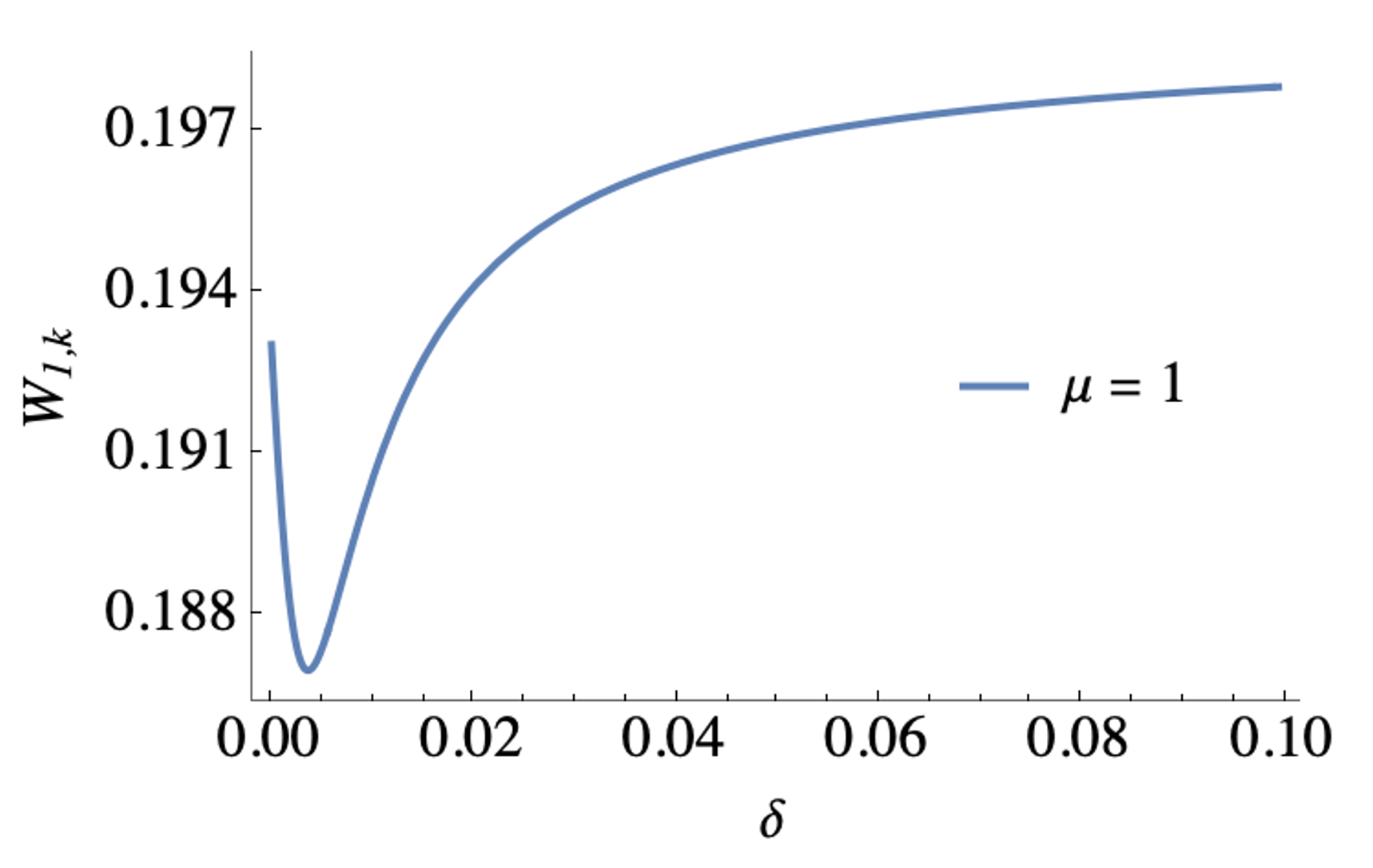}
    \caption{}
  \end{subfigure}
\caption{(a) Plot of the coefficient $W_{1,\ell k}$ \eqref{eq:W_reduced} for $\mu\in[0,10]$ for $\lambda_k=\lambda_1$, $\omega=32\pi$, and five different fixed values of $\delta$. (b) Plot of $W_{1,\ell k}$ for $\delta\in[0,0.1]$ for $\lambda_k=\lambda_1$, $\omega=32\pi$, and $\mu=1$.}
\label{fig:W1k_expr}
\end{figure}

Compared to $W_{2,\ell k}$, the coefficient $W_{1,\ell k}$, in addition to being significantly larger in magnitude, displays much more interesting non-monotonic behavior. For large fixed $\delta\gtrsim 1$, the coefficient $W_{1,\ell k}$ is monotone increasing in $\mu$, whereas for smaller $\delta\sim\frac{1}{\omega}$, the coefficient is initially decreasing for small $\mu$ and then increasing for large $\mu$. For all values of $\delta$, the coefficient appears to approach the value 0.2 asymptotically as $\mu\to\infty$. For fixed $\mu$, we additionally see non-monotonic behavior in $\delta$ for very small $\delta\sim\frac{1}{\omega}$.  \\

The behavior of the coefficient $W_{1,\ell k}$ in Figure \ref{fig:W1k_expr} prompts us to make the following predictions about the viscoelastic swimmer behavior.
\begin{enumerate}
\item[a.] For large $\delta\gtrsim 1$, the viscoelastic swimmer will swim faster as $\mu$ is increased. For fixed $\mu>0$, we will again see very little change in the swimming speed as $\delta\gtrsim 1$ is increased. 

\item[b.] For small $\delta\sim\frac{1}{\omega}$ and small $\mu>0$, we may expect the viscoelastic swimmer to be slower than both the Newtonian ($\delta=0$) and $\delta\gtrsim 1$ swimmers. As $\mu$ increases, we may expect to see the viscoelastic swimmer catch back up to the Newtonian swimmer and eventually surpass it as $\mu$ continues to increase. 
\end{enumerate}

\begin{figure}[!ht]
\centering
    \includegraphics[scale=0.18]{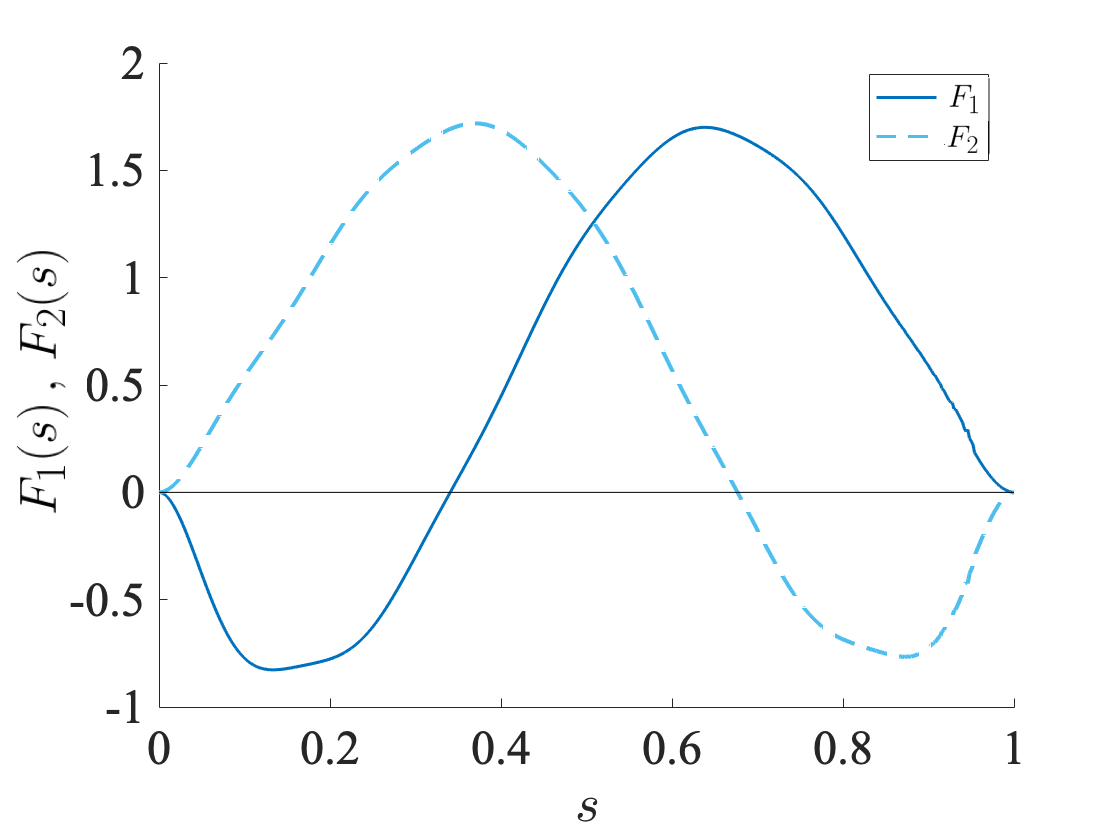}
\caption{The form of $F_1$ and $F_2$ in the preferred curvature \eqref{eq:kappa0} used in numerical tests. This $F_1$ and $F_2$ came from a small numerical optimization of the Newtonian swimming speed \eqref{eq:NewtSpeed2} in \cite{RFTpaper}. }
\label{fig:GoodForcing}
\end{figure}

To test our predictions, we use the preferred curvature components $F_1$ and $F_2$ pictured in Figure \ref{fig:GoodForcing}. These $F_1$ and $F_2$ were computed in \cite{RFTpaper} as the `optimal' preferred curvature $\kappa_0$ of the form \eqref{eq:kappa0} resulting in the greatest average swimming speed \eqref{eq:NewtSpeed2} in the Newtonian setting. The optimization of \eqref{eq:NewtSpeed2} was performed over the first 12 spatial modes $k$ of $\kappa_0$ and thus may not exactly represent the true optimal $\kappa_0$ for the Newtonian swimmer. However, we note that in the Newtonian setting, the combination of $F_1$ and $F_2$ plotted in Figure \ref{fig:GoodForcing} does outperform the classical traveling wave forcing $F_1=\sin(\omega s)$, $F_2=\cos(\omega s)$.\\

As before, we take $\omega=32\pi$ and simulate the swimmer until $t=2$. The swimmer begins as a straight line along the $x$-axis from $x=0$ to $x=1$. Again, we keep track of the displacement of the swimmer $\int_0^1 \X(s,t_2)\,ds-\int_0^1 \X(s,t_1)\,ds$ between times $t_1=1$ and $t_2=2$.\\

\begin{figure}[!ht]
\centering
  \begin{subfigure}[b]{0.45\textwidth}
    \includegraphics[scale=0.2]{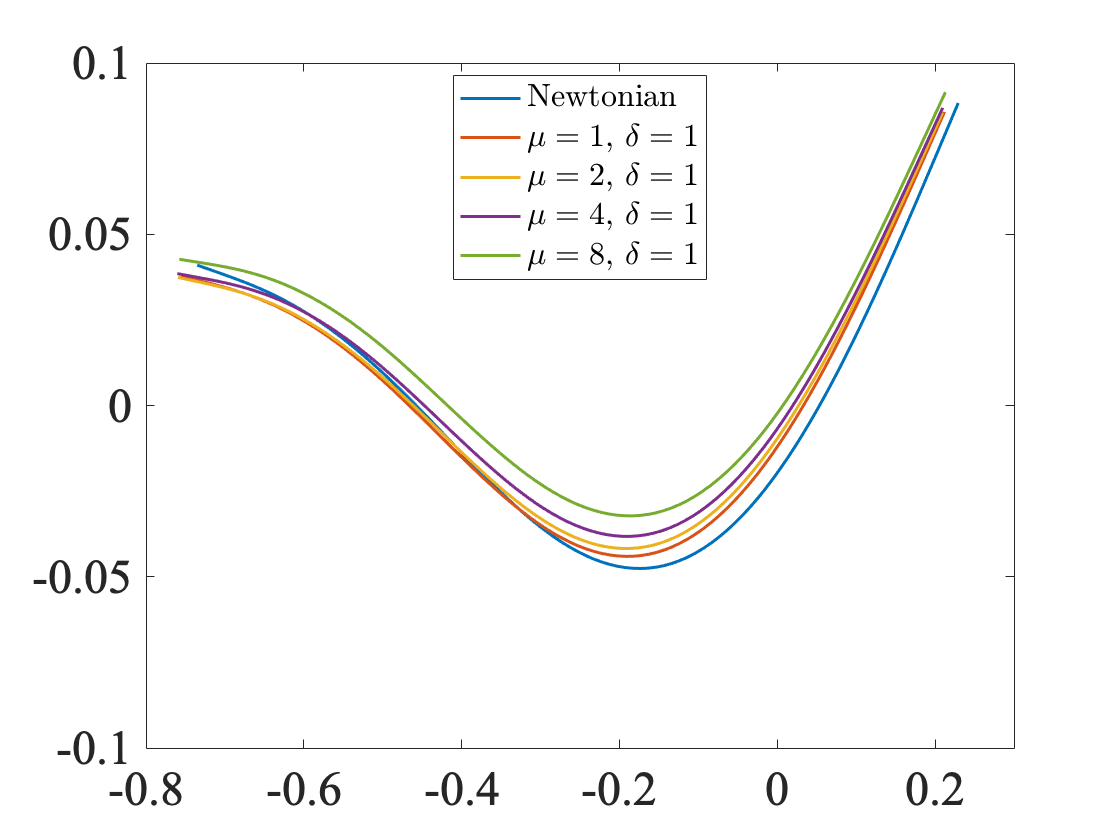}
    \caption{}
  \end{subfigure} 
  \begin{subfigure}[b]{0.45\textwidth}
    \includegraphics[scale=0.2]{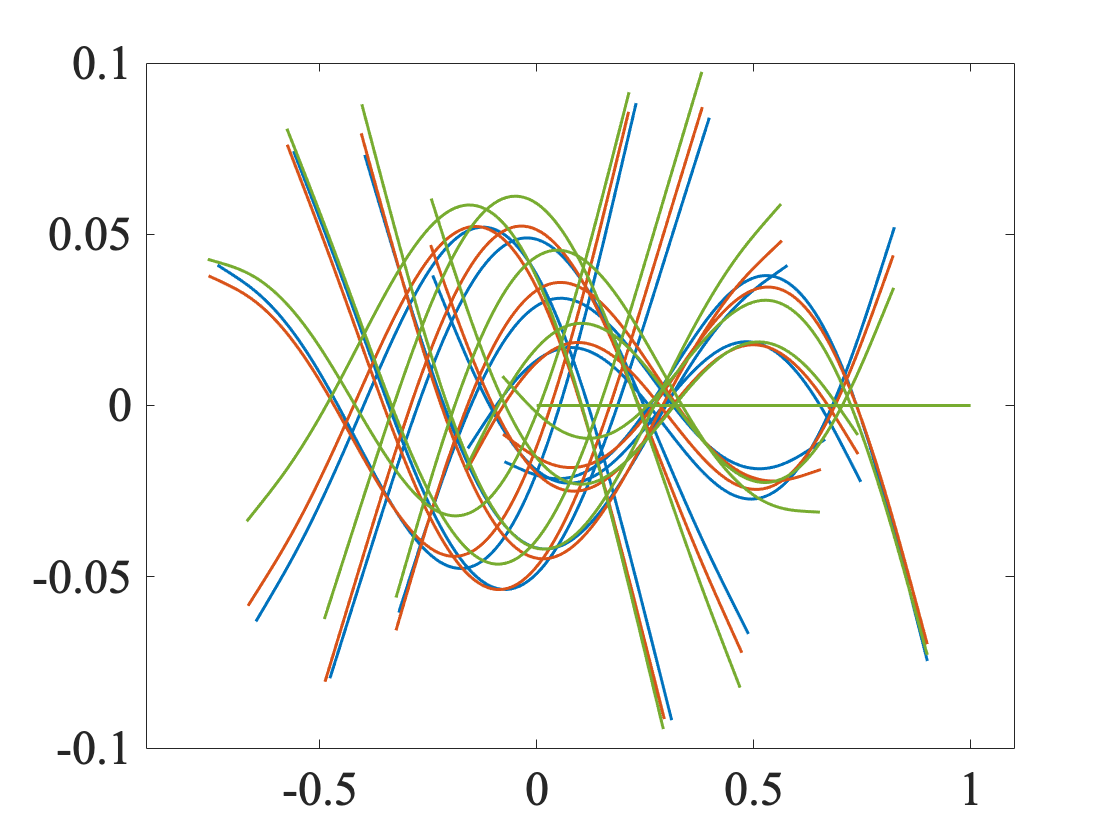}
    \caption{}
  \end{subfigure}
\caption{(a) Location of the fiber at time $t=2$ for fixed $\delta=1$ and five different values of $\mu$. All swimmers swim roughly the same distance, but note that the Newtonian swimmer (blue) is slightly slower. This is opposite from the viscoelastic effects pictured in Figure \ref{fig:BadSwim_BigDelta}. (b) Comparison of swimmer shapes at ten different snapshots in time for the Newtonian (blue), $\mu=\delta=1$ (orange), and $\mu=8,\delta=1$ (green) swimmers.  }
\label{fig:GoodSwim_BigDelta}
\end{figure}

For the case $\delta\gtrsim1$, we again start by fixing $\delta=1$ and compare the fiber displacement for 5 values of $\mu$ (see Figure \ref{fig:GoodSwim_BigDelta}a). In both the viscoelastic and Newtonian settings, the swimmers swim much further than in the case $F_1=F_2$ above, and the differences among their displacements is much smaller. However, we note that the Newtonian swimmer ($\mu=0$) has the smallest displacement between $t=1$ and $t=2$ of $-0.380$. This may be compared with each of the $\mu=1,2,4,8$ swimmers, which have a displacement of $-0.390$, $-0.391$, $-0.392$, and $-0.390$, respectively. Besides the jump in swimming speed between the Newtonian swimmer ($\mu=0$) and $\mu=1$, there is not much difference in displacement among different values of $\mu$, which is not surprising given the shape of the plot of $W_{1,\ell k }$ for $k=1$ (Figure \ref{fig:W1k_expr}a).
A similar result holds when $\mu=1$ is fixed and $\delta=1,2,4,8$ is varied. The displacement in each of these cases is $-0.390$. \\

\begin{figure}[!ht]
\centering
  \begin{subfigure}[b]{0.45\textwidth}
    \includegraphics[scale=0.19]{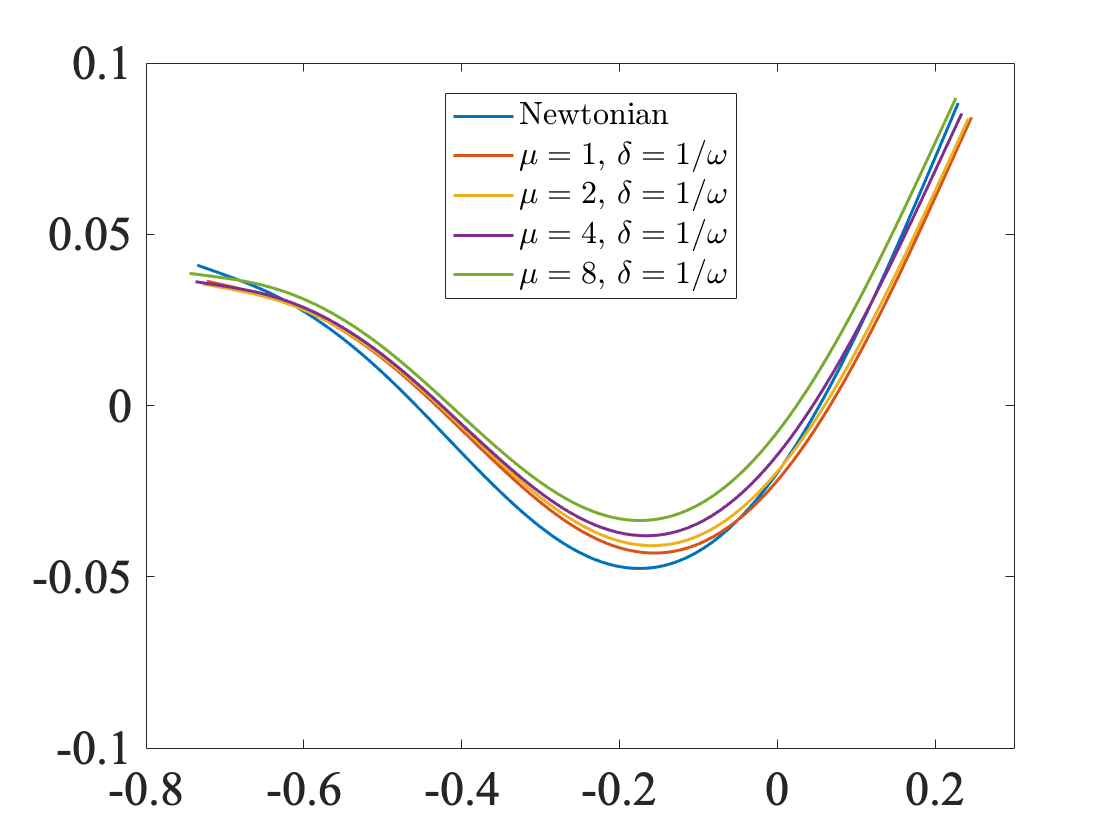}
    \caption{}
  \end{subfigure} 
  \begin{subfigure}[b]{0.45\textwidth}
    \includegraphics[scale=0.19]{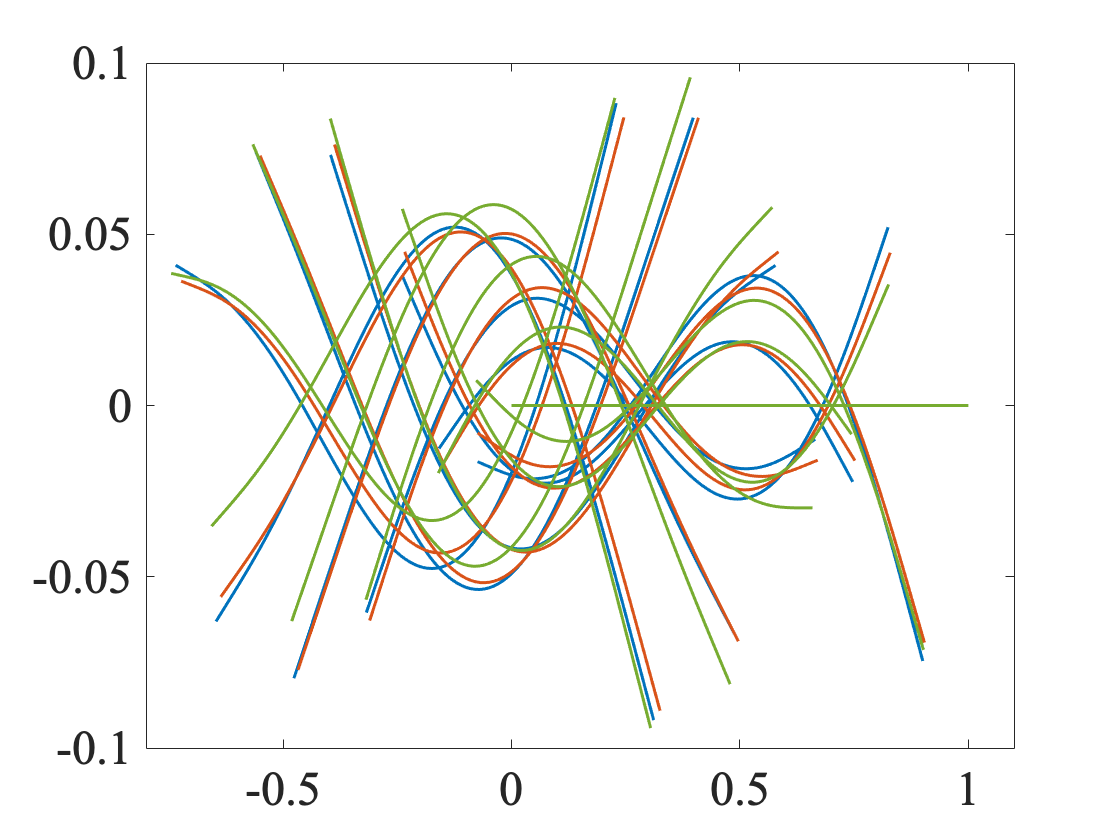}
    \caption{}
  \end{subfigure}
\caption{(a) Location of the fiber at time $t=2$ for fixed $\delta=1/\omega$ and five different values of $\mu$. Again, swimmers swim roughly the same distance, but now the Newtonian swimmer (blue) is faster than the $\mu=1,2$ viscoelastic swimmers and slower than the $\mu=4,8$ swimmers. (b) Comparison of swimmer shapes at ten different snapshots in time for the Newtonian (blue), $\mu=\delta=1$ (orange), and $\mu=8,\delta=1$ (green) swimmers.  }
\label{fig:GoodSwim_SmallDelta}
\end{figure}

As expected, the effect of varying $\mu$ is a bit more interesting at small $\delta$. Fixing $\delta=1/\omega$, we consider $\mu=0,1,2,4,8$. Recalling that the displacement of the Newtonian swimmer from $t=1$ to $t=2$ was $-0.380$, we note that for $\mu=1,2,4,8$, the swimmer's displacement was $-0.373$, $-0.375$, $-0.380$, and $-0.384$, respectively (see Figure \ref{fig:GoodSwim_SmallDelta}). The effect of varying $\mu$ is still relatively small, but more complex than at large $\delta$, varying from a slight inhibition of the swimming speed at smaller $\mu$ to a slight enhancement of the swimming speed at larger $\mu$. This behavior aligns with the predictions of Figure \ref{fig:W1k_expr}a.

\subsection{Discussion}
The numerical tests performed in this section cover a very small portion of the possible parameter space, and indeed an even smaller portion of the possible forcing functions $\kappa_0$. We hope, however, that the tests included here serve to emphasize the complexity of possible behaviors in this model over just a small range of the possible options. We believe this justifies studying the model \eqref{eq:VEoriginal1}-\eqref{eq:BCs_original} in more detail, and hopefully provides convincing evidence that linear viscoelasticity can have an interesting effect on small-amplitude undulatory swimming. \\

Our numerical experiments do not consider the possible effects of including higher modes in the forcing $\kappa_0(s,t)$ in both time and space, as we use only the temporal mode $m=1$ for all simulations and consider coefficients $F_1(s)$ and $F_2(s)$ mostly supported in a few low spatial modes. 
In the Newtonian setting, the $m=1$ mode in time and the $k=1,2$ modes in space result in the fastest swimming speed for a fixed bending energy $\norm{\kappa_0}_{L^2(I)}$ (see \cite[section 4.1]{RFTpaper}), while higher modes contribute less to the overall discplacement of the filament. We anticipate that the story is similar in the linear viscoelastic setting, which is why we choose to simulate only low modes. The effects of higher modes may be very different in a nonlinearly viscoelastic fluid environment and may perhaps contribute more to the overall swimming speed. It would be interesting to consider nonlinear viscoelastic effects of the surrounding fluid, although it is not immediately clear how to incorporate such effects into the reduced curve evolution model \eqref{eq:VEoriginal1}-\eqref{eq:BCs_original} in a physically meaningful way. The absence of such a reduced model would make the analysis much more challenging. For computational results on swimming filaments coupled with a bulk nonlinear viscoelastic fluid via the immersed boundary method, see \cite{li2017flagellar,thomases2014mechanisms,thomases2017role}. \\

Finally, we note that, while only planar deformations are considered in this paper, non-planar motions are an important consideration for real microswimmers. 
The PDE analysis of the model \eqref{eq:VEoriginal1}-\eqref{eq:BCs_original} for fully 3D centerline deformations is essentially the same as in 2D: using a Bishop frame \cite{bishop1975there} to parameterize the curve, we would need to consider the evolution of two curvature components $\kappa_1(s,t)$ and $\kappa_2(s,t)$ according to similar equations to \eqref{eq:kappadot}. 
However, the effects of 3D motions on the swimming speed could be much more complex. It is unclear whether, for a given bending energy, the filament can swim faster if it is allowed to deform out of plane than if it is confined to the plane. This is an important question in the Newtonian setting as well as the viscoelastic setting and merits further exploration. 
\\

The remainder of this paper is devoted to proving Theorems \ref{thm:wellposed}, \ref{thm:per_deb}, and \ref{thm:VEswimming} regarding the PDE behavior of the model \eqref{eq:VEoriginal1}-\eqref{eq:BCs_original}.


\section{Well-posedness and periodic solutions}\label{sec:wellpo}
In this section we prove Theorems \ref{thm:wellposed} and \ref{thm:per_deb}. We start by showing some preliminary bounds in sections \ref{subsec:semigrp} and \ref{subsec:tension}, and then proceed to the proof of Theorem \ref{thm:wellposed} in section \ref{subsec:evolution}. Sections \ref{subsec:periodic} and \ref{subsec:deborah} contain the proof of Theorem \ref{thm:per_deb}.

\subsection{Semigroup properties}\label{subsec:semigrp}
We begin by deriving the following estimates for the semigroup generated by the linear operator $\mc{A}$, given by \eqref{eq:Aop}. 

\begin{lemma}\label{lem:semigrp}
For any $(u,\phi)^{\rm T}\in L^2(I)\times L^2(I)$, for $0\le m+j\le4$, we have
\begin{align}
\norm{e^{t\mc{A}}\begin{pmatrix}
\p_s^ju\\
\p_s^j\phi
\end{pmatrix}}_{\dot H^m(I)\times \dot H^m(I)}
&\le 
c\,\max\{t^{-(m+j)/4},1\}\,e^{-t\Lambda} \norm{\begin{pmatrix}
 u\\
 \phi
 \end{pmatrix} }_{L^2(I)\times L^2(I)} \,, \label{est:semigrp1}
\end{align} 
where $\Lambda=\min\{\lambda_1,\frac{1}{\delta(1+\mu)}\}$ for $\lambda_1$ as in \eqref{eq:L_eigs}, and the constant $c$ is independent of $\delta$.
\end{lemma}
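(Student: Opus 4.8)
The plan is to diagonalize the operator $\mc{A}$ in the eigenbasis $\{\psi_k\}$ of $\mc{L}$. Since $\mc{A}$ acts as the $2\times2$ matrix
$A_k = \begin{pmatrix} -(1+\mu)\lambda_k & \mu\lambda_k \\ \delta^{-1} & -\delta^{-1}\end{pmatrix}$
on the $k$-th mode, and $e^{t\mc{A}}$ acts as $e^{tA_k}$ componentwise, I would first expand $(u,\phi)$ in the basis $\psi_k$, write
$e^{t\mc{A}}(u,\phi)^{\rm T} = \sum_k e^{tA_k}(\wt u_k,\wt\phi_k)^{\rm T}\psi_k$,
and reduce the claimed bound to the pointwise estimate on the matrix exponential: for every $k$ and $0\le n\le 4$,
$\lambda_k^{n/4}\,\norm{e^{tA_k}} \le c\,\max\{t^{-n/4},1\}\,e^{-t\Lambda}$,
with $n=m+j$; indeed $\norm{e^{t\mc A}(\p_s^j u,\p_s^j\phi)^{\rm T}}_{\dot H^m\times\dot H^m}^2$ is controlled by $\sum_k\lambda_k^{(m+j)/2}\norm{e^{tA_k}}^2(\wt u_k^2+\wt\phi_k^2)$, so the scalar bound transfers directly.

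The core is therefore estimating $e^{tA_k}$. I would compute the eigenvalues $\nu_k^\pm$ of $A_k$ as roots of $\nu^2 + (\delta^{-1}+(1+\mu)\lambda_k)\nu + \delta^{-1}\lambda_k = 0$. Both roots are real and negative (the trace is negative, the determinant $\delta^{-1}\lambda_k>0$, and the discriminant is positive since $\delta^{-1}+(1+\mu)\lambda_k > 2\sqrt{\delta^{-1}\lambda_k}$ whenever $\mu\ge0$, by AM-GM on $\delta^{-1}$ and $\lambda_k$ up to the factor $1+\mu$). The key quantitative claims are: (i) $\max\{\nu_k^+,\nu_k^-\} \le -\Lambda$ with $\Lambda=\min\{\lambda_1,\frac1{\delta(1+\mu)}\}$, which I would get by evaluating the quadratic at $\nu=-\Lambda$ and checking the sign, using $\lambda_k\ge\lambda_1$; this gives the exponential decay factor $e^{-t\Lambda}$ after bounding the projectors. (ii) A bound on $\norm{e^{tA_k}}$ of order $\max\{(\lambda_k t)^{-n/4},1\}$ for the weighted norm; the larger root behaves like $-\delta^{-1}$ (bounded, $\delta$-dependent but the decay is absorbed into $e^{-t\Lambda}$ so only a constant remains) while the smaller root behaves like $-(1+\mu)\lambda_k$ for large $\lambda_k$, and it is the $e^{t\nu_k^-}$ branch together with the eigenprojector norms that produces the smoothing factor $\lambda_k^{n/4}e^{t\nu_k^-}\le c\,t^{-n/4}$ via $\sup_{x>0} x^{n/4}e^{-x}\le c$. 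The one subtlety is controlling the spectral projectors $P_k^\pm$, whose norms involve $1/(\nu_k^+-\nu_k^-)$; since $\nu_k^+-\nu_k^-=\sqrt{(\delta^{-1}+(1+\mu)\lambda_k)^2-4\delta^{-1}\lambda_k}$, this gap does not degenerate and the projector entries can be bounded by expressions that, when multiplied by $e^{t\nu_k^\pm}$, stay bounded by $c\,e^{-t\Lambda}$ (off-diagonal entries carry factors like $\mu\lambda_k$ or $\delta^{-1}$ over the gap, which are $O(1)$ times appropriate powers that recombine correctly — worth checking carefully that no hidden $\delta$-dependence survives, since the lemma asserts $c$ is independent of $\delta$).

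The main obstacle I anticipate is exactly this last point: showing the constant $c$ is uniform in $\delta$. When $\delta$ is small, $A_k$ is a stiff matrix (one eigenvalue near $-\delta^{-1}$), and one must verify that although $e^{t\nu_k^+}$ decays extremely fast, the projector norms do not blow up like $\delta^{-1}$ in a way that survives. The resolution should be that the fast-decaying mode's contribution is $\norm{P_k^+}\,e^{t\nu_k^+}$, and any large prefactor in $\norm{P_k^+}$ is killed by integrating/maximizing against the fast exponential — concretely one uses $\sup_{t\ge0}(\delta^{-1})^a t^b e^{-t/\delta}\le c$ type bounds after extracting the uniform $e^{-t\Lambda}$. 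I would organize the estimate into the two regimes $\lambda_k \lesssim \delta^{-1}$ and $\lambda_k \gtrsim \delta^{-1}$ (equivalently, which root dominates), handle each by the above asymptotics for $\nu_k^\pm$ and the projectors, and then recombine. An alternative, possibly cleaner, route is a contour-integral representation $e^{tA_k}=\frac1{2\pi i}\oint e^{t z}(z-A_k)^{-1}dz$ with a $\delta$-uniform sectorial estimate on $(z-A_k)^{-1}$, but the explicit $2\times2$ diagonalization is concrete enough that I would carry it out directly.
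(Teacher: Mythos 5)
Your framework — expansion in the $\mathcal L$-eigenbasis, reduction to the $2\times 2$ matrix $A_k$, explicit eigenvalues, bounding the spectral decomposition and checking $\delta$-uniformity — is exactly the paper's strategy for the case $j=0$, and your discussion of the $\delta$-uniformity subtlety (the spectral projectors involve $1/(\nu_k^+ - \nu_k^-)$, and the dangerous factors are killed by the fast exponential $e^{t\nu_k^+}$) matches what the paper verifies via the coefficient bounds \eqref{eq:A_coeffbds}.

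However, the reduction step you assert for $j>0$ contains a genuine gap. You claim
$\norm{e^{t\mathcal A}(\partial_s^j u, \partial_s^j\phi)^{\rm T}}_{\dot H^m\times\dot H^m}^2$
is controlled by $\sum_k \lambda_k^{(m+j)/2}\norm{e^{tA_k}}^2(\wt u_k^2 + \wt\phi_k^2)$, i.e.\ that taking $\partial_s^j$ of the \emph{input} data simply multiplies the eigenfunction coefficients by $\lambda_k^{j/4}$. This is false: the $\psi_k$ satisfy clamped boundary conditions, so $\partial_s^j\psi_k$ is not again an eigenfunction, and the $\psi_k$-coefficients of $\partial_s^j u$ are not $\lambda_k^{j/4}\wt u_k$ (indeed, for $u\in L^2$ and $j>0$, $\partial_s^j u$ need not even be in $L^2$, so this coefficient sum need not converge). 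The factor $\lambda_k^{m/4}$ for the \emph{output} $\dot H^m$ norm is fine, because the semigroup maps into $D(\mathcal L^{m/4})\subseteq H^m$; but the factor $\lambda_k^{j/4}$ for derivatives on the input is not. The paper circumvents this with a duality argument: $\norm{e^{t\mathcal A}(\partial_s^j u,\partial_s^j\phi)^{\rm T}}_{L^2\times L^2}$ is rewritten as $\sup\big(\partial_s^j e^{t\mathcal A^*}(w,\varphi)^{\rm T},(u,\phi)^{\rm T}\big)$, which moves $\partial_s^j$ onto the adjoint semigroup's \emph{output}; since $e^{t\mathcal A^*}(w,\varphi)^{\rm T}\in D(\mathcal L^{j/4})$, its $\dot H^j$ norm is legitimately controlled by $\lambda_k^{j/4}$. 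One then repeats the $j=0$-type spectral estimate for $\wt{\mathcal A^*}_k$ (whose eigenvectors differ from those of $\wt{\mathcal A}_k$ by factors involving $\mu\delta\lambda_k$, so the $\delta$-uniform coefficient bounds have to be re-derived, as the paper does in \eqref{eq:Astar_ests}). Without this duality step your argument does not close for $j\ge 1$.
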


As a consequence of Lemma \ref{lem:semigrp}, we may also show the following small time estimate, which relies on approximating $(u,\phi)^{\rm T}\in L^2(I)\times L^2(I)$ by functions in $D(\mc{L}^r)\times D(\mc{L}^r)$, $0<r\le 1$. 
\begin{lemma}\label{lem:smalltime}
Fix $(u,\phi)^{\rm T}\in L^2(I)\times L^2(I)$ and let $0<r\le1$ and $\varepsilon>0$. There exists $T_\varepsilon>0$ depending on $u$ and $\phi$ such that
\begin{equation}\label{est:smalltime}
\sup_{t\in[0,T_\varepsilon]}\min\{t^r,1\}e^{t\Lambda}\norm{e^{t\mc{A}}\begin{pmatrix}
u\\
\phi
\end{pmatrix}}_{\dot H^{4r}(I)\times \dot H^{4r}(I)}
\le \varepsilon\,.
\end{equation} 
\end{lemma}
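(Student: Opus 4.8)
The plan is to exploit Lemma \ref{lem:semigrp} together with a density argument. The key point is that Lemma \ref{lem:semigrp} gives the bound $\|e^{t\mc{A}}(u,\phi)^{\rm T}\|_{\dot H^{4r}\times\dot H^{4r}}\le c\,t^{-r}e^{-t\Lambda}\|(u,\phi)^{\rm T}\|_{L^2\times L^2}$, which controls the quantity $\min\{t^r,1\}e^{t\Lambda}\|e^{t\mc{A}}(u,\phi)^{\rm T}\|_{\dot H^{4r}\times\dot H^{4r}}$ \emph{uniformly} in $t$ by $c\|(u,\phi)^{\rm T}\|_{L^2\times L^2}$, but not by an arbitrarily small $\varepsilon$. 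To gain the smallness, first I would approximate: given $\varepsilon>0$, pick $(v,\varphi)^{\rm T}\in D(\mc{L}^r)\times D(\mc{L}^r)$ with $\|(u,\phi)^{\rm T}-(v,\varphi)^{\rm T}\|_{L^2\times L^2}\le \varepsilon/(2c)$, using that $D(\mc{L}^r)$ is dense in $L^2(I)$ (clear from the eigenfunction expansion \eqref{eq:wt_expand}, since finite linear combinations of $\psi_k$ lie in $D(\mc{L}^r)$ for every $r$ and are dense). Then split
\begin{align*}
e^{t\mc{A}}\begin{pmatrix} u\\ \phi\end{pmatrix} = e^{t\mc{A}}\begin{pmatrix} v\\ \varphi\end{pmatrix} + e^{t\mc{A}}\begin{pmatrix} u-v\\ \phi-\varphi\end{pmatrix}.
\end{align*}

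For the second (remainder) term, apply Lemma \ref{lem:semigrp} with $m=4r$, $j=0$: its contribution to $\min\{t^r,1\}e^{t\Lambda}\|\cdot\|_{\dot H^{4r}\times\dot H^{4r}}$ is at most $c\,\|(u-v,\phi-\varphi)^{\rm T}\|_{L^2\times L^2}\le\varepsilon/2$, uniformly for all $t>0$. For the first (smooth) term, I would write $(v,\varphi)^{\rm T}$ as $(\mc{L}^{-r}\mc{L}^r v,\,\mc{L}^{-r}\mc{L}^r\varphi)^{\rm T}$ — more concretely, note that each of $v,\varphi\in D(\mc{L}^r)$ can be approximated in the $D(\mc{L}^r)$-graph norm by finite combinations of eigenfunctions, so WLOG (after absorbing another $\varepsilon/4$ error via the remainder estimate) $v,\varphi$ are finite combinations of $\psi_k$, hence lie in $D(\mc{L})$ and in particular in $D(\mc{L}^{1})\subset \dot H^{4r}$ for $r\le 1$. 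Then $e^{t\mc{A}}(v,\varphi)^{\rm T}$ is continuous in $t$ at $t=0$ with value $(v,\varphi)^{\rm T}\in\dot H^{4r}\times\dot H^{4r}$ (the semigroup is strongly continuous on $D(\mc{L}^r)\times D(\mc{L}^r)$, which again one reads off from the eigenfunction diagonalization of $\mc{A}$ — on each two-dimensional spectral block the semigroup is an ordinary matrix exponential), so $\min\{t^r,1\}e^{t\Lambda}\|e^{t\mc{A}}(v,\varphi)^{\rm T}\|_{\dot H^{4r}\times\dot H^{4r}}\to 0$ as $t\to 0^+$ because of the prefactor $\min\{t^r,1\}\to 0$ while the norm stays bounded. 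Hence there is $T_\varepsilon>0$ (depending on $v,\varphi$, and thus on $u,\phi$ and the choice of approximant) with this term $\le\varepsilon/2$ on $[0,T_\varepsilon]$.

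Combining the two bounds on $[0,T_\varepsilon]$ gives \eqref{est:smalltime}. The main obstacle — and it is a mild one — is making sure the decomposition is done in the right order: the remainder estimate from Lemma \ref{lem:semigrp} must be uniform in $t$ (it is, since $\min\{t^r,1\}\,t^{-r}\le 1$ and $e^{t\Lambda}e^{-t\Lambda}=1$), and the smooth part must genuinely vanish as $t\to 0^+$, which requires $(v,\varphi)^{\rm T}$ to actually lie in $\dot H^{4r}\times\dot H^{4r}$ with continuous dependence — this is exactly where one uses $(v,\varphi)^{\rm T}\in D(\mc{L}^r)\times D(\mc{L}^r)$ and the fact that $D(\mc{L}^r)\subseteq H^{4r}(I)$ noted after \eqref{eq:domainLr}, together with compatibility of the boundary conditions. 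No cancellation or delicate estimate beyond Lemma \ref{lem:semigrp} is needed; this is a soft approximation argument.
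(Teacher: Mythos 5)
Your proposal is correct and takes essentially the same approach as the paper: the paper's proof also approximates $(u,\phi)$ in $L^2\times L^2$ by the truncated eigenfunction expansions $(u_n,\phi_n)=\sum_{k\le n}(\wt u_k,\wt\phi_k)\psi_k\in D(\mc{L}^r)\times D(\mc{L}^r)$, controls the remainder $e^{t\mc{A}}(u-u_n,\phi-\phi_n)^{\rm T}$ uniformly in $t$ via Lemma \ref{lem:semigrp}, and observes that $\min\{t^r,1\}e^{t\Lambda}\|e^{t\mc{A}}(u_n,\phi_n)^{\rm T}\|_{\dot H^{4r}\times\dot H^{4r}}\le c_n\min\{t^r,1\}\to 0$, then takes $n$ large followed by $t$ small. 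The only cosmetic difference is that the paper tracks an explicit $n$-dependent constant $c_n$ rather than appealing to strong continuity of the semigroup at $t=0$ on the smooth part.
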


\begin{proof}[Proof of Lemma \ref{lem:semigrp}]
For $(w,\varphi)^{\rm T}\in D(\mc{L})\times D(\mc{L})$, the eigenfunction expansion of $\mc{A} \begin{pmatrix}
w\\
\varphi
\end{pmatrix}$ (see \eqref{eq:wt_expand}) may be written as $\sum_{k=1}^\infty
\wt{\mc{A}}_k \begin{pmatrix}
\wt w_k\\
\wt\varphi_k
\end{pmatrix}\psi_k$, where 
\begin{equation}\label{eq:Atildek}
\wt{\mc{A}}_k=\begin{pmatrix}
-(1+\mu)\lambda_k & \mu\lambda_k \\
\delta^{-1} & -\delta^{-1}
\end{pmatrix}\,.
\end{equation}

We study the properties of $\wt{\mc A}_k$. The eigenvalues of $\wt{\mc A}_k$ are given by
\begin{equation}\label{eq:A_evals}
\nu_k^\pm = \frac{1}{2\delta}\bigg(-\big(1+(1+\mu)\delta\lambda_k\big)\pm\sqrt{\big(1+(1+\mu)\delta\lambda_k\big)^2-4\delta\lambda_k} \bigg) \,,
\end{equation}\label{eq:A_evecs}
with corresponding eigenvectors
\begin{equation}
\bv_k^{-} = \begin{pmatrix}
1 \\
\frac{1}{1+\delta \nu_k^{-}}
\end{pmatrix} \,, \quad
\bv_k^{+} = \begin{pmatrix}
1+\delta \nu_k^{+}\\
1
\end{pmatrix}  \,.
\end{equation}
We note in particular that $\delta\nu_k^+\to-\frac{1}{1+\mu}$ monotonically as $k\to\infty$, while $\lambda_k^{-1}\delta\nu_k^{-}\to -\delta(1+\mu)$ as $k\to\infty$. 
We may decompose the unit vectors $(1,0)^{\rm T}$ and $(0,1)^{\rm T}$ in terms of the eigenvectors $\bv_k^\pm$ of $\wt{\mc{A}}_k$ as: 
\begin{equation}\label{eq:A_decomp}
\begin{aligned}
\begin{pmatrix}
1\\
0
\end{pmatrix} &= a_k^{-}\bv^{-}_k + a_k^{+}\bv^{+}_k\,,\qquad  a_k^{-}=-\frac{1+\delta \nu_k^{-}}{\delta(\nu_k^{+}-\nu_k^{-})} \,; \quad a_k^{+}=\frac{1}{\delta(\nu_k^{+}-\nu_k^{-})}\,, \\
\begin{pmatrix}
0\\
1
\end{pmatrix} &= b_k^{-}\bv_k^{-} + b_k^{+}\bv_k^{+}\,,\qquad  b_k^{-}=\frac{(1+\delta \nu_k^{+})(1+\delta \nu_k^{-})}{\delta(\nu_k^{+}-\nu_k^{-})}\, ; \quad b_k^{+}=-\frac{1+\delta \nu_k^{-}}{\delta(\nu_k^{+}-\nu_k^{-})}\,.
\end{aligned}
\end{equation}
Noting that
\begin{align*}
\delta(\nu_k^{+}-\nu_k^{-}) = \sqrt{(1+(1+\mu)\delta\lambda_k)^2-4\delta\lambda_k} \,,
\end{align*}
we have that there exist constants $c$ independent of both $k$ and $\delta$ such that 
\begin{equation}\label{eq:A_coeffbds}
\abs{a_k^{-}\bv_k^-} \le c\,,  \quad \abs{b_k^{-}\bv_k^-} \le c\,, \quad \text{and}
\quad (\delta\lambda_k)^r\abs{a_k^{+}\bv_k^+}\le c\,, \quad (\delta\lambda_k)^r\abs{b_k^{+}\bv_k^+}\le c\,, \quad 0\le r\le 1\,.
\end{equation}
Using the decomposition \eqref{eq:A_decomp} and the bounds \eqref{eq:A_coeffbds}, for any $0\le r\le 1$, we may estimate
\begin{align*}
\norm{\mc{L}^r e^{t\mc{A}}\begin{pmatrix}
u\\
0
\end{pmatrix}}_{L^2\times L^2} &= 
\norm{\sum_{k=1}^\infty\lambda_k^r e^{t\wt{\mc{A}}_k}\begin{pmatrix}
\wt u_k\psi_k\\
0
\end{pmatrix}}_{L^2\times L^2} 
= \norm{\sum_{k=1}^\infty\lambda_k^r \bigg(a_k^{-}\bv_k^{-}e^{t\nu_k^{-}}+a_k^{+}\bv_k^{+}e^{t\nu_k^{+}} \bigg)\wt{u}_k\psi_k}_{L^2\times L^2} \\
&\le c\,\sup_k\bigg(\lambda_k^r\,e^{t\nu_k^{-}}+\delta^{-r}\,e^{t\nu_k^{+}}\bigg)\norm{\sum_{k=1}^\infty\wt{u}_k\psi_k}_{L^2} \\
&\le c\,\bigg(\sup_k\bigg(\lambda_k^r\,e^{-t(\lambda_k-\lambda_1)}\bigg)e^{-t\lambda_1}+\delta^{-r}\,e^{-t/(\delta(1+\mu))}\bigg)\norm{u}_{L^2} \\
&\le c\,\max\{t^{-r},1\}\bigg(e^{-t\lambda_1}+e^{-t/(\delta(1+\mu))}\bigg)\norm{u}_{L^2} \,.
\end{align*}
By an analogous series of estimates, we may also show
\begin{align*}
\norm{\mc{L}^r e^{t\mc{A}}\begin{pmatrix}
0\\
\phi
\end{pmatrix}}_{L^2\times L^2} &= 
\norm{\sum_{k=1}^\infty\lambda_k^r e^{t\wt{\mc{A}}_k}\begin{pmatrix}
0 \\
\wt{\phi}_k\psi_k
\end{pmatrix}}_{L^2\times L^2} 
= \norm{\sum_{k=1}^\infty \bigg(b_k^{-}\bv_k^{-}e^{\nu_k^{-}t}+b_k^{+}\bv_k^{+}e^{\nu_k^{+}t} \bigg)\lambda_k^r\wt{\phi}_k\psi_k}_{L^2\times L^2} \\
&\le c\,\max\{t^{-r},1\}\bigg(e^{-t\lambda_1}+e^{-t/(\delta(1+\mu))}\bigg)\norm{\phi}_{L^2}\,.
\end{align*}
Recalling that $D(\mc{L}^r)\subseteq H^{4r}$ for each $0\le r\le1$ by \eqref{eq:domainLr}, we obtain estimate \eqref{est:semigrp1} for $j=0$. \\

For $0<j\le 4$, we proceed by a duality argument as in \cite{RFTpaper}. 
 In particular, for $(u,\phi),(w,\varphi)\in C_c^\infty(I)\times C_c^\infty(I)$, we have 
\begin{align*}
\sup_{\norm{(u,\phi)}_{L^2\times L^2}=1}\norm{e^{t\mc{A}}\begin{pmatrix}
\p_s^ju\\
\p_s^j\phi
 \end{pmatrix}}_{L^2\times L^2} &= 
 \sup_{\norm{(u,\phi)}_{L^2\times L^2}=\norm{(w,\varphi)}_{L^2\times L^2}=1}\bigg(\begin{pmatrix}
 w\\
 \varphi
 \end{pmatrix},
e^{t\mc{A}}\begin{pmatrix}
 \p_s^ju\\
 \p_s^j\phi
 \end{pmatrix} \bigg)_{L^2\times L^2} \\
 &=\sup_{\norm{(u,\phi)}_{L^2\times L^2}=\norm{(w,\varphi)}_{L^2\times L^2}=1}\bigg( \p_s^je^{t\mc{A}^*}\begin{pmatrix}
 w\\
 \varphi
 \end{pmatrix},
\begin{pmatrix}
u\\
\phi
 \end{pmatrix} \bigg)_{L^2\times L^2} \\
 &\le \sup_{\norm{(w,\varphi)}_{L^2\times L^2}=1}\norm{\p_s^je^{t\mc{A}^*}\begin{pmatrix}
 w\\
 \varphi
 \end{pmatrix} }_{L^2\times L^2} \,,
%
\end{align*}
where we are using the notation 
\begin{align*}
 \bigg(\begin{pmatrix}
 w\\
 \varphi
 \end{pmatrix},
\begin{pmatrix}
u\\
\phi
 \end{pmatrix} \bigg)_{L^2\times L^2} := (w,u)_{L^2}+(\varphi,\phi)_{L^2}\,.
 \end{align*} 

Now, the adjoint operator $\mc{A}^*$ satisfies 
\begin{align*}
\wt{\mc{A}^*}_k = (\wt{\mc{A}}_k)^{\rm T}= \begin{pmatrix}
-(1+\mu)\lambda_k & \delta^{-1} \\
\mu\lambda_k & -\delta^{-1}
\end{pmatrix}\,,
\end{align*}
with the same eigenvalues \eqref{eq:A_evals} as $\wt{\mc{A}}_k$ but with eigenvectors given by 
\begin{equation}\label{eq:Astar_evec}
\bv^{*+}_k = \begin{pmatrix}
\frac{1+\delta\nu_k^+}{\mu\delta\lambda_k}\\
1
\end{pmatrix} \,,\quad 
\bv^{*-}_k = \begin{pmatrix}
1\\
\frac{\mu\delta\lambda_k}{1+\delta\nu_k^-}
\end{pmatrix}\,.
\end{equation}

We may again decompose the unit vectors $(1,0)^{\rm T}$ and $(0,1)^{\rm T}$ in terms of the eigenvectors \eqref{eq:Astar_evec} of $\wt{\mc{A}^*}_k$ as 
\begin{equation}\label{eq:Astar_decomp}
\begin{aligned}
\begin{pmatrix}
1\\
0
\end{pmatrix} &= a^{*-}_k\bv^{*-}_k+ a^{*+}_k\bv^{*+}_k\,, \quad a^{*-}_k=-\frac{1+\delta\nu_k^-}{\delta(\nu_k^+-\nu_k^-)}\,; \quad a^{*+}_k=\frac{\mu\delta\lambda_k}{\delta(\nu_k^+-\nu_k^-)}\,,  \\
\begin{pmatrix}
0\\
1
\end{pmatrix} &= b^{*-}_k\bv^{*-}_k+ b^{*+}_k\bv^{*+}_k\,, \quad b^{*-}_k=\frac{(1+\delta\nu_k^+)(1+\delta\nu_k^-)}{\mu\delta\lambda_k\,\delta(\nu_k^+-\nu_k^-)}\,; \quad b^{*+}_k=-\frac{1+\delta\nu_k^-}{\delta(\nu_k^+-\nu_k^-)}\,.
\end{aligned}
\end{equation}

Note that
\begin{align*}
1+\delta\nu_k^+ &= \frac{1}{2} - \frac{1}{2}\sqrt{(1+(1+\mu)\delta\lambda_k)^2-4\delta\lambda_k} - \frac{1}{2}(1+\mu)\delta\lambda_k\\
&=-\frac{\delta\lambda_k}{2}\bigg(\frac{2(1+\mu)+(1+\mu)^2\delta\lambda_k+4}{1+\sqrt{(1+(1+\mu)\delta\lambda_k)^2-4\delta\lambda_k}}+(1+\mu)\bigg)\,;
\end{align*}
in particular, we may bound
\begin{align*}
\abs{\frac{1+\delta\nu_k^+}{\delta\lambda_k}}\le c
\end{align*}
for some $c$ independent of $k$ and $\delta$. Then, as in \eqref{eq:A_coeffbds}, we have that the components in \eqref{eq:Astar_decomp} satisfy
\begin{equation}\label{eq:Astar_ests}
\abs{a^{*-}_k\bv^{*-}_k}\le c, \quad \abs{b^{*-}_k\bv^{*-}_k}\le c, \quad (\delta\lambda_k)^r\abs{a^{*+}_k\bv^{*+}_k}\le c, \quad (\delta\lambda_k)^r\abs{b^{*+}_k\bv^{*+}_k}\le c\,, \quad 0\le r \le 1\,.
\end{equation}

Using the decomposition \eqref{eq:Astar_decomp} and the bounds \eqref{eq:Astar_ests} in the same way as above, we have that
\begin{align*}
\norm{\mc{L}^{j/4}e^{t\mc{A}^*}\begin{pmatrix}
 w\\
 0
 \end{pmatrix} }_{L^2\times L^2}
 &= 
%
\norm{\sum_{k=1}^\infty\lambda_k^{j/4} \bigg(a_k^{*-}\bv_k^{*-}e^{t\nu_k^{-}}+a_k^{*+}\bv_k^{*+}e^{t\nu_k^{+}} \bigg)\wt{w}_k\psi_k}_{L^2\times L^2} \\
&\le c\,\max\{t^{-j/4},1\}\bigg(e^{-t\lambda_1}+e^{-t/(\delta(1+\mu))}\bigg)\norm{w}_{L^2} 
\end{align*}
and
\begin{align*}
\norm{\mc{L}^{j/4}e^{t\mc{A}^*}\begin{pmatrix}
 0\\
 \varphi
 \end{pmatrix} }_{L^2\times L^2}
 &= \norm{\sum_{k=1}^\infty\lambda_k^{j/4} \bigg(b_k^{*-}\bv_k^{*-}e^{t\nu_k^{-}}+b_k^{*+}\bv_k^{*+}e^{t\nu_k^{+}} \bigg)\wt{\varphi}_k\psi_k}_{L^2\times L^2} \\
&\le c\,\max\{t^{-j/4},1\}\bigg(e^{-t\lambda_1}+e^{-t/(\delta(1+\mu))}\bigg)\norm{\varphi}_{L^2} \,.
\end{align*}
In particular, since $D(\mc{L}^{j/4})\subseteq H^j$ for $0\le j\le 4$, we have
\begin{align*}
\sup_{\norm{(u,\phi)}_{L^2\times L^2}=1}\norm{e^{t\mc{A}}\begin{pmatrix}
\p_s^ju\\
\p_s^j\phi
 \end{pmatrix}}_{L^2\times L^2} &=
\sup_{\norm{(w,\varphi)}_{L^2\times L^2}=1}\norm{\p_s^je^{t\mc{A}^*}\begin{pmatrix}
 w\\
 \varphi
 \end{pmatrix} }_{L^2\times L^2} \\
 &\le 
 c\,\max\{t^{-j/4},1\}\bigg(e^{-t\lambda_1}+e^{-t/(\delta(1+\mu))}\bigg)\,.
\end{align*}

The desired estimate then holds for $(u,\phi)\in L^2\times L^2$ by density.
\end{proof}

\begin{proof}[Proof of Lemma \ref{lem:smalltime}]
Let $u_n=\sum_{k=1}^n\wt u_k\psi_k$ and $\phi_n=\sum_{k=1}^n\wt\phi_k\psi_k$. Since these sums are finite, we have $u_n,\phi_n\in D(\mc{L}^r)$ (see \eqref{eq:domainLr}) and we may estimate
\begin{align*}
\norm{e^{t\mc{A}}\begin{pmatrix}
u_n\\
\phi_n
\end{pmatrix}}_{\dot H^{4r}\times \dot H^{4r}} 
&\le \norm{\mc{L}^re^{t\mc{A}}\begin{pmatrix}
u_n\\
\phi_n
\end{pmatrix}}_{L^2\times L^2}
\le c\,e^{-t\Lambda}\norm{\sum_{k=1}^n\lambda_k^r\psi_k\begin{pmatrix}
\wt u_k\\
\wt\phi_k
\end{pmatrix}}_{L^2\times L^2} \le c_n\,e^{-t\Lambda}\,.
\end{align*}

By Lemma \ref{lem:semigrp}, we then have
\begin{align*}
e^{t\Lambda}\min\{t^r,1 \}&
 \norm{e^{t\mc{A}}\begin{pmatrix}
u\\
\phi
\end{pmatrix}}_{\dot H^{4r}\times \dot H^{4r}} \\
&\le e^{t\Lambda} \min\{t^r,1 \}\bigg(\norm{e^{t\mc{A}}\begin{pmatrix}
u-u_n\\
\phi-\phi_n
\end{pmatrix}}_{\dot H^{4r}\times \dot H^{4r}} 
+ \norm{e^{t\mc{A}}\begin{pmatrix}
u_n\\
\phi_n
\end{pmatrix}}_{\dot H^{4r}\times \dot H^{4r}}\bigg) \\
&\le c\norm{\begin{pmatrix}
u-u_n\\
\phi-\phi_n
\end{pmatrix}}_{L^2\times L^2} 
+ 
\min\{t^r,1 \}\,c_n \,.
\end{align*}
Taking $n$ sufficiently large and $t$ sufficiently small (depending on $n$), we obtain Lemma \ref{lem:smalltime}.
\end{proof}

\subsection{Tension equation}\label{subsec:tension}
We next prove the following lemma regarding the elliptic equation \eqref{eq:taueq} for the tension $\overline\tau$.

\begin{lemma}\label{lem:tension}
Given $(\overline\kappa,\xi,\kappa_0)\in H^1(I)\times H^1(I)\times H^1(I)$, there exists a unique weak solution $\overline\tau\in H^1_0(I)$ to \eqref{eq:taueq} satisfying 
\begin{equation}\label{est:tauH1}
\norm{\overline\tau}_{H^1(I)} \le c\bigg(\norm{\overline\kappa}_{\dot H^1}^2(\norm{\overline\kappa}_{L^2}+1)+\norm{\xi}_{\dot H^1}(\norm{\overline\kappa}_{\dot H^1}+\norm{\kappa_0}_{H^1})+\norm{\overline\kappa}_{\dot H^1}\norm{\kappa_0}_{H^1}(\norm{\kappa_0}_{L^2}+1) \bigg)\,.
\end{equation}

Furthermore, given $(\overline\kappa_a,\xi_a),(\overline\kappa_b,\xi_b)\in H^1(I)\times H^1(I)$, define $\overline\tau_a,\overline\tau_b\in H^1_0(I)$ to be the corresponding unique weak solutions to \eqref{eq:taueq}. The difference $\overline\tau_a-\overline\tau_b$ then satisfies
\begin{equation}\label{est:tau_lip}
\begin{aligned}
\norm{\overline\tau_a-\overline\tau_b}_{H^1} &\le
c\norm{\overline\kappa_a-\overline\kappa_b}_{L^2}\bigg(
(\norm{\xi_a}_{\dot H^1}+\norm{\xi_b}_{\dot H^1})\big(\norm{\overline\kappa_a}_{\dot H^1}+\norm{\overline\kappa_b}_{\dot H^1}+\norm{\kappa_0}_{H^1}\big)\\
&\quad +\big( \norm{\overline\kappa_a}_{\dot H^1}^2+ \norm{\overline\kappa_b}_{\dot H^1}^2+\norm{\kappa_0}_{H^1}^2\big)\bigg)\big(\norm{\overline\kappa_a}_{L^2}+\norm{\overline\kappa_b}_{L^2}+\norm{\kappa_0}_{L^2}+1\big)^2 \\
&\qquad
 + c\norm{\overline\kappa_a-\overline\kappa_b}_{\dot H^1}\big(\norm{\overline\kappa_a}_{\dot H^1}+\norm{\overline\kappa_b}_{\dot H^1}+\norm{\xi_a}_{\dot H^1}+\norm{\kappa_0}_{H^1}\big) \\
 &\qquad +c\norm{\xi_a-\xi_b}_{\dot H^1}(\norm{\overline\kappa_b}_{\dot H^1}+\norm{\kappa_0}_{H^1}) \,.
\end{aligned}
\end{equation}
\end{lemma}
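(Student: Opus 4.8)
The plan is to treat \eqref{eq:taueq} as a linear, coercive, second-order two-point boundary value problem for $\overline\tau$, in which $\overline\kappa$, $\xi$, $\kappa_0$ enter only through a nonnegative zeroth-order coefficient $(\overline\kappa+\kappa_0)^2$ and through a right-hand side. First I would fix the weak formulation: call $\overline\tau\in H^1_0(I)$ a weak solution of \eqref{eq:taueq} if
\[
(1+\gamma)\int(\overline\tau)_s\phi_s\,ds+\int(\overline\kappa+\kappa_0)^2\overline\tau\,\phi\,ds=-\langle\mc{F},\phi\rangle\qquad\text{for all }\phi\in H^1_0(I),
\]
where $\langle\mc{F},\phi\rangle$ collects $\mc{T}[\overline\kappa,\kappa_0]$ together with the viscoelastic term $\tfrac{\mu}{1+\mu}\big((2+\gamma)((\overline\kappa+\kappa_0)\xi_s)_s-(\overline\kappa+\kappa_0)_s\xi_s\big)$, with the highest $s$-derivative in each summand integrated by parts once onto $\phi$ (all boundary contributions vanish since $\phi(0)=\phi(1)=0$). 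Because $\overline\kappa,\kappa_0,\xi\in H^1(I)\hookrightarrow C(\overline I)$, after this single integration by parts every summand of $\mc{F}$ is either an $L^1(I)$ function paired against $\phi$ or an $L^2(I)$ function paired against $\phi_s$, so $\phi\mapsto\langle\mc{F},\phi\rangle$ is bounded on $H^1_0(I)$. The bilinear form on the left is bounded (since $(\overline\kappa+\kappa_0)^2\in L^\infty$) and coercive, with coercivity constant bounded below by a multiple of $(1+\gamma)$ via Poincaré and the nonnegativity of the zeroth-order term --- in particular independent of $\overline\kappa,\xi,\kappa_0$. Lax--Milgram then gives existence and uniqueness; uniqueness also follows directly by testing the homogeneous equation with $\overline\tau$ itself.

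For the bound \eqref{est:tauH1} I would test the weak formulation with $\phi=\overline\tau$. The left side is at least $(1+\gamma)\|\overline\tau_s\|_{L^2}^2\ge c\,\|\overline\tau\|_{H^1}^2$ by Poincaré, so it remains to estimate $|\langle\mc{F},\overline\tau\rangle|$ term by term using H\"older and the one-dimensional embeddings $\|f\|_{L^\infty}\le c\,\|f\|_{H^1}$ and, crucially for the $\dot H^1$ norms appearing in \eqref{est:tauH1}, the sharper $\|f\|_{L^\infty}\le\|f_s\|_{L^1}\le\|f\|_{\dot H^1}$ valid when $f(0)=0$, which holds for $\overline\kappa$ and $\xi$ by the boundary conditions \eqref{eq:BCs} (whereas $\kappa_0$ need only be controlled in $H^1$). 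The polynomial term of $\mc{T}$ produces the pieces $\|\overline\kappa\|_{\dot H^1}^2(\|\overline\kappa\|_{L^2}+1)$ and $\|\overline\kappa\|_{\dot H^1}\|\kappa_0\|_{H^1}(\|\kappa_0\|_{L^2}+1)$; the derivative terms $(\overline\kappa+\kappa_0)_s\overline\kappa_s$, $(\overline\kappa(\overline\kappa+2\kappa_0))_{ss}$ and $(\overline\kappa_s(\overline\kappa+\kappa_0))_s$ produce the remaining $\dot H^1$-quadratic pieces (an $L^2$ factor paired against $\overline\tau_s$); and the viscoelastic term produces $\|\xi\|_{\dot H^1}(\|\overline\kappa\|_{\dot H^1}+\|\kappa_0\|_{H^1})$. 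Dividing through by $\|\overline\tau\|_{H^1}$ gives \eqref{est:tauH1}; this is essentially the Newtonian computation of \cite{RFTpaper} together with one extra, lower-order $\xi$-line.

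For \eqref{est:tau_lip} I would set $\overline\tau_d=\overline\tau_a-\overline\tau_b$, $\overline\kappa_d=\overline\kappa_a-\overline\kappa_b$, $\xi_d=\xi_a-\xi_b$ and subtract the two weak formulations, keeping the zeroth-order coefficient written with the $a$-fields; then $\overline\tau_d$ satisfies
\[
(1+\gamma)\int(\overline\tau_d)_s\phi_s+\int(\overline\kappa_a+\kappa_0)^2\overline\tau_d\,\phi=\int\big((\overline\kappa_b+\kappa_0)^2-(\overline\kappa_a+\kappa_0)^2\big)\overline\tau_b\,\phi-\langle\mc{F}_a-\mc{F}_b,\phi\rangle,
\]
and I would test with $\phi=\overline\tau_d$ and use coercivity exactly as before. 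The coefficient difference factors as $(\overline\kappa_b+\kappa_0)^2-(\overline\kappa_a+\kappa_0)^2=-\overline\kappa_d(\overline\kappa_a+\overline\kappa_b+2\kappa_0)$; paired with $\overline\tau_b\overline\tau_d$ and with $\|\overline\tau_b\|_{H^1}$ replaced using the a priori bound \eqref{est:tauH1}, this yields the first block of \eqref{est:tau_lip}, including the $(\|\overline\kappa_a\|_{L^2}+\|\overline\kappa_b\|_{L^2}+\|\kappa_0\|_{L^2}+1)^2$ growth. For $\mc{F}_a-\mc{F}_b$ I would telescope every monomial of $\mc{T}$ and of the viscoelastic term so that $\overline\kappa_d$ or $\xi_d$ always appears as a factor (e.g. replacing $\overline\kappa_a(\overline\kappa_a+2\kappa_0)-\overline\kappa_b(\overline\kappa_b+2\kappa_0)$ by $\overline\kappa_d(\overline\kappa_a+\overline\kappa_b+2\kappa_0)$ before differentiating): sub-terms in which $\overline\kappa_d$ or $\xi_d$ is itself differentiated give the $\|\overline\kappa_a-\overline\kappa_b\|_{\dot H^1}$ and $\|\xi_a-\xi_b\|_{\dot H^1}$ blocks, while sub-terms in which it is not differentiated (hence estimated in $L^\infty\lesssim\dot H^1$, or in $L^2$ when it can be paired against a coefficient's $L^2$ derivative with $\overline\tau_d\in L^\infty$) assemble, together with \eqref{est:tauH1}, into the $\|\overline\kappa_a-\overline\kappa_b\|_{L^2}$ block. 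Dividing by $\|\overline\tau_d\|_{H^1}$ then yields \eqref{est:tau_lip}.

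The main obstacle is not conceptual but the bookkeeping in \eqref{est:tau_lip}: for each of the roughly ten telescoped sub-terms one must decide which difference norm ($L^2$ or $\dot H^1$) it carries and whether to place $\overline\tau_d$ in $L^2$ or in $L^\infty$, and one must thread in the quartic bound \eqref{est:tauH1} for $\overline\tau_b$ so that precisely the stated $(\cdots+1)^2$ growth --- and nothing worse --- emerges. The corresponding accounting in the Newtonian setting is carried out in \cite{RFTpaper}; here it is only enlarged by the $\xi$-terms, which are manifestly of lower order (no second derivative of $\xi$ ever appears), so no genuinely new analytic difficulty arises.
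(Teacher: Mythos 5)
Your proposal is correct and rests on the same engine as the paper's proof: Lax--Milgram for existence/uniqueness, coercivity of the bilinear form via Poincar\'e and the nonnegativity of the zeroth-order coefficient (with a constant independent of the data), and then testing with $\overline\tau$ (resp.\ $\overline\tau_a-\overline\tau_b$) together with the $1$D embeddings $\|f\|_{L^\infty}\le\|f_s\|_{L^2}$ for $f$ vanishing at an endpoint. The one organizational difference is that the paper first splits $\overline\tau=\overline\tau^{\rm nw}+\overline\tau^{\rm ve}$ into a Newtonian piece solving \eqref{eq:tau_nw} and a viscoelastic piece solving \eqref{eq:tau_ve}; it then imports the bound \eqref{eq:tau_nw_bd} and the Lipschitz estimate \eqref{eq:tau_nw_lip} for $\overline\tau^{\rm nw}$ directly from \cite{RFTpaper}, and only does the energy estimate by hand for $\overline\tau^{\rm ve}$. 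You instead treat the full right-hand side at once (while noting that the $\mc{T}$-part reproduces the Newtonian computation). This is a genuine but superficial difference: the decomposition buys a clean reuse of prior bounds and keeps the new viscoelastic terms isolated, whereas your single pass is slightly more self-contained at the price of redoing the $\mc{T}$-bookkeeping. The choice of symmetrization in the Lipschitz step is also immaterial: the paper centers the zeroth-order coefficient at the average $\tfrac12[(\overline\kappa_a+\kappa_0)^2+(\overline\kappa_b+\kappa_0)^2]$ (so the residual term carries $\overline\tau^{\rm ve}_a+\overline\tau^{\rm ve}_b$ and hence both $\xi_a$ and $\xi_b$ through \eqref{eq:tau_ve_bd}), while you keep the $a$-coefficient on the left (so only $\overline\tau_b$ appears in the residual); either yields a bound implying \eqref{est:tau_lip}.
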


\begin{proof}
We begin by decomposing $\overline\tau$ into $\overline\tau=\overline\tau^{\rm nw}+\overline\tau^{\rm ve}$, where
\begin{align}
\overline\tau^{\rm nw}_{ss}-\frac{(\overline\kappa+\kappa_0)^2}{1+\gamma}\overline\tau^{\rm nw} &=\frac{1}{1+\gamma}\mc{T}[\overline\kappa,\kappa_0]  \label{eq:tau_nw}\\
\overline\tau^{\rm ve}_{ss}-\frac{(\overline\kappa+\kappa_0)^2}{1+\gamma}\overline\tau^{\rm ve} &= \frac{1}{1+\gamma}\frac{\mu}{1+\mu}\bigg( (2+\gamma)((\overline\kappa+\kappa_0)\xi_s)_s - (\overline\kappa+\kappa_0)_s\xi_s \bigg)\,. \label{eq:tau_ve}
\end{align}

From \cite{RFTpaper}, we have that there exists a unique $\overline\tau^{\rm nw}\in H^1_0(I)$ satisfying \eqref{eq:tau_nw} in a weak sense, with 
\begin{equation}\label{eq:tau_nw_bd}
\norm{\overline\tau^{\rm nw}}_{H^1} \le 
c\bigg(\norm{\overline\kappa}_{\dot H^1}^2(\norm{\overline\kappa}_{L^2}+1)+\norm{\overline\kappa}_{\dot H^1}\norm{\kappa_0}_{H^1}(\norm{\kappa_0}_{L^2}+1) \bigg)\,.
\end{equation}

It thus remains to consider \eqref{eq:tau_ve}. As in the Newtonian setting, we define the bilinear form
\begin{align*}
\mc{B}(\overline\tau,\phi) := \int_0^1\bigg(\overline\tau_s\phi_s+\frac{(\overline\kappa+\kappa_0)^2}{1+\gamma}\overline\tau\phi \bigg)\,ds\,,
\end{align*}
which is bounded and coercive on $H^1_0(I)$. A weak solution to \eqref{eq:tau_ve} may then be defined as $\overline\tau^{\rm ve}\in H^1_0$ satisfying 
\begin{align*}
\mc{B}(\overline\tau^{\rm ve},\phi) = \frac{1}{1+\gamma}\frac{\mu}{1+\mu}\int_0^1\bigg((2+\gamma)(\overline\kappa+\kappa_0)\xi_s\phi_s + (\overline\kappa+\kappa_0)_s\xi_s \phi \bigg)\,ds
\end{align*}
for all $\phi\in H^1_0(I)$, and existence and uniqueness follow from the Lax-Milgram lemma. Furthermore, we may estimate
\begin{align*}
\mc{B}(\overline\tau^{\rm ve},\overline\tau^{\rm ve}) &\le
c(\norm{\overline\kappa}_{\dot H^1}+\norm{\kappa_0}_{H^1})\norm{\xi}_{\dot H^1}\norm{\overline\tau^{\rm ve}}_{H^1} \,,
\end{align*}
and using that $\norm{\overline\tau^{\rm ve}}_{H^1}^2 \le c\mc{B}(\overline\tau^{\rm ve},\overline\tau^{\rm ve})$ along with Young's inequality, we obtain
\begin{equation}\label{eq:tau_ve_bd}
\norm{\overline\tau^{\rm ve}}_{H^1} \le c(\norm{\overline\kappa}_{\dot H^1}+\norm{\kappa_0}_{H^1})\norm{\xi}_{\dot H^1}\,.
\end{equation}
Combining estimates \eqref{eq:tau_nw_bd} and \eqref{eq:tau_ve_bd}, we obtain \eqref{est:tauH1}. \\

To show the Lipschitz estimate \eqref{est:tau_lip}, we first recall that, from \cite{RFTpaper}, we have 
\begin{equation}\label{eq:tau_nw_lip}
\begin{aligned}
\norm{\overline\tau^{\rm nw}_a-\overline\tau^{\rm nw}_b}_{H^1} &\le
c\norm{\overline\kappa_a-\overline\kappa_b}_{L^2}
 \big( \norm{\overline\kappa_a}_{\dot H^1}^2+ \norm{\overline\kappa_b}_{\dot H^1}^2+\norm{\kappa_0}_{H^1}^2\big)\big(\norm{\overline\kappa_a}_{L^2}+\norm{\overline\kappa_b}_{L^2}+\norm{\kappa_0}_{L^2}+1\big)^2\\
&\qquad
 + c\norm{\overline\kappa_a-\overline\kappa_b}_{\dot H^1}\big(\norm{\overline\kappa_a}_{\dot H^1}+\norm{\overline\kappa_b}_{\dot H^1}+\norm{\kappa_0}_{H^1}\big) \,.
\end{aligned}
\end{equation}

It remains to estimate the viscoelastic contribution, which (weakly) satisfies 
 \begin{align*}
(\overline\tau^{\rm ve}_a-\overline\tau^{\rm ve}_b)_{ss}&-\frac{(\overline\kappa_a+\kappa_0)^2+(\overline\kappa_b+\kappa_0)^2}{2(1+\gamma)}(\overline\tau^{\rm ve}_a-\overline\tau^{\rm ve}_b) \\
 &=\frac{1}{1+\gamma}\bigg(\frac{1}{2}(\overline\kappa_a-\overline\kappa_b)(\overline\kappa_a+\overline\kappa_b+2\kappa_0)(\overline\tau^{\rm ve}_a+\overline\tau^{\rm ve}_b)\\
 &\qquad +\frac{\mu}{1+\mu}\bigg( (2+\gamma)\big((\overline\kappa_a-\overline\kappa_b)(\xi_a)_s+(\overline\kappa_b+\kappa_0)(\xi_a-\xi_b)_s\big)_s \\
 &\qquad\qquad - (\overline\kappa_a-\overline\kappa_b)_s(\xi_a)_s+ (\overline\kappa_b+\kappa_0)_s(\xi_a-\xi_b)_s \bigg)\bigg)\,.
 \end{align*}

In particular, we have
 \begin{align*}
\norm{\overline\tau^{\rm ve}_a-\overline\tau^{\rm ve}_b}_{H^1}^2&\le c\int_0^1\bigg((\overline\tau^{\rm ve}_a-\overline\tau^{\rm ve}_b)_s^2+\frac{(\overline\kappa_a+\kappa_0)^2+(\overline\kappa_b+\kappa_0)^2}{2(1+\gamma)}(\overline\tau^{\rm ve}_a-\overline\tau^{\rm ve}_b)^2\bigg)\,ds \\
 &\le c\bigg(\norm{\overline\kappa_a-\overline\kappa_b}_{L^2}(\norm{\overline\kappa_a}_{L^2}+\norm{\overline\kappa_b}_{L^2}+\norm{\kappa_0}_{L^2})\big(
\norm{\overline\kappa_a}_{\dot H^1}\norm{\xi_a}_{\dot H^1} +\norm{\overline\kappa_b}_{\dot H^1}\norm{\xi_b}_{\dot H^1} \\
&\quad +(\norm{\xi_a}_{\dot H^1}+\norm{\xi_b}_{\dot H^1})\norm{\kappa_0}_{H^1}\big)\norm{\overline\tau^{\rm ve}_a-\overline\tau^{\rm ve}_b}_{L^2}\\
 &\quad +\big(\norm{\overline\kappa_a-\overline\kappa_b}_{\dot H^1}\norm{\xi_a}_{\dot H^1}+(\norm{\overline\kappa_b}_{\dot H^1}+\norm{\kappa_0}_{H^1})\norm{\xi_a-\xi_b}_{\dot H^1}\big) \norm{\overline\tau^{\rm ve}_a-\overline\tau^{\rm ve}_b}_{H^1}  \bigg) \,.
 \end{align*}

Applying Young's inequality, we obtain
\begin{equation}\label{eq:tau_ve_lip}
\begin{aligned}
\norm{\overline\tau^{\rm ve}_a-\overline\tau^{\rm ve}_b}_{H^1}  
 &\le c\bigg(\norm{\overline\kappa_a-\overline\kappa_b}_{L^2}\big(\norm{\overline\kappa_a}_{L^2}+\norm{\overline\kappa_b}_{L^2}+\norm{\kappa_0}_{L^2}\big)\big(
\norm{\overline\kappa_a}_{\dot H^1}+\norm{\overline\kappa_b}_{\dot H^1}\\
&\qquad +\norm{\kappa_0}_{H^1}\big)(\norm{\xi_a}_{\dot H^1}+\norm{\xi_b}_{\dot H^1}) \\
&\qquad  +\norm{\overline\kappa_a-\overline\kappa_b}_{\dot H^1}\norm{\xi_a}_{\dot H^1}+(\norm{\overline\kappa_b}_{\dot H^1}+\norm{\kappa_0}_{H^1})\norm{\xi_a-\xi_b}_{\dot H^1}   \bigg) \,.
\end{aligned}
\end{equation}
Combining \eqref{eq:tau_nw_lip} and \eqref{eq:tau_ve_lip} yields \eqref{est:tau_lip}.
\end{proof}

\subsection{Evolution equation}\label{subsec:evolution}
We now proceed to the proof of Theorem \ref{thm:wellposed}.

\begin{proof}[Proof of Theorem \ref{thm:wellposed}]
We will consider the map
\begin{equation}\label{eq:Psi_map}
\begin{aligned}
\Psi\bigg[\begin{pmatrix}
\overline\kappa \\
\xi 
\end{pmatrix}\bigg] &= e^{\mc{A}t}\begin{pmatrix}
\overline\kappa^{\rm in} \\
\xi^{\rm in} 
\end{pmatrix} - \int_0^te^{\mc{A}(t-t')}\begin{pmatrix}
 \dot\kappa_0 \\
0
\end{pmatrix}\, dt'+ (1+\mu)\int_0^te^{\mc{A}(t-t')}\begin{pmatrix}
 \big(\mc{N}[\overline\kappa,\kappa_0] \big)_s \\
0
\end{pmatrix}\, dt' \\
&\hspace{3cm}
 - \mu(1+\gamma)\int_0^te^{\mc{A}(t-t')}\begin{pmatrix}
\big((\overline\kappa+\kappa_0)^2\xi_s \big)_s \\
0
\end{pmatrix}\, dt'
\end{aligned}
\end{equation}
and show that $\Psi$ admits a unique fixed point in a suitable function space. To construct such a function space, we first define the spaces
\begin{equation}\label{eq:Y0Y1}
\begin{aligned}
\mc{Y}_0 &= \big\{ u\in C([0,T];L^2(I))\,:\, \norm{u}_{\mc{Y}_0}<\infty \big\}\,, \quad \norm{\cdot}_{\mc{Y}_0}:= \sup_{t\in [0,T]} \norm{\cdot}_{L^2(I)}\\
\mc{Y}_1 &= \big\{ u\in C((0,T];\dot H^1(I))\,:\, \norm{u}_{\mc{Y}_1}<\infty \big\} \,, \quad \norm{\cdot}_{\mc{Y}_1}:=\sup_{t\in [0,T]} \min\{t^{1/4},1\}\,\norm{\cdot}_{\dot H^1(I)}\,.
\end{aligned}
\end{equation}
We close our contraction mapping argument for $(\overline\kappa,\xi)$ in $(\mc{Y}_0\times \mc{Y}_0) \cap (\mc{Y}_1\times \mc{Y}_1)$. \\

Given a function space $\mc{X}\times\mc{X}$, we will use the notation $B_{M}(\mc{X}\times\mc{X})$ to denote the closed ball in $\mc{X}\times\mc{X}$ of radius $M$, i.e.
\begin{equation}\label{eq:ball}
B_{M}(\mc{X}\times\mc{X}) = \bigg\{ \begin{pmatrix}
u\\
\phi \end{pmatrix}\in \mc{X}\times\mc{X} \,:\, \norm{\begin{pmatrix}
u\\
\phi \end{pmatrix}}_{\mc{X}\times\mc{X}}\le M \bigg\}\,.
\end{equation}

We first show that $\Psi$ maps $B_{M_0}(\mc{Y}_0\times\mc{Y}_0)\cap B_{M_1}(\mc{Y}_1\times\mc{Y}_1)$ into itself for some $M_1,M_0>0$.\\

Since the nonlinear terms $\mc{N}$ have the same form as in the Newtonian setting, from \cite{RFTpaper}, we have 
\begin{equation}\label{eq:Nl2est}
\begin{aligned}
\norm{\mc{N}[\overline\kappa,\kappa_0]}_{L^2(I)} &\le c\big(\norm{\overline\kappa}_{\dot H^1}^2+\norm{\kappa_0}_{H^1}^2+\norm{\overline\tau}_{H^1} \big)\big( \norm{\overline\kappa}_{\dot H^1}+\norm{\kappa_0}_{H^1}\big) \\
&\le c\bigg((\norm{\overline\kappa}_{\dot H^1}^3+\norm{\kappa_0}_{H^1}^3)(\norm{\overline\kappa}_{L^2}+\norm{\kappa_0}_{L^2}+1)+\norm{\xi}_{\dot H^1}(\norm{\overline\kappa}_{\dot H^1}^2+\norm{\kappa_0}_{H^1}^2) \bigg)\,.
\end{aligned}
\end{equation}
Here we have used the new viscoelastic tension estimate (Lemma \ref{lem:tension}) in the second line. Then, using Lemma \ref{lem:semigrp}, for $(\overline\kappa,\xi)\in B_{M_0}(\mc{Y}_0\times\mc{Y}_0)\cap B_{M_1}(\mc{Y}_1\times\mc{Y}_1)$, we may estimate the second forcing term of \eqref{eq:Psi_map} in $\dot H^m(I)\times \dot H^m(I)$, $m=0,1$, as 
\begin{equation}\label{eq:H3term1}
\begin{aligned}
&\norm{\int_0^te^{\mc{A}(t-t')}\begin{pmatrix}
 \big(\mc{N}[\overline\kappa,\kappa_0] \big)_s \\
0
\end{pmatrix}\, dt'}_{\dot H^m\times \dot H^m} \\
&\hspace{1cm}\le c\int_0^t 
\max\{(t-t')^{-(m+1)/4},1\}\, e^{-(t-t')\Lambda}\norm{\mc{N}[\overline\kappa,\kappa_0]}_{L^2}\,dt' \\
&\hspace{1cm}\le c\int_0^t
\max\{(t-t')^{-(m+1)/4},1\}\, \max\{(t')^{-3/4},1\}\,e^{-(t-t')\Lambda}\,dt' \, M_1^3 \big(M_0+M_1+1\big) \\ 
&\hspace{1cm}\le c\,
\max\{t^{-m/4},1\}\, M_1^3 \big(M_0+M_1+1\big)\,.
\end{aligned}
\end{equation}
Here we have also taken $\sup_{t\in[0,T]}\norm{\kappa_0}_{H^1(I)}\le cM_1$. Furthermore, using Lemma \ref{lem:semigrp}, we may estimate the third forcing term of \eqref{eq:Psi_map} in $\dot H^m(I)\times \dot H^m(I)$, $m=0,1$, by
\begin{equation}\label{eq:H3term2}
\begin{aligned}
&\norm{\int_0^te^{\mc{A}(t-t')}\begin{pmatrix}
 \big((\overline\kappa+\kappa_0)^2\xi_s \big)_s \\
0
\end{pmatrix}\, dt'}_{\dot H^m\times\dot H^m} \\
&\hspace{1cm}\le c\int_0^t
\max\{(t-t')^{-(m+1)/4},1\}\, e^{-(t-t')\Lambda}\norm{\xi}_{\dot H^1}\big( \norm{\overline\kappa}_{L^2}^2 + \norm{\kappa_0}_{L^2}^2\big)\,dt' \\
&\hspace{1cm}\le c\int_0^t
\max\{(t-t')^{-(m+1)/4},1\}\,\max\{(t')^{-1/4},1\} \,e^{-(t-t')\Lambda}\,dt' \, M_1(M_0^2+M_1^2) \\
&\hspace{1cm}\le c\,M_1(M_0^2+M_1^2)\,.
\end{aligned}
\end{equation}
Finally, the forcing term involving $\dot\kappa_0$ may be estimated in $\dot H^m(I)\times \dot H^m(I)$, $m=0,1$, as 
\begin{align*}
\norm{\int_0^t e^{\mc{A}(t-t')}\begin{pmatrix}
 \dot\kappa_0 \\
0
\end{pmatrix}\, dt'}_{\dot H^m\times\dot H^m} &\le c\int_0^t
\max\{(t-t')^{-m/4},1 \}\,e^{-(t-t')\Lambda}\norm{\dot\kappa_0}_{L^2(I)}\,dt' \\
&\le c\,\bigg(\sup_{t\in[0,T]}\norm{\dot\kappa_0}_{L^2(I)}\bigg)\,.
\end{align*}

Combining the above three estimates and using Lemma \ref{lem:semigrp} to estimate the initial data, we obtain the following $\mc{Y}_0\times\mc{Y}_0$ bound: 
\begin{equation}\label{eq:Y0Y0bd}
\begin{aligned}
\norm{\Psi\bigg[\begin{pmatrix}
\overline\kappa \\
\xi 
\end{pmatrix}\bigg]}_{\mc{Y}_0\times\mc{Y}_0} &\le c\bigg( \norm{\begin{pmatrix}
\overline\kappa^{\rm in}\\
\xi^{\rm in}
\end{pmatrix}}_{L^2\times L^2} + M_1^3(M_0+M_1+1) \\
&\qquad + M_1(M_0^2+M_1^2) + \sup_{t\in[0,T]}\norm{\dot\kappa_0}_{L^2} \bigg) 
\le M_0\,,
\end{aligned}
\end{equation}
provided that we choose $M_0=c_0\norm{\begin{pmatrix}
\overline\kappa^{\rm in}\\
\xi^{\rm in}
\end{pmatrix}}_{L^2\times L^2}$ for $c_0$ small enough, $c\,\sup_{t\in[0,T]}\norm{\dot\kappa_0}_{L^2}\le M_0/4$, and $M_1$ small enough that $c\big(M_1^3(M_0+M_1+1)+M_1(M_0^2+M_1^2)\big)\le M_0/4$. \\

We may also obtain the following $\mc{Y}_1\times\mc{Y}_1$ bound for $\Psi$: 
\begin{equation}\label{eq:Y1Y1bd}
\begin{aligned}
\norm{\Psi\bigg[\begin{pmatrix}
\overline\kappa \\
\xi 
\end{pmatrix}\bigg]}_{\mc{Y}_1\times\mc{Y}_1} &\le 
\sup_{t\in[0,T]}\min\{t^{1/4},1\} \norm{e^{t\mc{A}}\begin{pmatrix}
\overline\kappa^{\rm in} \\
\xi^{\rm in} 
\end{pmatrix}}_{\dot H^1\times \dot H^1} + c\bigg(M_1^3(M_0+M_1+1) \\
&\qquad +M_1(M_0^2+M_1^2) +\sup_{t\in[0,T]}\norm{\dot\kappa_0}_{L^2} \bigg) \\
&\le \sup_{t\in[0,T]}\min\{t^{1/4},1\} \norm{e^{t\mc{A}}\begin{pmatrix}
\overline\kappa^{\rm in} \\
\xi^{\rm in} 
\end{pmatrix}}_{\dot H^1\times \dot H^1} + \frac{M_1}{2}\,,
\end{aligned}
\end{equation}
provided that $c\,\sup_{t\in[0,T]}\norm{\dot\kappa_0}_{L^2}\le M_1/4$, and $M_0$ and $M_1$ are small enough that $c\big(M_1^3(M_0+M_1+1)+M_1(M_0^2+M_1^2)\big)\le M_1/4$.\\

It remains to show that 
\begin{equation}\label{eq:init_data_bd}
\sup_{t\in[0,T]}\min\{t^{1/4},1\} \norm{e^{t\mc{A}}\begin{pmatrix}
\overline\kappa^{\rm in} \\
\xi^{\rm in} 
\end{pmatrix}}_{\dot H^1\times \dot H^1} \le \frac{M_1}{2}\,,
\end{equation}
which we may achieve by either choosing a small time interval $T$ or small initial data. 
For small time, we may use Lemma \ref{lem:smalltime} to find $T_{M_1}>0$ such that \eqref{eq:init_data_bd} holds. 
For small initial data, we may use Lemma \ref{lem:semigrp} to obtain 
\begin{align*}
\sup_{t\in[0,T]}\min\{t^{1/4},1\} \norm{e^{t\mc{A}}\begin{pmatrix}
\overline\kappa^{\rm in} \\
\xi^{\rm in} 
\end{pmatrix}}_{\dot H^1\times \dot H^1} 
\le c \,\norm{\begin{pmatrix}
\overline\kappa^{\rm in}\\
\xi^{\rm in}
\end{pmatrix}}_{L^2\times L^2}\,,
\end{align*}
and for sufficiently small $(\overline\kappa^{\rm in},\xi^{\rm in})$, we may take $M_1=c_1\norm{\begin{pmatrix}
\overline\kappa^{\rm in}\\
\xi^{\rm in}
\end{pmatrix}}_{L^2\times L^2}$ to obtain the bound \eqref{eq:init_data_bd}. \\

We next show that the map $\Psi$ is a contraction on $B_{M_0}(\mc{Y}_0\times\mc{Y}_0)\cap B_{M_1}(\mc{Y}_1\times\mc{Y}_1)$. Given two pairs $(\overline\kappa_a,\xi_a)$, $(\overline\kappa_b,\xi_b)$, we seek an estimate for $\Psi\bigg[\begin{pmatrix}
\overline\kappa_a \\
\xi_a 
\end{pmatrix}\bigg]-\Psi\bigg[\begin{pmatrix}
\overline\kappa_b \\
\xi_b 
\end{pmatrix}\bigg]$. \\

First, from \cite{RFTpaper}, we may borrow the estimate
\begin{equation}\label{eq:lip_est1}
\begin{aligned}
&\norm{\mc{N}[\overline\kappa_a(\cdot,t)]-\mc{N}[\overline\kappa_b(\cdot,t)]}_{L^2(I)} \\
&\le c\bigg(\norm{\overline\kappa_a-\overline\kappa_b}_{\dot H^1}\big(\norm{\overline\kappa_a}_{\dot H^1}^2+\norm{\overline\kappa_b}_{\dot H^1}^2+\norm{\kappa_0}_{H^1}^2\big)  \\
&\qquad  +\norm{\overline\tau_a-\overline\tau_b}_{H^1}\big(\norm{\overline\kappa_a}_{\dot H^1}+\norm{\kappa_0}_{H^1}\big) + \norm{\overline\kappa_a-\overline\kappa_b}_{\dot H^1}\norm{\overline\tau_b}_{H^1}\bigg) \\
&\le 
 c\bigg(\norm{\overline\kappa_a-\overline\kappa_b}_{\dot H^1}+\norm{\overline\kappa_a-\overline\kappa_b}_{L^2}\big(\norm{\overline\kappa_a}_{\dot H^1}+\norm{\kappa_0}_{H^1}\big)\bigg)\bigg( \norm{\overline\kappa_a}_{\dot H^1}^2+ \norm{\overline\kappa_b}_{\dot H^1}^2+\norm{\kappa_0}_{H^1}^2
\\
&\qquad +(\norm{\xi_a}_{\dot H^1}+\norm{\xi_b}_{\dot H^1})\big(\norm{\overline\kappa_a}_{\dot H^1}+\norm{\overline\kappa_b}_{\dot H^1}+\norm{\kappa_0}_{H^1}\big)\bigg)\big(\norm{\overline\kappa_a}_{L^2}+\norm{\overline\kappa_b}_{L^2}+\norm{\kappa_0}_{L^2}+1\big)^2 \\
&\quad +c\norm{\xi_a-\xi_b}_{\dot H^1}\big(\norm{\overline\kappa_a}_{\dot H^1}+\norm{\kappa_0}_{H^1}\big)(\norm{\overline\kappa_b}_{\dot H^1}+\norm{\kappa_0}_{H^1})\,.
\end{aligned}
\end{equation}
Here we have again used the new viscoelastic estimates of Lemma \ref{lem:tension} to bound the tension in the second inequality. Furthermore, we have the following Lipschitz bound for the new viscoelastic nonlinear term:  
\begin{equation}\label{eq:lip_est2}
\begin{aligned}
&\norm{(\overline\kappa_a+\kappa_0)^2(\xi_a)_s - (\overline\kappa_b+\kappa_0)^2(\xi_b)_s}_{L^2(I)} \\
&\qquad \le \norm{\xi_a-\xi_b}_{\dot H^1}(\norm{\overline\kappa_a}_{\dot H^1}^2+\norm{\kappa_0}_{H^1}^2) + \norm{\overline\kappa_a-\overline\kappa_b}_{\dot H^1}\norm{\xi_b}_{\dot H^1}(\norm{\overline\kappa_a}_{\dot H^1}+\norm{\overline\kappa_b}_{\dot H^1}+\norm{\kappa_0}_{H^1}) \,.
\end{aligned}
\end{equation}

Together, we may then obtain the Lipschitz estimate 
\begin{align*}
&\norm{\Psi\bigg[\begin{pmatrix}
\overline\kappa_a \\
\xi_a 
\end{pmatrix}\bigg]-\Psi\bigg[\begin{pmatrix}
\overline\kappa_b \\
\xi_b 
\end{pmatrix}\bigg]}_{\dot H^m\times \dot H^m} \\
&\quad \le c\int_0^t
\max\{(t-t')^{-(m+1)/4},1\}\, e^{-(t-t')\Lambda}\big(\norm{\mc{N}[\overline\kappa_a(\cdot,t)]-\mc{N}[\overline\kappa_b(\cdot,t)]}_{L^2(I)} \\
&\hspace{3cm}+ \norm{(\overline\kappa_a+\kappa_0)^2(\xi_a)_s - (\overline\kappa_b+\kappa_0)^2(\xi_b)_s}_{L^2(I)}\big)\,dt' \\
&\quad\le c\int_0^t
\max\{(t-t')^{-(m+1)/4},1\}\, e^{-(t-t')\Lambda}\bigg(\norm{\overline\kappa_a-\overline\kappa_b}_{\dot H^1}\\
&\qquad+\norm{\overline\kappa_a-\overline\kappa_b}_{L^2}\big(\norm{\overline\kappa_a}_{\dot H^1}+\norm{\kappa_0}_{H^1}\big)\bigg)\bigg( (\norm{\xi_a}_{\dot H^1}+\norm{\xi_b}_{\dot H^1})\big(\norm{\overline\kappa_a}_{\dot H^1}+\norm{\overline\kappa_b}_{\dot H^1}+\norm{\kappa_0}_{H^1}\big)
\\
&\qquad +\norm{\overline\kappa_a}_{\dot H^1}^2+ \norm{\overline\kappa_b}_{\dot H^1}^2+\norm{\kappa_0}_{H^1}^2\bigg)\big(\norm{\overline\kappa_a}_{L^2}+\norm{\overline\kappa_b}_{L^2}+\norm{\kappa_0}_{L^2}+1\big)^2\,dt' \\
&\qquad + c\int_0^t
\max\{(t-t')^{-(m+1)/4},1\}\, e^{-(t-t')\Lambda} \norm{\xi_a-\xi_b}_{\dot H^1}(\norm{\overline\kappa_a}_{\dot H^1}^2+\norm{\overline\kappa_b}_{\dot H^1}^2+\norm{\kappa_0}_{H^1}^2)\, dt' \\
&\quad \le c\int_0^t
\max\{(t-t')^{-(m+1)/4},1\}\,\max\{(t')^{-3/4},1\}\, e^{-(t-t')\Lambda}\,dt'\,M_1^2\bigg(\norm{\xi_a-\xi_b}_{\mc{Y}_1}\\
&\qquad + M_0^2\norm{\overline\kappa_a-\overline\kappa_b}_{\mc{Y}_1}+M_1M_0^2\norm{\overline\kappa_a-\overline\kappa_b}_{\mc{Y}_0}\bigg) \\
&\quad \le c\,
\max\{t^{-m/4},1 \}\,M_1^2\bigg(\norm{\xi_a-\xi_b}_{\mc{Y}_1}+M_0^2\norm{\overline\kappa_a-\overline\kappa_b}_{\mc{Y}_1}+M_1M_0^2\norm{\overline\kappa_a-\overline\kappa_b}_{\mc{Y}_0}\bigg)\,.
\end{align*}

We thus have 
\begin{align*}
&\norm{\Psi\bigg[\begin{pmatrix}
\overline\kappa_a \\
\xi_a 
\end{pmatrix}\bigg]-\Psi\bigg[\begin{pmatrix}
\overline\kappa_b \\
\xi_b 
\end{pmatrix}\bigg]}_{\mc{Y}_0\times \mc{Y}_0} \\
&\qquad \le c\,M_1^2\bigg(\norm{\xi_a-\xi_b}_{\mc{Y}_1}+M_0^2\norm{\overline\kappa_a-\overline\kappa_b}_{\mc{Y}_1}+M_1M_0^2\norm{\overline\kappa_a-\overline\kappa_b}_{\mc{Y}_0}\bigg)\,,\\ 
&\norm{\Psi\bigg[\begin{pmatrix}
\overline\kappa_a \\
\xi_a 
\end{pmatrix}\bigg]-\Psi\bigg[\begin{pmatrix}
\overline\kappa_b \\
\xi_b 
\end{pmatrix}\bigg]}_{\mc{Y}_1\times \mc{Y}_1} \\
&\qquad \le c\,M_1^2\bigg(\norm{\xi_a-\xi_b}_{\mc{Y}_1}+M_0^2\norm{\overline\kappa_a-\overline\kappa_b}_{\mc{Y}_1}+M_1M_0^2\norm{\overline\kappa_a-\overline\kappa_b}_{\mc{Y}_0}\bigg)\,.
\end{align*}
For sufficiently small $M_0,M_1<1$, we obtain a contraction on $B_{M_0}(\mc{Y}_0\times\mc{Y}_0)\cap B_{M_1}(\mc{Y}_1\times\mc{Y}_1)$, thus proving Theorem \ref{thm:wellposed} for $\kappa_0\not\equiv0$. \\

If $\kappa_0\equiv0$, we may replace the norms in the definition \eqref{eq:Y0Y1} of $\mc{Y}_0$ and $\mc{Y}_1$ with the exponentially weighted norms
\begin{equation}\label{eq:Y0Y1tilde}
\norm{\cdot}_{\wh{\mc{Y}_0}}:= \sup_{t\in [0,T]} e^{-t\Lambda}\norm{\cdot}_{L^2(I)} \,, \quad \norm{\cdot}_{\wh{\mc{Y}_1}}:=\sup_{t\in [0,T]} \min\{t^{1/4},1\}\,e^{-t\Lambda}\norm{\cdot}_{\dot H^1(I)}\,,
\end{equation}
where $\Lambda$ is given by Lemma \ref{lem:semigrp}. We obtain analogous estimates to \eqref{eq:Y0Y0bd} and \eqref{eq:Y1Y1bd} in $\wh{\mc{Y}_0}\times\wh{\mc{Y}_0}$ and $\wh{\mc{Y}_1}\times\wh{\mc{Y}_1}$, except, crucially, no term depending on $\dot\kappa_0$, allowing for the desired time decay. 
\end{proof}

\subsection{Existence of a unique periodic solution}\label{subsec:periodic}
We next consider solutions to the system \eqref{eq:kappadot}-\eqref{eq:BCs} when the internal fiber forcing $\kappa_0$ is $T$-periodic in time. We prove Theorem \ref{thm:per_deb} in two parts: in this section, we prove parts (a) and (b) on the existence of a unique periodic solution $(\overline\kappa,\xi)$, and in section \ref{subsec:deborah} we show part (c) concerning the limiting behavior of this periodic solution as $\delta\to 0$. \\

To prove parts (a) and (b) of Theorem \ref{thm:per_deb}, we begin by considering the following linear PDE, where $\kappa_0$ and $g$ are both given, $T$-periodic functions:
\begin{equation}\label{eq:linear_per}
\begin{aligned}
\dot{\overline\kappa}&= -(1+\mu)\overline\kappa_{ssss}+\mu\xi_{ssss} - \dot\kappa_0+g_s \\
\delta\dot\xi &= -\xi+\overline\kappa\,,
\end{aligned}
\end{equation}
with boundary conditions as in \eqref{eq:BCs}. For the system \eqref{eq:linear_per}, we show the following lemma. 

\begin{lemma}\label{lem:lin_per}
There exists a constant $\varepsilon>0$ such that, given a $T$-periodic $\kappa_0\in C^1([0,T];L^2(I))$ satisfying 
\begin{equation}\label{est:dotkap0}
\sup_{t\in[0,T]} \norm{\dot\kappa_0}_{L^2} = \varepsilon_1 \le \varepsilon
\end{equation}
and a $T$-periodic $g(s,t)\in C([0,T];L^2(I))$ satisfying 
\begin{equation}\label{est:gfun}
\sup_{t\in[0,T]} \norm{g}_{L^2} = \varepsilon_2 \le \varepsilon \,,
\end{equation}
there exists a unique $T$-periodic solution to \eqref{eq:linear_per} satisfying
\begin{equation}\label{eq:lin_per_bd}
\sup_{t\in[0,T]}\norm{\begin{pmatrix}
\overline\kappa \\
\xi
\end{pmatrix}}_{H^1\times H^1} \le c\bigg(\sup_{t\in[0,T]}\norm{\dot\kappa_0}_{L^2}+\sup_{t\in[0,T]}\norm{g}_{L^2} \bigg) \,.
\end{equation}
\end{lemma}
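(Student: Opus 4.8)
The plan is to construct the periodic solution explicitly from the semigroup $e^{t\mc{A}}$ via a Neumann series, in the standard way for a linear parabolic equation with time-periodic forcing. Write the forcing in \eqref{eq:linear_per} as $F(t)=(-\dot\kappa_0+g_s,\,0)^{\rm T}$, so that the mild solution of \eqref{eq:linear_per}--\eqref{eq:BCs} with initial datum $U^{\rm in}=(\overline\kappa^{\rm in},\xi^{\rm in})^{\rm T}$ is $U(t)=e^{t\mc{A}}U^{\rm in}+\int_0^t e^{\mc{A}(t-t')}F(t')\,dt'$, with $\mc{A}$ as in \eqref{eq:Aop}. A $T$-periodic mild solution is precisely one with $U(T)=U(0)$, which, after unwinding the Duhamel formula and using the semigroup property, is equivalent to $(I-e^{T\mc{A}})\,U^{\rm in}=\Phi$ for $\Phi:=\int_0^T e^{\mc{A}(T-t')}F(t')\,dt'$.

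The first step is to invert $I-e^{T\mc{A}}$ on $L^2(I)\times L^2(I)$; here the exponential decay in Lemma~\ref{lem:semigrp} is essential. Since $\Lambda=\min\{\lambda_1,\frac{1}{\delta(1+\mu)}\}>0$ for fixed $\delta>0$, that lemma (with $m=j=0$) gives $\norm{e^{nT\mc{A}}}_{L^2\times L^2}\le c\,e^{-nT\Lambda}$, so $\sum_{n\ge0}e^{nT\mc{A}}$ converges in operator norm; because $(I-e^{T\mc{A}})\sum_{n=0}^N e^{nT\mc{A}}=I-e^{(N+1)T\mc{A}}\to I$ as $N\to\infty$ (and likewise on the other side), the sum equals $(I-e^{T\mc{A}})^{-1}$, with $\norm{(I-e^{T\mc{A}})^{-1}}_{L^2\times L^2}\le c\,(1-e^{-T\Lambda})^{-1}$. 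We then \emph{define} $U^{\rm in}:=(I-e^{T\mc{A}})^{-1}\Phi$ and $U(t)$ by the Duhamel formula above; by construction $U(T)=U(0)$, and by the semigroup property the resulting function extends to a $T$-periodic mild solution of \eqref{eq:linear_per}--\eqref{eq:BCs}. Uniqueness is immediate: the difference $W$ of two periodic solutions solves the homogeneous equation, so $W(t)=e^{t\mc{A}}W(0)$, and periodicity forces $(I-e^{T\mc{A}})W(0)=0$, hence $W\equiv0$.

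It remains to prove the estimate \eqref{eq:lin_per_bd}. First bound $\Phi$ in $L^2\times L^2$: apply Lemma~\ref{lem:semigrp} to the $\dot\kappa_0$-piece of $F$ (which contributes a factor $e^{-(T-t')\Lambda}$) and, with $j=1$, to the $g_s=\p_s g$-piece (contributing $\max\{(T-t')^{-1/4},1\}\,e^{-(T-t')\Lambda}$), then integrate in $t'$ against the integrable kernel $\max\{\sigma^{-1/4},1\}e^{-\sigma\Lambda}$; this gives $\norm{\Phi}_{L^2\times L^2}\le c(\varepsilon_1+\varepsilon_2)$, and hence $\norm{U^{\rm in}}_{L^2\times L^2}\le c(\varepsilon_1+\varepsilon_2)$. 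The same computation along the Duhamel formula (using $\norm{e^{t\mc{A}}}_{L^2\times L^2}\le c$) gives $\sup_{t\ge0}\norm{U(t)}_{L^2\times L^2}\le c(\varepsilon_1+\varepsilon_2)$. For the $\dot H^1$ bound uniform on $[0,T]$, the only delicate point is $t$ near $0$, where $e^{t\mc{A}}$ applied to the merely-$L^2$ datum $U^{\rm in}$ does not obviously lie in $H^1$; this is resolved by exploiting $T$-periodicity to propagate over a time interval bounded below. Concretely, for $t\in[0,T]$ write $U(t)=U(t+T)=e^{(T/2)\mc{A}}U(t+T/2)+\int_{t+T/2}^{t+T}e^{\mc{A}(t+T-t')}F(t')\,dt'$; the propagation time is $T/2$, so Lemma~\ref{lem:semigrp} bounds $\norm{e^{(T/2)\mc{A}}U(t+T/2)}_{\dot H^1\times\dot H^1}\le c\,\norm{U(t+T/2)}_{L^2\times L^2}\le c(\varepsilon_1+\varepsilon_2)$, while the Duhamel integral over the length-$T/2$ interval is bounded in $\dot H^1\times\dot H^1$ by the same type of estimate with $m=1$ (integrable kernels $\max\{\sigma^{-1/4},1\}$ and $\max\{\sigma^{-1/2},1\}$ for the two pieces of $F$). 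Combining with the $L^2$ bound yields $\sup_{t\in[0,T]}\norm{U(t)}_{H^1\times H^1}\le c(\varepsilon_1+\varepsilon_2)$, which is \eqref{eq:lin_per_bd}. The main work is bookkeeping---every convolution kernel produced by Lemma~\ref{lem:semigrp} in the $H^1$ estimates has $m+j\le2$, hence exponent $<1$ and so is integrable near $t'=t$---and the constants, though allowed to depend on $\delta,\mu,\gamma,T$, are independent of the data. (Since \eqref{eq:linear_per} is linear in $(\overline\kappa,\xi)$, the smallness thresholds $\varepsilon_1,\varepsilon_2\le\varepsilon$ are not actually used in this lemma; they are stated only to match the form in which it is applied in the proof of Theorem~\ref{thm:per_deb}.)
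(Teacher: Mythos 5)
Your proof is correct and rests on the same basic idea as the paper's---the Poincar\'e (time-$T$) map reduces the problem to solving $(I-e^{T\mc{A}})\,U^{\rm in}=\Phi$---but the execution differs in two worthwhile ways. First, the paper sets up $\Psi^T$ as a contraction on $B_M(H^1\times H^1)$, which (since $\Psi^T$ is affine with linear part $e^{T\mc{A}}$) effectively requires $\norm{e^{T\mc{A}}}<1$, i.e.\ the paper's proof explicitly assumes ``the period $T$ is large enough'' so that $c\,e^{-T\Lambda}\le\frac14$. Your Neumann-series argument sidesteps this: you only use that $\sum_n e^{nT\mc{A}}$ converges (which follows from $\norm{e^{nT\mc{A}}}\le c\,e^{-nT\Lambda}$ being summable in $n$, regardless of whether $\norm{e^{T\mc{A}}}<1$), and the telescoping identity $(I-e^{T\mc{A}})\sum_{n=0}^N e^{nT\mc{A}}=I-e^{(N+1)T\mc{A}}$ then gives invertibility for every $T>0$. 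This is a genuine, if small, improvement in generality; the paper's version could be repaired by applying its argument to $NT$ for $N$ large and then using uniqueness to recover $T$-periodicity, but it doesn't say so. Second, the paper works in $H^1\times H^1$ throughout, so the fixed-point datum $U^{\rm in}$ is already in $H^1$ and the $\dot H^1$ bound at $t=0$ is immediate; you instead obtain $U^{\rm in}\in L^2\times L^2$ and recover the uniform $\dot H^1$ bound by the clean trick of writing $U(t)=U(t+T)$ and propagating from $t+T/2$, so the smoothing time is bounded below by $T/2$. Both routes are sound; your bookkeeping of the kernels (all produced with $m+j\le 2$, hence exponent $\le 1/2<1$, integrable near $\sigma=0$) is correct, as is your observation that linearity makes the smallness hypotheses on $\kappa_0,g$ unnecessary for this lemma.
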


\begin{proof}
We consider the map $\Psi^T$ taking the initial data $(\overline\kappa^{\rm in},\xi^{\rm in})$ to the solution to \eqref{eq:linear_per} at time $T$, which may be written as 
\begin{align*}
\Psi^T\bigg[ \begin{pmatrix}
\overline\kappa^{\rm in} \\
\xi^{\rm in}
\end{pmatrix} \bigg] &= e^{T\mc{A}}\begin{pmatrix}
\overline\kappa^{\rm in} \\
\xi^{\rm in}
\end{pmatrix} -\int_0^Te^{(T-t')\mc{A}}\begin{pmatrix}
\dot\kappa_0 \\
0
\end{pmatrix}\, dt' + \int_0^Te^{(T-t')\mc{A}}\begin{pmatrix}
g_s \\
0
\end{pmatrix}\, dt'\,.
\end{align*}

We show that $\Psi^T$ maps $B_M(H^1\times H^1)$ to itself, where $B_M(H^1\times H^1)$ is as in \eqref{eq:ball}. Using Lemma \ref{lem:semigrp}, for $m=0,1$, we have 
\begin{equation}\label{eq:timeTest1}
\begin{aligned}
\norm{\Psi^T\bigg[ \begin{pmatrix}
\overline\kappa^{\rm in} \\
\xi^{\rm in}
\end{pmatrix} \bigg]}_{\dot H^m\times \dot H^m} 
&\le c\,e^{-T\Lambda}\norm{\begin{pmatrix}
\overline\kappa^{\rm in}\\
\xi^{\rm in}
\end{pmatrix}}_{\dot H^m\times \dot H^m}  +\int_0^T\norm{e^{(T-t')\mc{A}}\begin{pmatrix}
\dot\kappa_0 \\
0
\end{pmatrix}}_{\dot H^m\times \dot H^m}\, dt' \\
&\quad\qquad + \int_0^T\norm{e^{(T-t')\mc{A}}\begin{pmatrix}
g_s \\
0
\end{pmatrix}}_{\dot H^m\times \dot H^m}\, dt' \\
&\le c\,e^{-T\Lambda}\norm{\begin{pmatrix}
\overline\kappa^{\rm in}\\
\xi^{\rm in}
\end{pmatrix}}_{\dot H^m\times \dot H^m}\\
&\qquad +c\int_0^T
\max\{(T-t')^{-m/4},1\} \,
e^{-(T-t')\Lambda}\norm{\dot\kappa_0}_{L^2}\, dt'\\
&\qquad + \int_0^T
\max\{(T-t')^{-(m+1)/4},1\} \,e^{-(T-t')\Lambda}\norm{g}_{L^2}\, dt' \\
&\le c\bigg(e^{-T\Lambda}\norm{\begin{pmatrix}
\overline\kappa^{\rm in}\\
\xi^{\rm in}
\end{pmatrix}}_{\dot H^m\times \dot H^m} + \sup_{t\in [0,T]}\norm{\dot\kappa_0}_{L^2} + \sup_{t\in [0,T]}\norm{g}_{L^2} \bigg)\,.
\end{aligned}
\end{equation}

In particular, provided that the period $T$ is large enough that $c\,e^{-T\Lambda}\le \frac{1}{3}$, we may choose $\kappa_0$ and $g$ such that $c\,\sup_{t\in [0,T]}\norm{\dot\kappa_0}_{L^2}\le \frac{M}{3}$ and $c\,\sup_{t\in [0,T]}\norm{g}_{L^2}\le \frac{M}{3}$ to obtain
\begin{equation}\label{eq:timeTest2}
\norm{\Psi^T\bigg[ \begin{pmatrix}
\overline\kappa^{\rm in} \\
\xi^{\rm in}
\end{pmatrix} \bigg]}_{H^1\times H^1} \le \bigg(\frac{M}{3} + \frac{M}{3} +\frac{M}{3}\bigg) 
\le M \,.
\end{equation}
 
Furthermore, again using Lemma \ref{lem:semigrp}, we may obtain the following Lipschitz estimate:
\begin{align*}
&\norm{\Psi^T\bigg[ \begin{pmatrix}
\overline\kappa^{\rm in}_a \\
\xi^{\rm in}_a
\end{pmatrix} \bigg]-\Psi^T\bigg[ \begin{pmatrix}
\overline\kappa^{\rm in}_b \\
\xi^{\rm in}_b
\end{pmatrix} \bigg]}_{H^1\times H^1} 
= \norm{e^{T\mc{A}}\begin{pmatrix}
\overline\kappa^{\rm in}_a-\overline\kappa^{\rm in}_b \\
\xi^{\rm in}_a-\xi^{\rm in}_b
\end{pmatrix}}_{H^1\times H^1} \\
&\hspace{4cm} \le c\,e^{-T\Lambda}\norm{\begin{pmatrix}
\overline\kappa^{\rm in}_a-\overline\kappa^{\rm in}_b\\
\xi^{\rm in}_a-\xi^{\rm in}_b
\end{pmatrix}}_{H^1\times H^1}  
 \le \frac{1}{4}\norm{\begin{pmatrix}
\overline\kappa^{\rm in}_a-\overline\kappa^{\rm in}_b\\
\xi^{\rm in}_a-\xi^{\rm in}_b
\end{pmatrix}}_{H^1\times H^1} \,,
\end{align*}
as long as the period $T$ is sufficiently large. By the contraction mapping theorem, there exists a unique fixed point of the map $\Psi^T$, i.e. $\Psi^T\bigg[ \begin{pmatrix}
\overline\kappa^{\rm in} \\
\xi^{\rm in}
\end{pmatrix}\bigg] = \begin{pmatrix}
\overline\kappa^{\rm in} \\
\xi^{\rm in}
\end{pmatrix}$, corresponding to a unique $T$-periodic solution $(\overline\kappa,\xi)$ to \eqref{eq:linear_per}. \\

In addition, using \eqref{eq:timeTest1} and \eqref{eq:timeTest2}, the $T$-periodic solution $(\overline\kappa,\xi)$ satisfies
\begin{equation}\label{eq:inbound}
\norm{\begin{pmatrix}
\overline\kappa^{\rm in} \\
\xi^{\rm in} 
\end{pmatrix}}_{H^1\times H^1} \le c\bigg(\sup_{t\in[0,T]}\norm{\dot\kappa_0}_{L^2}+\sup_{t\in[0,T]}\norm{g}_{L^2} \bigg) \,.
\end{equation}

To obtain the bound \eqref{eq:lin_per_bd}, we may use Duhamel's formula to write $(\overline\kappa,\xi)$ as
\begin{align*}
\begin{pmatrix}
\overline\kappa \\
\xi 
\end{pmatrix} &= e^{\mc{A}t}\begin{pmatrix}
\overline\kappa^{\rm in} \\
\xi^{\rm in} 
\end{pmatrix} - \int_0^te^{\mc{A}(t-t')}\begin{pmatrix}
 \dot\kappa_0 \\
0
\end{pmatrix}\, dt'+ \int_0^te^{\mc{A}(t-t')}\begin{pmatrix}
 g_s \\
0
\end{pmatrix}\, dt' \,.
\end{align*}

Then, as for the time-$T$ map \eqref{eq:timeTest1}, but now for any $t\in[0,T]$ and $m=0,1$, we have
\begin{align*}
\norm{\begin{pmatrix}
\overline\kappa \\
\xi 
\end{pmatrix}}_{\dot H^m\times \dot H^m} 
&\le c\,e^{-t\Lambda}\norm{\begin{pmatrix}
\overline\kappa^{\rm in} \\
\xi^{\rm in} 
\end{pmatrix}}_{\dot H^m\times \dot H^m} 
 +c\int_0^t\max\{(t-t')^{-m/4},1\} \,e^{-(t-t')\Lambda}\norm{\dot\kappa_0}_{L^2}\, dt' \\
&\qquad + \int_0^t\max\{(t-t')^{-(m+1)/4},1\}\,e^{-(t-t')\Lambda}\norm{g}_{L^2}\, dt' \\
&\le c\bigg(e^{-t\Lambda}\norm{\begin{pmatrix}
\overline\kappa^{\rm in} \\
\xi^{\rm in} 
\end{pmatrix}}_{\dot H^m\times \dot H^m} + \sup_{t\in [0,T]}\norm{\dot\kappa_0}_{L^2} + \sup_{t\in [0,T]}\norm{g}_{L^2} \bigg)\,.
\end{align*}
Using \eqref{eq:inbound}, we obtain \eqref{eq:lin_per_bd}.
\end{proof}

We now show parts (a) and (b) of Theorem \ref{thm:per_deb}. 

\begin{proof}[Proof of Theorem \ref{thm:per_deb}, parts (a) \& (b)]
We will use Lemma \ref{lem:lin_per}.
Let $\mc{A}_T^{\rm per}$ denote the solution operator mapping $(-\dot\kappa_0+g_s,0)^{\rm T}$ to the unique periodic $(\overline\kappa,\xi)$:
\begin{align*}
\begin{pmatrix}
\overline\kappa \\
\xi
\end{pmatrix} &= \mc{A}^{\rm per}_T\bigg[\begin{pmatrix}
-\dot\kappa_0+g_s\\
0
\end{pmatrix}\bigg]\,.
\end{align*}

We will consider $g(\overline\kappa,\xi,\kappa_0)= (1+\mu)\mc{N}[\overline\kappa,\kappa_0] - \mu(1+\gamma)(\overline\kappa+\kappa_0)^2\xi_s$, i.e. the nonlinear terms from \eqref{eq:kappadot}, and show that, given $\kappa_0$, the operator $\mc{A}^{\rm per}_T$ admits a unique fixed point in the space $\mc{X}_T\times \mc{X}_T$, where 
\begin{align*}
\mc{X}_T := \big\{ u\in C([0,T];H^1(I)) \, : \, u \text{ is $T$-periodic} \big\}\,,\quad \norm{\cdot}_{\mc{X}_T}:= \sup_{t\in[0,T]}\norm{u}_{H^1(I)}\,.
\end{align*}

We show that $\mc{A}_T^{\rm per}$ maps the ball $B_M(\mc{X}_T,\mc{X}_T)$ \eqref{eq:ball} to itself for some $M>0$. For $g$ as above, we first note that, by \eqref{eq:Nl2est}, we have
\begin{equation}\label{eq:g_bound}
\norm{g}_{L^2(I)} \le c\bigg((\norm{\overline\kappa}_{H^1}^3+\norm{\kappa_0}_{H^1}^3)(\norm{\overline\kappa}_{L^2}+\norm{\kappa_0}_{L^2}+1)+\norm{\xi}_{H^1}(\norm{\overline\kappa}_{H^1}^2+\norm{\kappa_0}_{H^1}^2) \bigg)\,.
\end{equation}

Then, using Lemma \ref{lem:lin_per}, taking $\kappa_0$ such that $\norm{\kappa_0}_{\mc{X}_T}=c_1 M$, for $(\overline\kappa,\xi)\in B_M(\mc{X}_T,\mc{X}_T)$ we have
\begin{equation}\label{eq:ATper_est}
\begin{aligned}
\norm{\mc{A}^{\rm per}_T\bigg[\begin{pmatrix}
-\dot\kappa_0+g_s\\
0
\end{pmatrix}\bigg]}_{\mc{X}_T\times\mc{X}_T} 
&\le c\bigg(\sup_{t\in[0,T]}\norm{\dot\kappa_0}_{L^2}+\norm{\overline\kappa}_{\mc{X}_T}^4+\norm{\kappa_0}_{\mc{X}_T}^4+ \norm{\overline\kappa}_{\mc{X}_T}^3+\norm{\kappa_0}_{\mc{X}_T}^3\\
&\qquad+\norm{\xi}_{\mc{X}_T}(\norm{\overline\kappa}_{\mc{X}_T}^2+\norm{\kappa_0}_{\mc{X}_T}^2) \bigg)\\
&\le \bigg(\frac{M}{2} + c(M^4+M^3)\bigg) \le M\,,
\end{aligned}
\end{equation}
where we have taken $c\,\sup_{t\in[0,T]}\norm{\dot\kappa_0}_{L^2}= c_2M$ for $c_2\le \frac{1}{2}$ and $M$ sufficiently small. \\

To show that $\mc{A}_T^{\rm per}$ is a contraction on $B_M(\mc{X}_T,\mc{X}_T)$, we note that from \eqref{eq:lip_est1} and \eqref{eq:lip_est2}, given two pairs $(\overline\kappa_a,\xi_a)$, $(\overline\kappa_b,\xi_b)$ and defining $g_a=g(\overline\kappa_a,\xi_a,\kappa_0)$, $g_b=g(\overline\kappa_b,\xi_b,\kappa_0)$, we have 
\begin{align*}
\norm{g_a - g_b}_{L^2} &\le
 c\bigg(\norm{\overline\kappa_a-\overline\kappa_b}_{H^1}+\norm{\overline\kappa_a-\overline\kappa_b}_{L^2}\big(\norm{\overline\kappa_a}_{H^1}+\norm{\kappa_0}_{H^1}\big)\bigg)\bigg( \norm{\overline\kappa_a}_{H^1}^2+ \norm{\overline\kappa_b}_{H^1}^2+\norm{\kappa_0}_{H^1}^2
\\
&\quad +(\norm{\xi_a}_{H^1}+\norm{\xi_b}_{H^1})\big(\norm{\overline\kappa_a}_{H^1}+\norm{\overline\kappa_b}_{H^1}+\norm{\kappa_0}_{H^1}\big)\bigg)\big(\norm{\overline\kappa_a}_{L^2}+\norm{\overline\kappa_b}_{L^2}+\norm{\kappa_0}_{L^2}+1\big)^2 \\
&\quad +c\norm{\xi_a-\xi_b}_{H^1}\big(\norm{\overline\kappa_a}_{H^1}^2+\norm{\overline\kappa_b}_{H^1}^2+\norm{\kappa_0}_{H^1}^2)\,.
\end{align*}

We then have
\begin{align*}
\norm{\mc{A}^{\rm per}_T\bigg[\begin{pmatrix}
(g_a-g_b)_s\\
0
\end{pmatrix}\bigg]}_{\mc{X}_T\times\mc{X}_T} 
%
%
%
%
&\le c\bigg(M^2\big(M^3+1\big)\norm{\overline\kappa_a-\overline\kappa_b}_{\mc{X}_T} + M^2\norm{\xi_a-\xi_b}_{\mc{X}_T}\bigg) \\
&\le \frac{1}{4}\norm{\begin{pmatrix}
\overline\kappa_a-\overline\kappa_b\\
\xi_a-\xi_b
\end{pmatrix}}_{\mc{X}_T\times\mc{X}_T}
\end{align*}
for $M$ sufficiently small, yielding a contraction on $B_M(\mc{X}_T,\mc{X}_T)$.\\

For part (b) of Theorem \ref{thm:per_deb}, we note that $(\overline\kappa^{\rm lin},\xi^{\rm lin})$ is the solution to \eqref{eq:linear_per} with $g=0$. In particular, using the bound \eqref{eq:g_bound} on $g(\overline\kappa,\xi,\kappa_0)= (1+\mu)\mc{N}[\overline\kappa,\kappa_0] - \mu(1+\gamma)(\overline\kappa+\kappa_0)^2\xi_s$ and the estimate \eqref{eq:ATper_est}, for the periodic solution $(\overline\kappa,\xi)$ of part (a), we have 
\begin{align*}
\norm{\begin{pmatrix}
\overline \kappa- \overline\kappa^{\rm lin}\\
\xi - \xi^{\rm lin}
\end{pmatrix}}_{\mc{X}_T\times\mc{X}_T} &=\norm{\mc{A}^{\rm per}_T\bigg[\begin{pmatrix}
-\dot\kappa_0+g_s\\
0
\end{pmatrix}-\mc{A}^{\rm per}_T\bigg[\begin{pmatrix}
-\dot\kappa_0\\
0
\end{pmatrix}\bigg]}_{\mc{X}_T\times\mc{X}_T} \\
&\le c\bigg(\norm{\overline\kappa}_{\mc{X}_T}^4+\norm{\kappa_0}_{\mc{X}_T}^4+ \norm{\overline\kappa}_{\mc{X}_T}^3+\norm{\kappa_0}_{\mc{X}_T}^3+\norm{\xi}_{\mc{X}_T}(\norm{\overline\kappa}_{\mc{X}_T}^2+\norm{\kappa_0}_{\mc{X}_T}^2) \bigg)\\
&\le c\,\varepsilon^3\,.
\end{align*}
\end{proof}

\subsection{Small relaxation time limit}\label{subsec:deborah}
We next show part (c) of Theorem \ref{thm:per_deb} concerning the behavior of the periodic solution $(\overline\kappa,\xi)$ of part (a) as the relaxation time $\delta\to 0$. To show that $(\overline\kappa,\xi)$ satisfies the estimate \eqref{est:small_delta}, we need the following lemma.

\begin{lemma}\label{lem:deltadiff}
For $f\in L^2(I)$, let 
\begin{equation}\label{eq:uphi_form}
\begin{pmatrix}
u_j\\
\phi_j
\end{pmatrix} = e^{t\mc{A}}\begin{pmatrix}
\p_s^jf\\
0
\end{pmatrix}\, \quad j=0,1\,.
\end{equation}
Then for $0\le m\le 4-j$ and $t\in(0,T]$, we have 
\begin{equation}
\norm{u_j-\phi_j}_{\dot H^m} \le c\, \delta^{1-(j+m)/4}\,\sup_{k}\bigg(\abs{\nu_k^-}e^{t\nu_k^-} + \abs{\nu_k^+}e^{t\nu_k^+} \bigg) \norm{f}_{L^2(I)}
\end{equation}
where 
\begin{equation}\label{eq:A_eval2}
\nu_k^\pm = \frac{1}{2\delta}\bigg(-\big(\delta(1+\mu)\lambda_k+1\big)\pm\sqrt{\big(\delta(1+\mu)\lambda_k+1\big)^2-4\delta\lambda_k} \bigg)
\end{equation}
are the eigenvalues of the matrix $\wt{\mc{A}}_k$ defined in \eqref{eq:Atildek}.
\end{lemma}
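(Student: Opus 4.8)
The plan is to diagonalize in the eigenbasis $\{\psi_k\}$ of $\mc{L}$ and compute the ``difference of components'' multiplier of the semigroup explicitly. Writing $f=\sum_k\wt f_k\psi_k$ and $(u_j,\phi_j)^{\rm T}=e^{t\mc{A}}(\p_s^jf,0)^{\rm T}$, I would first decompose $(1,0)^{\rm T}$ along the eigenvectors $\bv_k^\pm$ of $\wt{\mc{A}}_k$ as in \eqref{eq:A_decomp}, so that $e^{t\wt{\mc{A}}_k}(1,0)^{\rm T}=a_k^-\bv_k^-e^{t\nu_k^-}+a_k^+\bv_k^+e^{t\nu_k^+}$; subtracting the two components of $\bv_k^-$ and of $\bv_k^+$ and inserting the explicit values of $a_k^\pm$ gives, for $j=0$,
\begin{equation*}
u_0-\phi_0=\sum_{k=1}^\infty c_k(t)\,\wt f_k\,\psi_k\,,\qquad c_k(t):=\frac{\nu_k^+e^{t\nu_k^+}-\nu_k^-e^{t\nu_k^-}}{\nu_k^+-\nu_k^-}\,.
\end{equation*}
(Equivalently this encodes $u_j-\phi_j=\delta\,\p_t\phi_j$ coming from the second row of $\mc{A}$, but the explicit multiplier is what I would use.) The denominator never vanishes, since completing the square gives
\begin{equation*}
\big(1+(1+\mu)\delta\lambda_k\big)^2-4\delta\lambda_k=(1+\mu)^2\Big(\delta\lambda_k-\tfrac{1-\mu}{(1+\mu)^2}\Big)^2+\tfrac{4\mu}{(1+\mu)^2}>0\,;
\end{equation*}
in particular $\nu_k^\pm$ are real, distinct and negative, and $\delta\,|\nu_k^+-\nu_k^-|$ equals the square root of the left-hand side.

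For $j=1$ the one difficulty is that $\p_sf\notin L^2$ for general $f\in L^2$, and the derivative must be absorbed into a fractional power of $\mc{L}$ without losing any power of $\delta$. The point is that $\mc{L}^{-1/4}\p_s$ is bounded on $L^2(I)$: by duality this amounts to $\|\sum_k c_k\lambda_k^{-1/4}(\psi_k)_s\|_{L^2}=\|\sum_k c_k\lambda_k^{-1/4}\psi_k\|_{\dot H^1}\le c\,\|\mc{L}^{1/4}\sum_k c_k\lambda_k^{-1/4}\psi_k\|_{L^2}=c\,\|(c_k)\|_{\ell^2}$, using $D(\mc{L}^{1/4})\subseteq H^1$. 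Hence $(\p_sf,0)^{\rm T}=\mc{L}^{1/4}(\tilde f,0)^{\rm T}$ with $\tilde f:=\mc{L}^{-1/4}\p_sf\in L^2$ and $\|\tilde f\|_{L^2}\le c\,\|f\|_{L^2}$; since $\mathrm{diag}(\mc{L}^{1/4},\mc{L}^{1/4})$ commutes with $\mc{A}$ it commutes with $e^{t\mc{A}}$, so, setting $\tilde f:=\mc{L}^{-j/4}\p_s^jf$ for both $j\in\{0,1\}$ ($\tilde f=f$ for $j=0$), one obtains the uniform formula $u_j-\phi_j=\sum_k\lambda_k^{j/4}c_k(t)\,\tilde f_k\,\psi_k$ with $\|\tilde f\|_{L^2}\le c\,\|f\|_{L^2}$.

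The estimate then follows from the embedding $D(\mc{L}^{m/4})\subseteq H^m$. The bound on $c_k$ below shows $\sum_k\lambda_k^{(m+j)/2}|c_k(t)|^2\tilde f_k^2<\infty$, so $u_j-\phi_j\in D(\mc{L}^{m/4})$ and
\begin{equation*}
\|u_j-\phi_j\|_{\dot H^m}\le c\,\|\mc{L}^{m/4}(u_j-\phi_j)\|_{L^2}\le c\,\Big(\sup_k\lambda_k^{(m+j)/4}|c_k(t)|\Big)\|f\|_{L^2}\,.
\end{equation*}
Using $|c_k(t)|\le\big(|\nu_k^+|e^{t\nu_k^+}+|\nu_k^-|e^{t\nu_k^-}\big)/|\nu_k^+-\nu_k^-|$ together with $\delta\,|\nu_k^+-\nu_k^-|=\big((1+(1+\mu)\delta\lambda_k)^2-4\delta\lambda_k\big)^{1/2}$ reduces the supremum to the elementary inequality
\begin{equation*}
\frac{(\delta\lambda_k)^{(m+j)/4}}{\big((1+(1+\mu)\delta\lambda_k)^2-4\delta\lambda_k\big)^{1/2}}\le c\qquad(0\le m+j\le4)\,,
\end{equation*}
which holds because the denominator is $\ge\tfrac{4\mu}{(1+\mu)^2}$ and grows quadratically in $\delta\lambda_k$, while $(\delta\lambda_k)^{(m+j)/4}\le\max(1,\delta\lambda_k)$. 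Collecting the factor $\delta$ from $\delta\,|\nu_k^+-\nu_k^-|$ and the factor $\delta^{-(m+j)/4}$ from rewriting $\lambda_k^{(m+j)/4}$ as $(\delta\lambda_k)^{(m+j)/4}$ yields $\|u_j-\phi_j\|_{\dot H^m}\le c\,\delta^{1-(m+j)/4}\big(\sup_k(|\nu_k^-|e^{t\nu_k^-}+|\nu_k^+|e^{t\nu_k^+})\big)\|f\|_{L^2}$, which is the claim.

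I expect the $j=1$ case to be the only genuine obstacle: the gain of a full power of $\delta$ comes from dividing by $|\nu_k^+-\nu_k^-|\sim\delta^{-1}\max(1,\delta\lambda_k)$, and a naive treatment of $\p_sf$ — transposing $\p_s$ onto an adjoint semigroup as in the proof of Lemma \ref{lem:semigrp}, or estimating $(\p_sf,\psi_k)$ pointwise by $\|f\|_{L^2}\lambda_k^{1/4}$ — introduces an uncompensated $\lambda_k^{1/4}$ and costs a factor $\delta^{1/4}$. The factorization $\p_s=\mc{L}^{1/4}\circ(\mc{L}^{-1/4}\p_s)$ preserves the gain by turning the problem back into the $j=0$ computation with $m$ replaced by $m+1$; this is also where the hypothesis $m\le4-j$ enters, since it is exactly what makes $(m+j)/4\le1$ and keeps the elementary bound above valid.
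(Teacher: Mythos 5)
Your argument is essentially the paper's: expand in the $\mc L$-eigenbasis, compute the explicit ``difference'' multiplier $c_k(t)=(\nu_k^+e^{t\nu_k^+}-\nu_k^-e^{t\nu_k^-})/(\nu_k^+-\nu_k^-)$ (equivalently $a_k^\pm(v^\pm_{k,(1)}-v^\pm_{k,(2)})=\pm\nu_k^\pm/(\nu_k^+-\nu_k^-)$), and use $\lambda_k^r/|\nu_k^+-\nu_k^-|\le c\,\delta^{1-r}$ for $0\le r\le1$; your $\mc L^{-j/4}\p_s^j$ factorization is a clean, equivalent substitute for the paper's brief appeal to duality for $j=1$, and both reduce to the same bound on $\sup_k\lambda_k^{(m+j)/4}|c_k(t)|$. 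Two small slips that do not affect the conclusion: the lower bound on $\delta|\nu_k^+-\nu_k^-|$ is $2\sqrt\mu/(1+\mu)$, not $4\mu/(1+\mu)^2$ (you dropped a square root), and this denominator grows linearly, not quadratically, in $\delta\lambda_k$, which is still enough since $(\delta\lambda_k)^{(m+j)/4}\le\max(1,\delta\lambda_k)$; also, your concern that the paper's duality route loses $\delta^{1/4}$ is unfounded once the $(1,-1)$ structure is retained on the adjoint side, since $(1,-1)e^{t\wt{\mc A}_k}(1,0)^{\rm T}$ and $[e^{t\wt{\mc A}_k^*}(1,-1)^{\rm T}]_1$ are the same scalar.
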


\begin{proof}
It suffices to show that 
\begin{align*}
\begin{pmatrix}
w_j\\
\varphi_j
\end{pmatrix}&=\mc{L}^{j/4}e^{t\mc{A}}\begin{pmatrix}
f\\
0
\end{pmatrix}
\end{align*}
satisfies
\begin{align*}
\norm{w_j-\varphi_j}_{\dot H^m} \le c\, \delta^{1-(j+m)/4}\,\sup_{k}\bigg(\abs{\nu_k^-}e^{t\nu_k^-} + \abs{\nu_k^+}e^{t\nu_k^+} \bigg) \norm{f}_{L^2(I)}\,;
\end{align*}
Lemma \ref{lem:deltadiff} then follows by a duality argument as in the proof of Lemma \ref{lem:semigrp}.\\

We begin by recalling the decomposition \eqref{eq:A_decomp} in terms of eigenvectors of $\wt{\mc{A}}_k$; in particular
\begin{align*}
\begin{pmatrix}
1\\
0
\end{pmatrix} = a_k^{-}\bv^{-}_k + a_k^{+}\bv^{+}_k\,,\quad  a_k^{-}\bv_k^{-}&=-\frac{1}{\delta(\nu_k^{+}-\nu_k^{-})}\begin{pmatrix}
1+\delta \nu_k^{-} \\
1
\end{pmatrix} \,, \quad a_k^{+}\bv_k^{+}=\frac{1}{\delta(\nu_k^{+}-\nu_k^{-})}\begin{pmatrix}
1+\delta \nu_k^{+}\\
1
\end{pmatrix}\,,
\end{align*}
where
\begin{align*}
\delta(\nu_k^{+}-\nu_k^{-}) = \sqrt{(\delta(1+\mu)\lambda_k+1)^2-4\delta\lambda_k} \,.
\end{align*}

Note that for $0\le r\le 1$, we have
\begin{align*}
\frac{(\delta\lambda_k)^r}{\delta(\nu_k^{+}-\nu_k^{-})} 
= \frac{(\delta\lambda_k)^r}{\sqrt{(1-\delta\lambda_k)^2+2\mu\delta\lambda_k+(2\mu+\mu^2)\delta^2\lambda_k^2}} 
\le c
\end{align*}
for $c$ independent of both $\delta$ and $\lambda_k$. 
Letting $a_k^\pm v^\pm_{k,(1)}, a_k^\pm v^\pm_{k,(2)}$ denote the first and second component, respectively, of the vectors $a_k^\pm \bv_k^\pm$, for $0\le r\le 1$, we then have
\begin{align*}
\abs{a_k^{-}(v^-_{k,(1)} - v^-_{k,(2)})}\lambda_k^r &= \abs{\frac{\delta\nu_k^-}{\delta(\nu_k^{+}-\nu_k^{-})}}\lambda_k^r
=\le c\,\delta^{1-r}\abs{\nu_k^-}\,, \\
\abs{a_k^{+}(v^+_{k,(1)} - v^+_{k,(2)})}\lambda_k^r &= \abs{\frac{\delta\nu_k^+}{\delta(\nu_k^{+}-\nu_k^{-})}}\lambda_k^r 
\le c\,\delta^{1-r}\abs{\nu_k^+}\,.
\end{align*}

Using the decomposition \eqref{eq:A_decomp}, we then have
\begin{align*}
\norm{w_j-\varphi_j}_{\dot H^m} &\le
\norm{\sum_{k=1}^\infty\bigg(a_k^{-}(v^-_{k,(1)} - v^-_{k,(2)})e^{t\nu_k^-} + a_k^{+}(v^+_{k,(1)} - v^+_{k,(2)})e^{t\nu_k^+} \bigg)\lambda_k^{(j+m)/4}\wt{f}_k\psi_k}_{L^2} \\
&\le c\, \delta^{1-(j+m)/4}\,\sup_{k}\bigg(\abs{\nu_k^-}e^{t\nu_k^-} + \abs{\nu_k^+}e^{t\nu_k^+} \bigg)\norm{\sum_{k=1}^\infty \wt{f}_k\psi_k}_{L^2} \,.
\end{align*}
\end{proof}

Using Lemma \ref{lem:deltadiff}, we may now show part (c) of Theorem \ref{thm:per_deb}.
\begin{proof}[Proof of Theorem \ref{thm:per_deb}, part (c)]
We consider the $T$-periodic solution $(\overline\kappa,\xi)$ of part (a), which satisfies the bound \eqref{est:periodic}. Note that $c$ in \eqref{est:periodic} is bounded independent of $\delta$ as $\delta\to 0$, due to the $\delta$-independence of the constant $c$ in Lemma \ref{lem:semigrp}.\\

By $T$-periodicity, we have that $(\overline\kappa,\xi)$ at time $t\in[0,T]$ may be written
\begin{align*}
\begin{pmatrix}
\overline\kappa\\
\xi
\end{pmatrix} = e^{(t+NT)\mc{A}}\begin{pmatrix}
\overline\kappa^{\rm in}\\
\xi^{\rm in}
\end{pmatrix} +\begin{pmatrix}
\overline\kappa^{\rm f}\\
\xi^{\rm f}
\end{pmatrix}\,, \qquad
\begin{pmatrix}
\overline\kappa^{\rm f}\\
\xi^{\rm f}
\end{pmatrix}&:= \int_0^{t+NT} e^{(t+NT-t')\mc{A}} \begin{pmatrix}
-\dot\kappa_0+g_s\\
0
\end{pmatrix}\, dt'
\end{align*}
for any $N\in \N$, where $g(\overline\kappa,\xi,\kappa_0)= (1+\mu)\mc{N}[\overline\kappa,\kappa_0] - \mu(1+\gamma)(\overline\kappa+\kappa_0)^2\xi_s$. Recall that by \eqref{eq:g_bound} and the estimate \eqref{est:periodic} on $(\overline\kappa,\xi)$, we have that $\sup_{t\in[0,T]}\norm{g}_{L^2(I)}\le c\big(\sup_{t\in[0,T]}\norm{\dot\kappa_0}_{L^2} +\sup_{t\in[0,T]}\norm{\kappa_0}_{H^1} \big)$.\\

By Lemma \ref{lem:semigrp}, for $m=0,1$ we have 
\begin{align*}
\norm{e^{(t+NT)\mc{A}}\begin{pmatrix}
\overline\kappa^{\rm in}\\
\xi^{\rm in}
\end{pmatrix}}_{\dot H^m\times \dot H^m} 
&\le c\,e^{-(t+NT)\Lambda}\norm{\begin{pmatrix}
\overline\kappa^{\rm in}\\
\xi^{\rm in}
\end{pmatrix}}_{\dot H^m\times \dot H^m} 
\end{align*}
for $c$ independent of $\delta$ and $\Lambda=\min\{\lambda_1,\frac{1}{\delta(1+\mu)}\}$. In particular, for sufficiently small $\delta$, we have $\Lambda=\lambda_1$. Note that since $(\overline\kappa,\xi)\in H^1\times H^1$, we use $\dot H^m\times \dot H^m$, $m=0,1$ on the right hand side. For any (small) $\delta$, we may choose $N=N_\delta$ large enough that
\begin{equation}\label{eq:del_bd1}
\norm{e^{(t+N_\delta T)\mc{A}}\begin{pmatrix}
\overline\kappa^{\rm in}\\
\xi^{\rm in}
\end{pmatrix}}_{H^1\times H^1} 
\le \delta\,.
\end{equation}

Furthermore, for $N=N_\delta$ as above, by Lemma \ref{lem:deltadiff} we may estimate the difference $\overline\kappa^{\rm f}-\xi^{\rm f}$ as
\begin{equation}\label{eq:del_bd2}
\begin{aligned}
\norm{\overline\kappa^{\rm f}-\xi^{\rm f}}_{\dot H^m(I)} &\le c\,\delta^{1-m/4}\int_0^{t+N_\delta T} \,\sup_{k}\bigg(\abs{\nu_k^-}e^{t\nu_k^-} + \abs{\nu_k^+}e^{t\nu_k^+} \bigg) \norm{\dot\kappa_0}_{L^2(I)}\,dt' \\
&\qquad
+ c\,\delta^{1-(1+m)/4}\int_0^{t+N_\delta T} \sup_{k}\bigg(\abs{\nu_k^-}e^{t\nu_k^-} + \abs{\nu_k^+}e^{t\nu_k^+} \bigg) \norm{g}_{L^2(I)}\,dt' \\
&\le c\,\delta^{1-(1+m)/4}\bigg((1+\delta^{1/4})\sup_{t\in[0,T]}\norm{\dot\kappa_0}_{L^2} +\sup_{t\in[0,T]}\norm{\kappa_0}_{H^1} \bigg)\,.
\end{aligned}
\end{equation}
Here we have integrated in time and used the $T$-periodicity of both $\kappa_0$ and $g$ to take the supremum only over time $t\in[0,T]$. 
Combining \eqref{eq:del_bd1} and \eqref{eq:del_bd2}, as $\delta\to 0$ we obtain 
\begin{align*}
\norm{\overline\kappa-\xi}_{H^1(I)} &\le \delta + \norm{\overline\kappa^{\rm f}-\xi^{\rm f}}_{H^1(I)} 
\le \delta + c\,\delta^{1/2}\big(\sup_{t\in[0,T]}\norm{\dot\kappa_0}_{L^2} +\sup_{t\in[0,T]}\norm{\kappa_0}_{H^1} \big)\,.
\end{align*}
\end{proof}

\section{Viscoelastic swimming}\label{sec:VEswim}
In this section we give a proof of the fiber swimming expressions in Theorem \ref{thm:VEswimming}. We will first need a brief lemma. A version of this lemma also appears in the Newtonian case \cite{RFTpaper} and states that, given some additional regularity on our (small) $\kappa_0$, we can ensure that the fiber frame $(\be_{\rm t},\be_{\rm n})$ is not varying much over time. 

\begin{lemma}\label{lem:frame}
Suppose that $\kappa_0\in C^1([0,T];H^3(I))$ is $T$-periodic and satisfies
\begin{align*}
 \sup_{t\in[0,T]} \norm{\dot\kappa_0}_{L^2} = \varepsilon_1 \le \varepsilon\,, \qquad \sup_{t\in[0,T]} \norm{\kappa_0}_{H^1} = \varepsilon_2 \le \varepsilon\,,
 \end{align*} 
 for some $0<\varepsilon<1$, and let $(\overline\kappa,\xi)$ be the corresponding $T$-periodic solution to \eqref{eq:kappadot}-\eqref{eq:BCs}. The evolution of the fiber tangent vector $\be_{\rm t}$ \eqref{eq:frame_ev} then satisfies
 \begin{equation}
 \sup_{t\in[0,T]}\norm{\be_{\rm t}(\cdot,t)-\be_{\rm t}(0,0)}_{L^2(I)} \le c\,\varepsilon\,.
 \end{equation}
\end{lemma}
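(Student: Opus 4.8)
The plan is to estimate $\be_{\rm t}(\cdot,t)-\be_{\rm t}(0,0)$ by integrating the frame evolution equation \eqref{eq:frame_ev} in time and controlling the integrand $\dot\theta\,\be_{\rm n}$ in $L^2(I)$, uniformly in $t$. Since $\abs{\be_{\rm n}}=1$ pointwise, we have $\norm{\dot\theta\,\be_{\rm n}(\cdot,t)}_{L^2(I)} = \norm{\dot\theta(\cdot,t)}_{L^2(I)}$, so it suffices to show $\sup_{t\in[0,T]}\norm{\dot\theta(\cdot,t)}_{L^2(I)}\le c\varepsilon$ and then write, for any $t\in[0,T]$,
\begin{align*}
\norm{\be_{\rm t}(\cdot,t)-\be_{\rm t}(0,0)}_{L^2(I)} \le \int_0^t \norm{\dot\theta(\cdot,t')\be_{\rm n}(\cdot,t')}_{L^2(I)}\, dt' \le T\sup_{t'\in[0,T]}\norm{\dot\theta(\cdot,t')}_{L^2(I)}\le c\,\varepsilon\,,
\end{align*}
where the constant absorbs the fixed period $T$.

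The main work is therefore the bound on $\dot\theta$. I would use the recovery formula \eqref{eq:theta_recover}, namely
\begin{align*}
\dot\theta = -(1+\mu)\overline\kappa_{sss}-\dot\kappa_0 + (1+\mu)\mc{N}[\overline\kappa,\kappa_0] +\mu\xi_{sss}- \mu(1+\gamma)(\overline\kappa+\kappa_0)^2\xi_s\,,
\end{align*}
and estimate each term in $L^2(I)$ using the periodic bounds already established. The terms $\overline\kappa_{sss}$ and $\xi_{sss}$ are the obstacle: the periodic estimate \eqref{est:periodic} only gives $H^1(I)$ control of $(\overline\kappa,\xi)$, which is not enough for three derivatives. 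To close this, I would upgrade the regularity of the periodic solution. Because $\kappa_0\in C^1([0,T];H^3(I))$, a bootstrap using the smoothing estimate \eqref{est:semigrp1} of Lemma \ref{lem:semigrp} applied to the Duhamel representation \eqref{eq:duhamel} (as in the proof of Theorem \ref{thm:wellposed}, but now propagating through a full period and using $T$-periodicity to control initial data) promotes $(\overline\kappa,\xi)$ to $C([0,T];H^3(I))$ with norm $\le c(\varepsilon_1+\varepsilon_2)$; here one also needs the higher-regularity version of the tension estimate, obtained by differentiating \eqref{eq:taueq} and reapplying Lemma \ref{lem:tension}-type arguments. Given this, $\norm{\overline\kappa_{sss}}_{L^2}$, $\norm{\xi_{sss}}_{L^2}\le c\varepsilon$; the quadratic/cubic nonlinear terms $\mc{N}$ and $(\overline\kappa+\kappa_0)^2\xi_s$ are then bounded in $L^2$ by products of these $H^3$-controlled quantities and are $O(\varepsilon^2)$; and $\norm{\dot\kappa_0}_{L^2}\le\varepsilon_1\le\varepsilon$ directly. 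Summing gives $\sup_t\norm{\dot\theta}_{L^2}\le c\varepsilon$.

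The hard part is the higher-regularity bootstrap for the periodic solution: one must check that the time-weighted smoothing estimates do not degrade when iterated over a period and that the tension contributes no loss of derivatives beyond what Lemma \ref{lem:tension} already controls (differentiating \eqref{eq:taueq} once or twice and re-running the Lax–Milgram argument). Everything else is a routine application of the product estimates and the triangle inequality, with the fixed period $T$ swallowed into the constant.
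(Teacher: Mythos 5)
Your main idea and the intermediate target $\sup_t\norm{\dot\theta}_{L^2}\le c\varepsilon$ match the paper's proof, and the route to that bound via the recovery formula \eqref{eq:theta_recover} together with $H^3$ regularity of the periodic $(\overline\kappa,\xi)$ is also the paper's route. However, there is a genuine gap in the very first inequality: $\be_{\rm t}(0,0)$ is the tangent vector at the single point $(s,t)=(0,0)$, not the initial tangent \emph{field} $\be_{\rm t}(\cdot,0)$. Integrating $\dot\be_{\rm t}=\dot\theta\be_{\rm n}$ in time controls only $\be_{\rm t}(\cdot,t)-\be_{\rm t}(\cdot,0)$, so your first displayed inequality is false as written --- at $t=0$ its left-hand side is $\norm{\be_{\rm t}(\cdot,0)-\be_{\rm t}(0,0)}_{L^2}$, which is nonzero for a curved initial configuration, while the right-hand side vanishes. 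The paper handles this by a triangle-inequality split and bounds the spatial piece separately using $(\be_{\rm t})_s=\kappa\be_{\rm n}$, which gives $\norm{\be_{\rm t}(\cdot,0)-\be_{\rm t}(0,0)}_{L^2}\le c\norm{\kappa}_{L^2}\le c\varepsilon$. You need to add this step.

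On the regularity upgrade: your instinct that $\kappa_0\in H^3$ must be parlayed into $H^3$ control of $(\overline\kappa,\xi)$ via the Duhamel representation and the semigroup smoothing of Lemma \ref{lem:semigrp} is exactly what the paper does (reusing estimates \eqref{eq:H3term1}--\eqref{eq:H3term2} with $m$ up to $3$, and invoking $T$-periodicity to remove the short-time singular weight). One caveat: the ``higher-regularity version of the tension estimate'' you flag as a potential obstacle is not actually needed. The nonlinearity enters the Duhamel integral as $\big(\mc{N}\big)_s$, and the semigroup absorbs both the extra $\p_s$ and the derivatives needed in the output norm (the condition $m+j\le 4$ in Lemma \ref{lem:semigrp}); only the $L^2$ bound on $\mc{N}$ --- hence only $H^1$ control of $\overline\tau$ from Lemma \ref{lem:tension} as stated --- is required. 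So you can drop the proposed differentiation of \eqref{eq:taueq}.
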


\begin{proof}
Since $\kappa_0\in H^3(I)$, we may use estimates \eqref{eq:H3term1} and \eqref{eq:H3term2} for the Duhamel formula \eqref{eq:duhamel} for $(\overline\kappa,\xi)$ along with Lemma \ref{lem:semigrp} to show  
\begin{align*}
\sup_{t\in[0,T]}\min\{t^{m/4},1\} \norm{\begin{pmatrix}
\overline\kappa \\
\xi
\end{pmatrix}}_{\dot H^m\times \dot H^m} 
\le c \,\norm{\begin{pmatrix}
\overline\kappa^{\rm in}\\
\xi^{\rm in}
\end{pmatrix}}_{L^2\times L^2}\,, \quad 0\le m\le 3\,.
\end{align*}
Due to the $T$-periodicity of $\kappa_0$, we in fact have
\begin{align*}
\sup_{t\in[0,T]} \norm{\begin{pmatrix}
\overline\kappa \\
\xi
\end{pmatrix}}_{\dot H^m\times \dot H^m} 
\le c \,\norm{\begin{pmatrix}
\overline\kappa^{\rm in}\\
\xi^{\rm in}
\end{pmatrix}}_{L^2\times L^2}\le c\,\varepsilon\,, \quad 0\le m\le 3\,.
\end{align*}

Using the equation \eqref{eq:theta_recover} for $\dot\theta$ in the frame evolution \eqref{eq:frame_ev}, we thus have
\begin{align*}
\sup_{t\in[0,T]}\norm{\be_{\rm t}(\cdot,t)-\be_{\rm t}(\cdot,0)}_{L^2(I)} &\le c\sup_{t\in[0,T]}\|\dot\theta\|_{L^2(I)}\\
&\le c\sup_{t\in[0,T]}\big(\norm{\overline\kappa_{sss}}_{L^2(I)}+\norm{\dot\kappa_0}_{L^2(I)} + \norm{\xi_{sss}}_{L^2(I)}+ \varepsilon^2\big) \\
&\le c\, \varepsilon\,.
\end{align*}

In addition, since $(\be_{\rm t})_s=\kappa\be_{\rm n}$, we have
\begin{align*}
\norm{\be_{\rm t}(\cdot,0)-\be_{\rm t}(0,0)}_{L^2(I)} &\le c\norm{\kappa}_{L^2(I)} \le c\,\varepsilon\,.
\end{align*}
Together, these two estimates give Lemma \ref{lem:frame}.
\end{proof}

Equipped with Lemma \ref{lem:frame}, we may now prove Theorem \ref{thm:VEswimming}.

\begin{proof}[Proof of Theorem \ref{thm:VEswimming}]
It will be convenient to define the difference $z:=\overline\kappa-\xi$ and work in terms of $\overline\kappa$ and $z$ rather than $\overline\kappa$ and $\xi$. Using the definition of $z$ and the equation \eqref{eq:VEoriginal1} for $\frac{\p\X}{\p t}$, we may calculate the velocity of the swimming fiber as 
\begin{align*}
\bm{V}(t) &= \int_0^1\frac{\p\X}{\p t}(s,t)\,ds = \bm{V}_{\rm vis}(t)+\bm{V}_{\rm ve}(t)\,,
\end{align*}
where $\bm{V}_{\rm vis}$ and $\bm{V}_{\rm ve}$ are given by
\begin{align*}
\bm{V}_{\rm vis}(t) &:= -\int_0^1\big({\bf I}+\gamma\be_{\rm t}\be_{\rm t}^{\rm T}\big) \big(-(\overline\kappa^2+2\overline\kappa\kappa_0)\be_{\rm t}+\overline\kappa_s\be_{\rm n}-\overline\tau\be_{\rm t}\big)_s \, ds \\
&=\gamma\int_0^1 \big(3\overline\kappa\overline\kappa_s+3\overline\kappa_s\kappa_0+2\overline\kappa(\kappa_0)_s+\overline\tau_s \big)\be_{\rm t} \, ds \,;\\
\bm{V}_{\rm ve}(t) &:=-\mu\int_0^1\big({\bf I}+\gamma\be_{\rm t}\be_{\rm t}^{\rm T}\big) \big(-(\overline\kappa^2+2\overline\kappa\kappa_0)\be_{\rm t}-\overline\tau\be_{\rm t}+z_s\be_{\rm n}\big)_s \, ds \\
&= \gamma\mu\int_0^1 \big((\overline\kappa^2+2\overline\kappa\kappa_0)_s+\overline\tau_s+z_s\overline\kappa
+z_s\kappa_0\big)\be_{\rm t} \, ds \,.
\end{align*}

Then, using Lemma \ref{lem:frame} along with the vanishing boundary conditions for $\overline\kappa$, $\overline\tau$, and $z$, for small $\kappa_0$ we may write
\begin{align*}
\bm{V}_{\rm vis}(t) &= \gamma\int_0^1 \kappa_0\overline\kappa_s\be_{\rm t}(0,0) \, ds + \bm{r}_{\rm vis}(t) = -\gamma\int_0^1 (\kappa_0)_s\overline\kappa \, ds\, \be_{\rm t}(0,0) + \bm{r}_{\rm vis}(t)\,,\\
\bm{V}_{\rm ve}(t) &= -\gamma\mu\int_0^1 z(\overline\kappa+\kappa_0)_s \, ds \, \be_{\rm t}(0,0)+ \bm{r}_{\rm ve}(t)\,,
\end{align*}
where both 
\begin{align*}
\abs{\bm{r}_{\rm vis}(t)} \le c\varepsilon^3\,, \quad \abs{\bm{r}_{\rm ve}(t)} \le c\varepsilon^3\,.
\end{align*}

We thus have 
\begin{align*}
\bm{V}(t)= \bigg( -\gamma\int_0^1 (\kappa_0)_s\overline\kappa\, ds -\gamma\mu\int_0^1z(\overline\kappa+\kappa_0)_s \, ds\bigg)\be_{\rm t}(0,0) + \bm{r}_{\rm vis}(t)+\bm{r}_{\rm ve}(t)\,,
\end{align*}
from which we obtain the first swimming expression \eqref{est:swimspeed1}-\eqref{est:Ut}. \\

To obtain the second expression \eqref{est:swimspeed2}, we first note that by Theorem \ref{thm:per_deb}, part (b), we may approximate $\overline\kappa$ and $z=\overline\kappa-\xi$ by $\overline\kappa^{\rm lin}$ and $z^{\rm lin}=\overline\kappa^{\rm lin}-\xi^{\rm lin}$, the $T$-periodic solutions to the linear equations
\begin{equation}\label{eq:kapz_lin}
\begin{aligned}
\dot{\overline\kappa}^{\rm lin}&= -\mc{L}\overline\kappa^{\rm lin}-\mu\mc{L}z^{\rm lin} - \dot\kappa_0 \\
\dot z^{\rm lin} &=\dot{\overline\kappa}^{\rm lin}-\delta^{-1}z^{\rm lin} \,.
\end{aligned}
\end{equation}
More specifically, defining 
\begin{equation}\label{eq:Ulin}
U^{\rm lin} = -\gamma\int_0^1 (\kappa_0)_s\overline\kappa^{\rm lin}\, ds -\gamma\mu\int_0^1z^{\rm lin}(\overline\kappa^{\rm lin}+\kappa_0)_s \, ds\,,
\end{equation}
we have that 
\begin{align*}
|U-U^{\rm lin}| &\le c\int_0^1 \abs{(\kappa_0)_s}|\overline\kappa-\overline\kappa^{\rm lin}|\, ds +\int_0^1|z-z^{\rm lin}|\big(\abs{\overline\kappa_s}+\abs{(\kappa_0)_s}\big) \, ds \\
&\qquad + \int_0^1|z^{\rm lin}|\,|(\overline\kappa-\overline\kappa^{\rm lin})_s| \, ds \\
&\le c\norm{\kappa_0}_{H^1}\|\overline\kappa-\overline\kappa^{\rm lin}\|_{L^2} +\|z-z^{\rm lin}\|_{L^2}\big(\norm{\overline\kappa}_{H^1}+\norm{\kappa_0}_{H^1}\big) + \|z^{\rm lin}\|_{L^2}\|\overline\kappa-\overline\kappa^{\rm lin}\|_{H^1} \\
&\le c\,\varepsilon^4\,. 
\end{align*}

Therefore, it suffices to use $\overline\kappa^{\rm lin}$ and $z^{\rm lin}$ to compute a more detailed expression for the time-averaged swimming speed $\langle U \rangle$. We being by solving for $\overline\kappa^{\rm lin}$ and $z^{\rm lin}$ in terms of $\kappa_0$. 
Defining $\omega=\frac{2\pi}{T}$, we expand each of $\kappa_0$, $\overline\kappa^{\rm lin}$, and $z^{\rm lin}$ as a Fourier series in time:
\begin{align*}
\kappa_0=\sum_{m=1}^\infty A_m(s)\cos(\omega m \,t)-&B_m(s)\sin(\omega m \,t)\,, \quad 
\overline\kappa^{\rm lin}=\sum_{m=1}^\infty C_m(s)\cos(\omega m \,t)-D_m(s)\sin(\omega m \,t)\,,\\
z^{\rm lin }&=\sum_{m=1}^\infty E_m(s)\cos(\omega m \,t)-F_m(s)\sin(\omega m \,t)\,.
\end{align*}

Using \eqref{eq:kapz_lin}, the coefficients of this expansion then satisfy the following system of equations:
\begin{align*}
-\omega mC_m&= \mc{L}D_m +\mu\mc{L}F_m+\omega m A_m\,, \qquad
-\omega mE_m =-\omega mC_m+\delta^{-1}F_m\,, \\
-\omega mD_m&= -\mc{L}C_m-\mu\mc{L}E_m+\omega mB_m \,, \qquad
-\omega mF_m =-\omega mD_m-\delta^{-1}E_m\,.
\end{align*}

Further expanding each of these coefficients in eigenfunctions \eqref{eq:L_eigs} of the operator $\mc{L}$, i.e.
\begin{align*}
A_m &= \sum_{k=1}^\infty a_{m,k}\psi_k\,,\quad B_m = \sum_{k=1}^\infty b_{m,k}\psi_k \,, \quad
C_m = \sum_{k=1}^\infty c_{m,k}\psi_k\,,\\
D_m &= \sum_{k=1}^\infty d_{m,k}\psi_k \,,\quad E_m=\sum_{k=1}^\infty e_{m,k}\psi_k\,,\quad F_m = \sum_{k=1}^\infty f_{m,k}\psi_k\,,
\end{align*}
we may solve for the coefficients $c_{m,k}$ and $d_{m,k}$ as
\begin{align*}
c_{m,k}&= Q_{m,k}b_{m,k} - H_{m,k}a_{m,k} \,, \qquad 
d_{m,k}= - Q_{m,k}a_{m,k}- H_{m,k}b_{m,k}\,,
\end{align*}
where
\begin{align*}
Q_{m,k}&= \frac{\lambda_k\omega m(1+(1+\mu)(\delta\omega m)^2)}{\lambda_k^2(1+(1+\mu)^2(\delta\omega m)^2)+\omega^2 m^2(2\mu\delta\lambda_k+1+(\delta\omega m)^2)}\,,
\\
H_{m,k}&=\frac{\omega^2 m^2(\mu\delta\lambda_k+ 1+(\delta\omega m)^2)}{\lambda_k^2(1+(1+\mu)^2(\delta\omega m)^2)+\omega^2 m^2(2\mu\delta\lambda_k+1+(\delta\omega m)^2)}\,.
\end{align*}

Additionally we may solve for $e_{m,k}$ and $f_{m,k}$ as
\begin{align*}
e_{m,k} &= \frac{\delta\omega m}{1+(\delta\omega m)^2}\bigg(Q_{m,k}(a_{m,k}+\delta\omega mb_{m,k})
+ H_{m,k}(b_{m,k}-\delta\omega ma_{m,k})\bigg) \\
f_{m,k} &=\frac{\delta\omega m}{1+(\delta\omega m)^2}\bigg(Q_{m,k}(b_{m,k} -\delta\omega ma_{m,k})
+ H_{m,k}(-a_{m,k}-\delta\omega mb_{m,k})\bigg) \,.
\end{align*}

We now need to use the above expansions in the expression \eqref{eq:Ulin} to calculate the average swimming speed $\langle U^{\rm lin}\rangle$. We first calculate
\begin{align*}
-\gamma&\int_0^1 \langle(\kappa_0)_s \overline\kappa^{\rm lin}\rangle \, ds  = 
-\gamma\sum_{m,k,\ell=1}^\infty\bigg(\frac{1}{2}a_{m,\ell}c_{m,k}+\frac{1}{2}b_{m,\ell}d_{m,k} \bigg)\int_0^1\psi_k(\psi_\ell)_s\,ds \\
&= \frac{\gamma}{2}\sum_{m,k,\ell=1}^\infty \bigg(Q_{m,k}(a_{m,k}b_{m,\ell}-a_{m,\ell}b_{m,k})+ H_{m,k}(a_{m,k}a_{m,\ell}+b_{m,k}b_{m,\ell})\bigg) \int_0^1\psi_k(\psi_\ell)_s\,ds \, .
\end{align*}

Furthermore, noting that 
\begin{align*}
(\overline\kappa^{\rm lin}+\kappa_0)_s = \sum_{m,\ell}(\psi_\ell)_s\big((a_{m,\ell}+c_{m,\ell})\cos(\omega mt)-(b_{m,\ell}+d_{m,\ell})\sin(\omega m t)\big)\,,
\end{align*}
we may also calculate
\begin{align*}
-\gamma\mu\int_0^1&\langle z^{\rm lin}(\overline\kappa^{\rm lin}+\kappa_0)_s\rangle \, ds =-\frac{\gamma\mu}{2}\sum_{m,k,\ell}\bigg((a_{m,\ell}+c_{m,\ell})e_{m,k}+ (b_{m,\ell}+d_{m,\ell})f_{m,k}\bigg)\int_0^1\psi_k(\psi_\ell)_s\,ds\\
&=-\frac{\gamma\mu}{2}\sum_{m,k,\ell}\frac{\delta\omega m}{1+(\delta\omega m)^2}\bigg((Q_{m,\ell}b_{m,\ell} +(1- H_{m,\ell})a_{m,\ell})\bigg(Q_{m,k}(a_{m,k}+\delta\omega mb_{m,k})\\
&\qquad 
+ H_{m,k}(b_{m,k}-\delta\omega ma_{m,k})\bigg)
+ ((1- H_{m,\ell})b_{m,\ell}- Q_{m,\ell}a_{m,\ell})\bigg(Q_{m,k}(b_{m,k} -\delta\omega ma_{m,k})\\
&\qquad - H_{m,k}(a_{m,k}+\delta\omega mb_{m,k})\bigg)\bigg)\int_0^1\psi_k(\psi_\ell)_s\,ds\,.
\end{align*}

Rearranging the above expression and combining the two components of $\langle U^{\rm lin}\rangle$, we obtain the form of the swimming speed reported in \eqref{est:swimspeed2} and \eqref{eq:Wexpr}.
\end{proof}

\appendix
\section{Numerical method}\label{app:nummeth}
For the numerical simulations of section \ref{sec:numerics}, we use the formulation introduced in \cite{RFTpaper}, which readily adapts to the viscoelastic setting. The formulation is based on a combination of works by Moreau, et al. \cite{moreau2018asymptotic} and Maxian, et al. \cite{maxian2021integral}. For convenience, we recall the original formulation \eqref{eq:VEoriginal1}-\eqref{eq:BCs_original} of the viscoelastic resistive force theory equations:
\begin{equation}\label{eq:VEoriginal}
\begin{aligned}
\frac{\p\X}{\p t}(s,t) &= -(1+\mu)\big({\bf I}+\gamma\X_s\X_s^{\rm T}\big)\big(\X_{sss}-\tau\X_s-(\kappa_0)_s\be_{\rm n}-\frac{\mu}{1+\mu}\xi_s\be_{\rm n}\big)_s \\
\delta\dot\xi &= -\xi + \kappa - \kappa_0 \\
\abs{\X_s}^2&=1 \\
(\X_{ss}-\kappa_0\be_{\rm n})\big|_{s=0,1}&=0\,, \quad (\X_{sss}-\tau\X_s-(\kappa_0)_s\be_{\rm n})\big|_{s=0,1}=0\,, \quad \xi\big|_{s=0,1}=\xi_s\big|_{s=0,1}=0\,.
\end{aligned}
\end{equation}
The formulation used in numerical simulations will be derived from \eqref{eq:VEoriginal}.
We begin by parameterizing the filament using the tangent angle description \eqref{eq:tangent}. In particular, we write
\begin{equation}\label{eq:param}
\X(s,t)=\X_0(t) + \int_0^s\be_{\rm t}(s',t)ds' \, ,\quad
\be_{\rm t}= \begin{pmatrix}
\cos\theta\\
\sin\theta
\end{pmatrix}
\,, \quad
\be_{\rm n}=\begin{pmatrix}
-\sin\theta\\
\cos\theta
\end{pmatrix}
\end{equation}

As in \cite{RFTpaper,moreau2018asymptotic,maxian2021integral}, we exploit that, due to the inextensibility constraint, only the normal components of the hydrodynamic force along the filament actually contribute to the fiber motion. In particular, using the parameterization \eqref{eq:param}, we may rewrite the first three equations of \eqref{eq:VEoriginal} as a closed system:
\begin{align}
\dot \X_0 +\int_0^s \dot\be_{\rm t}(s')\,ds' &= -(1+\mu)({\bf I}+\gamma\be_{\rm t}\be_{\rm t}^{\rm T})\bm{h}(s) \label{eq:form1} \\
\big({\bf I}-\be_{\rm t}(s)\be_{\rm t}(s)^{\rm T}\big)\int_0^s\bm{h}(s')ds' &= \big({\bf I}-\be_{\rm t}(s)\be_{\rm t}(s)^{\rm T}\big)\bigg(\X_{sss}-(\kappa_0)_s\be_{\rm n}-\frac{\mu}{1+\mu}\xi_s\be_{\rm n}\bigg) \label{eq:form2} \\
\delta\dot\xi &= -\xi + \kappa - \kappa_0\,.
\end{align}
Note that in \eqref{eq:form2}, by projecting away from the tangential direction along the filament, we have eliminated the need to solve for the unknown fiber tension. Instead, inextensibility is enforced directly via the parameterization \eqref{eq:param}. \\

Solving \eqref{eq:form1} directly for $\bm{h}$ and inserting this expression in \eqref{eq:form2}, we obtain the system
\begin{align}
\be_{\rm n}(s,t)\cdot\int_0^s({\bf I}-\frac{\gamma}{1+\gamma}\be_{\rm t}\be_{\rm t}^{\rm T})\bigg(\dot\X_0 +\int_0^{s'} \dot\be_{\rm t}(\bars)\,d\bars\bigg)ds' &= -(1+\mu)\theta_{ss}+(1+\mu)(\kappa_0)_s + \mu\xi_s  \label{eq:nummeth1}\\
\delta\dot\xi &= -\xi + \theta_s - \kappa_0  \label{eq:nummeth2}
\end{align}
for unknowns $\X_0(t)$, $\theta(s,t)$, and $\xi(s,t)$. Equations \eqref{eq:nummeth1} and \eqref{eq:nummeth2} serve as the basis for our numerical method. 
The boundary conditions $(\theta_s-\kappa_0)\big|_{s=0,1}=0$ are enforced directly in the discretization of $\theta_{ss}$ on the right hand side of \eqref{eq:nummeth1}, while $\xi\big|_{s=0,1}=\xi_s\big|_{s=0,1}=0$ is enforced in the discretization of $\xi$ in \eqref{eq:nummeth2}.
 To enforce the boundary condition $(-\theta_{ss}+(\kappa_0)_s)\big|_{s=1}=0$, we will also need to require 
\begin{equation}\label{eq:nummeth3}
\int_0^1({\bf I}-\frac{\gamma}{1+\gamma}\be_{\rm t}\be_{\rm t}^{\rm T})\bigg(\dot\X_0 +\int_0^s \dot\be_{\rm t}(s')\,ds'\bigg) \,ds =0.
\end{equation}
The analogous condition at $s=0$ is then satisfied automatically via the formulation \eqref{eq:nummeth1}.\\

We discretize the arclength coordinate $s\in[0,1]$ into $N+1$ equally spaced points $s_i$, $i=0,\dots,N$ and define $\X_i=\X(s_i)$. We consider the fiber as $N$ straight segments between each $\X_i$, and define $\theta_i$, $i=1,\dots,N$, to be the angle between segment $i$ and the $x$-axis. \\

The evolution equation \eqref{eq:nummeth1} is enforced at the midpoint of each segment $\X_{i-\frac{1}{2}}:=\frac{\X_{i-1}+\X_i}{2}$, $i=1,\dots,N$. In particular, we parameterize the evolution $\dot{\X}_{i-\frac{1}{2}}$ of each fiber segment as
\begin{align*}
\dot{\X}_{i-\frac{1}{2}} = \begin{pmatrix}
\dot x_0\\
\dot y_0
\end{pmatrix} 
+ \frac{1}{2N}\begin{pmatrix}
-\sin\theta_i\\
\cos\theta_i
\end{pmatrix}\dot\theta_i 
+\frac{1}{N}\sum_{k=1}^{i} \begin{pmatrix}
-\sin\theta_k\\
\cos\theta_k
\end{pmatrix}\dot\theta_k\,, \quad i=1,\dots,N\,.
\end{align*}
We also define $\kappa_{0,i}=\kappa_0(s_{i-\frac{1}{2}})$ where $s_{i-\frac{1}{2}}=\frac{s_{i-1}+s_i}{2}$, $i=1,\dots,N$, and we define $\xi_i$ similarly.\\ 

For the middle segments $j=2,\dots,N-1$, we obtain $2(N-2)$ equations from the discretization of \eqref{eq:nummeth1} and \eqref{eq:nummeth2}: 
\begin{align}
\frac{1}{N}\begin{pmatrix}
-\sin\theta_j \\
\cos\theta_j
\end{pmatrix}\cdot\sum_{i=1}^j\bm{M}_{\rm RFT}(\theta_i)\dot{\X}_{i-\frac{1}{2}} &= - N^2(1+\mu)(\theta_{j-1}-2\theta_j+\theta_{j+1}) + (1+\mu)(\kappa_0)_{s,j} \label{eq:disc1}\\
&\qquad +\mu \frac{N}{2}(\xi_{j+1}-\xi_{j-1}) \,, \quad j=2,\dots,N-1   \nonumber\\
\delta\dot\xi_j &= -\xi_j+ 2N(\theta_{j+1}-\theta_{j-1})-\kappa_{0,j} \,, \quad j=2,\dots,N-1  \label{eq:disc2}\,.
\end{align}
Here the $2N\times2N$ matrix $\bm{M}_{\rm RFT}(\theta_i)$ is given by 
\begin{align*}
\bm{M}_{\rm RFT}(\theta_i) = 
\begin{pmatrix}
1-\frac{\gamma}{1+\gamma}\cos^2\theta_i & -\frac{\gamma}{1+\gamma}\cos\theta_i\sin\theta_i \\
-\frac{\gamma}{1+\gamma}\cos\theta_i\sin\theta_i & 1-\frac{\gamma}{1+\gamma}\sin^2\theta_i
\end{pmatrix}\,.
\end{align*}

At the fiber endpoints, we set $\xi_1=\xi_N=0$ and enforce the boundary conditions $(-\theta_s+\kappa_0)_s\big|_{s=0,1}=0$ via the following two equations:
\begin{align}
\frac{1}{N}\begin{pmatrix}
-\sin\theta_1 \\
\cos\theta_1
\end{pmatrix}\cdot\bm{M}_{\rm RFT}(\theta_1)\dot{\X}_{\frac{1}{2}} &= -N^2(2\theta_2 -2\theta_1) + 2N\kappa_{0,1}    \label{eq:endpt0}\\
\frac{1}{N}\sum_{i=1}^{N-1}\bm{M}_{\rm RFT}(\theta_i)\dot{\X}_{i-\frac{1}{2}} &=0 \,,  \label{eq:endpt1}
\end{align}

To enforce $(\theta_s-\kappa_0)\big|_{s=0,1}=0$, we discretize $\theta_{ss}$ near the fiber endpoints as 
\begin{equation}\label{eq:theta_ss}
\theta_{ss}\big|_{s=0}\approx N^2(2\theta_2 -2\theta_1) - 2N\kappa_{0,1}\,,\qquad 
\theta_{ss}\big|_{s=1}\approx N^2(2\theta_{N-1} -2\theta_N) + 2N\kappa_{0,N}\, .
\end{equation}

At the $s=1$ endpoint, equations \eqref{eq:nummeth1} and \eqref{eq:nummeth3} coincide to give the boundary condition $\theta_{ss}\big|_{s=1}=(\kappa_0)_s\big|_{s=1}$, which, using \eqref{eq:theta_ss}, becomes an equation for $\theta_N$: 
\begin{equation}\label{eq:thetaN}
\theta_N = \theta_{N-1}+ \frac{1}{N}\kappa_{0,N} - \frac{1}{2N^2}(\kappa_0)_{s,N} \,.  
\end{equation}

Counting equations, we have $2(N-2)$ equations from \eqref{eq:disc1} and \eqref{eq:disc2}, 1 equation from \eqref{eq:endpt0}, 2 equations from \eqref{eq:endpt1}, and 1 equation from \eqref{eq:thetaN} for a total of $2N$ equations. These uniquely determine the $2N$ unknowns $x_0$, $y_0$, $\theta_1$, \dots, $\theta_N$, $\xi_2$, \dots, $\xi_{N-1}$. \\

The equations \eqref{eq:disc1}-\eqref{eq:thetaN} are evolved in time using a built-in ODE solver in Matlab.\\

\vspace{0.5cm}
{\bf Acknowledgments.} 
L.O. acknowledges support from NSF Postdoctoral Fellowship DMS-2001959 and thanks Becca Thomases, Yoichiro Mori, and Dallas Albritton for helpful discussion. \\

{\bf Declarations.}
Data sharing not applicable to this article as no datasets were generated or analysed during the current study. The author declares no conflict of interest.


\bibliographystyle{abbrv} 
\bibliography{VE_RFTbib}

\end{document}